\begin{document}


\newtheorem{Theorem}{Theorem}[section]
\newtheorem{Lemma}[Theorem]{Lemma}
\newtheorem{Characterization}[Theorem]{Characterization}
\newtheorem{Proposition}[Theorem]{Proposition}
\newtheorem{Property}[Theorem]{Property}
\newtheorem{Problem}[Theorem]{Problem}
\newtheorem{Example}[Theorem]{Example}
\newtheorem{Remark}[Theorem]{Remark}
\newtheorem{Corollary}[Theorem]{Corollary}
\newtheorem{Definition}[Theorem]{Definition}

%
\newcommand{\bC}{{\mathbb{C}}}
\newcommand{\bD}{{\mathbb{D}}}
\newcommand{\bN}{{\mathbb{N}}}
\newcommand{\bQ}{{\mathbb{Q}}}
\newcommand{\bR}{{\mathbb{R}}}
\newcommand{\bT}{{\mathbb{T}}}
\newcommand{\bX}{{\mathbb{X}}}
\newcommand{\bZ}{{\mathbb{Z}}}
\newcommand{\bH}{{\mathbb{H}}}
%
  \newcommand{\A}{{\mathcal{A}}}
  \newcommand{\B}{{\mathcal{B}}}
  \newcommand{\C}{{\mathcal{C}}}
  \newcommand{\D}{{\mathcal{D}}}
  \newcommand{\E}{{\mathcal{E}}}
  \newcommand{\F}{{\mathcal{F}}}
  \newcommand{\G}{{\mathcal{G}}}
\renewcommand{\H}{{\mathcal{H}}}
  \newcommand{\I}{{\mathcal{I}}}
  \newcommand{\J}{{\mathcal{J}}}
  \newcommand{\K}{{\mathcal{K}}}
\renewcommand{\L}{{\mathcal{L}}}
  \newcommand{\M}{{\mathcal{M}}}
  \newcommand{\N}{{\mathcal{N}}}
\renewcommand{\O}{{\mathcal{O}}}
\renewcommand{\P}{{\mathcal{P}}}
  \newcommand{\Q}{{\mathcal{Q}}}
  \newcommand{\R}{{\mathcal{R}}}
  \newcommand{\T}{{\mathcal{T}}}
  \newcommand{\U}{{\mathcal{U}}}
  \newcommand{\V}{{\mathcal{V}}}
  \newcommand{\W}{{\mathcal{W}}}
  \newcommand{\X}{{\mathcal{X}}}
  \newcommand{\Y}{{\mathcal{Y}}}
  \newcommand{\Z}{{\mathcal{Z}}}
%
\newcommand{\fA}{{\mathfrak{A}}}
\newcommand{\fB}{{\mathfrak{B}}}
\newcommand{\fC}{{\mathfrak{C}}}
\newcommand{\fD}{{\mathfrak{D}}}
\newcommand{\fE}{{\mathfrak{E}}}
\newcommand{\fF}{{\mathfrak{F}}}
\newcommand{\fG}{{\mathfrak{G}}}
\newcommand{\fH}{{\mathfrak{H}}}
\newcommand{\fI}{{\mathfrak{I}}}
\newcommand{\fJ}{{\mathfrak{J}}}
\newcommand{\fK}{{\mathfrak{K}}}
\newcommand{\fL}{{\mathfrak{L}}}
\newcommand{\fM}{{\mathfrak{M}}}
\newcommand{\fN}{{\mathfrak{N}}}
\newcommand{\fO}{{\mathfrak{O}}}
\newcommand{\fP}{{\mathfrak{P}}}
\newcommand{\fQ}{{\mathfrak{Q}}}
\newcommand{\fR}{{\mathfrak{R}}}
\newcommand{\fS}{{\mathfrak{S}}}
\newcommand{\fT}{{\mathfrak{T}}}
\newcommand{\fU}{{\mathfrak{U}}}
\newcommand{\fV}{{\mathfrak{V}}}
\newcommand{\fW}{{\mathfrak{W}}}
\newcommand{\fX}{{\mathfrak{X}}}
\newcommand{\fY}{{\mathfrak{Y}}}
\newcommand{\fZ}{{\mathfrak{Z}}}
\newcommand{\ul}{\underline  }
\newcommand{\Aut}{\operatorname{Aut}}
\newcommand{\sgn}{\operatorname{sgn}}
\newcommand{\rank}{\operatorname{rank}}
\newcommand{\adj}{\operatorname{adj}}
\newcommand{\ran}{\operatorname{ran}}
\newcommand{\supp}{\operatorname{supp}}
\newcommand{\conv}{\operatorname{conv}}
\newcommand{\cone}{\operatorname{cone}}
\newcommand{\vspan}{\operatorname{span}}
\newcommand{\proj}{\operatorname{proj}}
\newcommand{\Isom}{\operatorname{Isom}}
\newcommand{\qIsom}{\operatorname{q-Isom}}
\newcommand{\Cknet}{{\mathcal{C}_{\text{knet}}}}
\newcommand{\Ckag}{{\mathcal{C}_{\text{kag}}}}
\newcommand{\rind}{\operatorname{r-ind}}
\newcommand{\lind}{\operatorname{r-ind}}
\newcommand{\ind}{\operatorname{ind}}
\newcommand{\coker}{\operatorname{coker}}
\newcommand{\Hom}{\operatorname{Hom}}
\newcommand{\GL}{\operatorname{GL}}
\newcommand{\tr}{\operatorname{tr}}
\newcommand{\Real}{\operatorname{Re}}
\newcommand{\Imag}{\operatorname{Im}}

 \title[The Rigid Unit Mode spectrum for symmetric frameworks]{The Rigid Unit Mode spectrum for symmetric frameworks}

\author[E. Kastis]{Eleftherios Kastis}
\email{l.kastis@lancaster.ac.uk}
\address{Dept.\ Math.\ Stats.\\ Lancaster University\\
Lancaster LA1 4YF \\U.K. }

\author[D. Kitson]{Derek Kitson}
\email{derek.kitson@mic.ul.ie}
\address{Dept.\ Math.\ Comp. St.\\Mary Immaculate College,  Ireland.}

\subjclass{52C25, 47B91, 47A56, 43A60}

\begin{abstract} 
We establish several fundamental properties of the Rigid Unit Mode (RUM) spectrum for symmetric frameworks with a discrete abelian symmetry group and arbitrary linear constraints. In particular, we identify a nonempty subset of the RUM spectrum which derives from the joint eigenvalues of generators for the linear part of the symmetry group. These joint eigenvalues give rise to $\chi$-symmetric flexes which span the space of translations for the framework. 
We show that the RUM spectrum is a union of Bohr-Fourier spectra arising from twisted almost-periodic flexes of the framework. We also characterise frameworks for which every almost periodic flex is a translation. 
\end{abstract}

\maketitle

\section{Introduction}
The Rigid Unit Mode (RUM) model, developed by condensed matter physicists in the 1990's, is used to detect  phase-periodic flexibility in network materials (see the recent survey article \cite{dove} and references therein). 
In recent years, the mathematical underpinnings of the RUM model have been studied from the perspective of operator theory beginning with \cite{owe-pow-crystal, pow-seville, pow-poly}. This theory formalised the connection between three essential tools for analyzing infinitesimal flexes of a crystallographic (i.e.~periodic) bar-and-joint framework $\mathcal{C}$ in $d$-dimensional Euclidean space: the infinite rigidity matrix $R(\mathcal{C})$, the symbol function (or orbit matrices) $\Phi_{\C}(\omega)$ and the RUM spectrum $\Omega(\C)$. Briefly, the rigidity matrix $R(\mathcal{C})$ represents a homogeneous system of linear constraints derived from edge-length preserving motions of the framework and has as its kernel the space of all infinitesimal (i.e.~first-order) flexes of the framework. The symbol function is a matrix-valued function defined on the $d$-torus $\mathbb{T}^d$ with non-zero vectors in the kernel   of $\Phi_\C(\omega)$ corresponding to $\omega$-phase-periodic flexes of the framework $\C$. The RUM spectrum $\Omega(\C)$ is a non-empty subset of $\mathbb{T}^d$ consisting of all phases $\omega$ for which $\omega$-phase-periodic flexes of $\C$ exist.  
In \cite{kkm}, the authors developed a more general theory for symmetric frameworks $\G$ with a discrete abelian symmetry group $\Gamma$ and arbitrary linear constraints. Here the rigidity matrix is replaced by the {\em coboundary matrix} $C(\G)$ which in turn gives rise to a bounded operator $\tilde{C}(\G)$, referred to in this article as the {\em gain framework operator}. The role of the $d$-torus $\mathbb{T}^d$ is played by the dual group $\hat{\Gamma}$ (note that for periodic frameworks $\mathbb{T}^d$ is the Pontryagin dual of the symmetry group $\mathbb{Z}^d$) and phase-periodic infinitesimal flexes are replaced with  {\em $\chi$-symmetric flexes}, where $\chi\in\hat{\Gamma}$. Adapting the well-known result that intertwiners for the bilateral shift on $\ell^2(\bZ)$
are unitarily equivalent to multiplication operators on $L^2(\bT)$, it was shown in \cite{kkm} that the multiplication operator $M_{\Phi_{\G}}$ associated to the symbol function $\Phi_\G$ is related to the gain framework operator $\tilde{C}(\G)$ by a factorisation,
\[M_{\Phi_{\G}}= F_{Y^{E_0}}\circ \tilde{C}(\G) \circ F^{-1}_{X^{V_0}} \circ T_{\tilde{\tau}} ,\]
where $F_{X^{V_0}}$ and $F_{Y^{E_0}}$ are  Fourier transforms  and $T_{\tilde{\tau}}$ is an isometry referred to as a {\em twist} (see Section \ref{s:reps} for the definition). 

In this article, we continue the study of the RUM spectrum for symmetric frameworks but take a more direct approach; working with finite gain frameworks $\G_0$ from the outset rather than the possibly infinite covering frameworks $\G$. 
In Section \ref{s:gain}, we introduce the notion of a {\em gain framework} $\G_0$, its associated {\em gain framework operator} $\tilde{C}(\G_0)$ and {\em orbit matrices} $O_{\G_0}(\chi)$, and the {\em RUM spectrum} $\Omega(\G_0)$. 
We show that the RUM spectrum $\Omega(\G_0)$ always contains a non-empty set $\Omega_{js}(\tau)$ which is inherited from the symmetry group and completely independent of the linear constraints governing the gain framework $\G_0$. The elements of $\Omega_{js}(\tau)$  can be computed from the joint eigenvalues of any generating $n$-tuple for the gain framework's associated point group $d\tau(\Gamma)$ (defined in Section \ref{s:reps}). For this reason we refer to points in $\Omega_{js}(\tau)$ as {\em  joint spectral points}. We show that the joint spectral points for a gain framework $\G_0$ give rise to infinitesimal flexes which generate the space of all translational motions of the covering framework $\G$ (Theorems \ref{t:eigen} and \ref{t:transpace}). 

A second objective of this article is to develop a theory of {\em almost periodic flexibility} for gain frameworks which extends the existing theory for periodic bar-and-joint frameworks developed in \cite{bkp}. 
In Section \ref{s:ap}, we review some necessary theory of scalar-valued almost periodic functions and the extension of this theory to vector-valued functions. For further details on the theory of almost periodic functions on locally compact abelian groups we refer the reader to \cite{bks,rud, shu}.  We introduce the notion of a {\em twisted} almost periodic function and prove a key lemma on intertwining operators (Lemma \ref{L3}).
We then apply this theory to show that the RUM spectrum of a gain framework can be expressed as the union of the Bohr-Fourier spectra of an associated collection of almost periodic functions (Theorem \ref{t:bohr}).
We also characterise gain frameworks which admit only translational almost periodic flexes in terms of the RUM spectrum (Theorem \ref{t:apr}). 

A gain framework captures many of the essential features of a symmetric system of linear constraints and this is demonstrated in Sections \ref{s:applications} and \ref{s:FurtherExamples}.
In Section \ref{s:applications}, the process of constructing a gain framework from a symmetric bar-and-joint framework is explained and  examples of  finite $3$-dimensional bar-and-joint frameworks  with $C_{3h}$-symmetry are presented. 
We demonstrate how to compute the RUM spectrum $\Omega(\G_0)$ and joint spectral points $\Omega_{js}(\tau)$ and how to construct  $\chi$-symmetric flexes for a gain framework $\G_0$ with respect to different choices of linear constraints.
In Section \ref{s:FurtherExamples}, we demonstrate the versatility of the theory with a novel example of an infinite bar-and-joint framework which is not discrete and has irrational rotational symmetry.   We also present an example of an infinite bar-and-joint framework with a finite dimensional space of twisted almost periodic flexes.


\subsection{Historical background}
The origins  of {\em graph rigidity} (also known as {\em combinatorial rigidity}, {\em geometric rigidity}, or {\em structural rigidity}) trace back to   Cauchy's rigidity theorem  (1813)  which, stated informally, says that every convex polyhedron in 3-dimensional Euclidean space is a (continuously) rigid structure (\cite{cau}). 
The various proofs of this famous result demonstrate a striking interplay between geometry and combinatorics which is characteristic of the field today (see for example \cite[Ch.~14]{aig-zie}).
Dehn's proof of Cauchy's rigidity theorem for simplicial polyhedra (\cite{dehn}) and Alexandrov's treatise on convex polytopes (\cite{alex}) developed the now standard methods of {\em infinitesimal rigidity} (see also \cite{asi-rot,asi-rot2,glu}).   
The first examples of flexible polyhedra are due to R.~Bricard (\cite{bri}). However, for these polyhedra to flex the faces of the polyhedra are required to pass through each other and so these examples are not constructible in the real world.
R.~Connelly constructed the first example of a flexible polyhedron which is physically realisable (\cite{con}). This gave rise to the Bellow's Conjecture (since proved) that a continuous flex of a polyhedron preserves volume. The construction of new examples of flexible polyhedra is an ongoing endeavour in mathematics and structural engineering (\cite{ltg}).

In 1864, J.~C.~Maxwell obtained another seminal result in graph rigidity. Suppose we build a $3$-dimensional structure by connecting bars of fixed lengths together where the bars are connected end-to-end and the connecting joints are fully rotational. If the resulting structure is rigid then the number of bars and joints will almost always obey a simple counting rule: $b\geq 3j-6$ where $b$ is the number of bars and $j$ is the number of joints (\cite{max}). The analogous statement for $2$-dimensional structures requires the counting rule $b\geq 2j-3$. What is noteworthy here is that Maxwell's criteria are purely combinatorial, they do not depend on the geometry of the framework. 
In 1923, H.~Pollaczek-Geiringer discovered a counting rule which, together with Maxwell's criteria, provides a complete combinatorial characterisation of (generic) minimal rigidity for bar-and-joint frameworks in the Euclidean plane. 
Roughly speaking, the theorem says that for almost all 2-dimensional bar-and-joint frameworks, minimal rigidity is satisfied if and only if $b=2j-3$ and each subframework obtained by throwing away some of the bars and joints satisfies $b'\leq2j'-3$ where $b'$ is the number of bars and $j'$ is the number of joints in the subframework (\cite{pol}).  This landmark result was 
not widely-known at first but eventually gained prominence when it was rediscovered independently in 1970 by G.~Laman (\cite{Lam}).

In general, the rigidity of a bar-and-joint framework can be tested by computing the rank of a matrix, known as the {\em rigidity matrix}, however this is a computationally slow procedure.
Using the Geiringer-Laman theorem, Lov\'{a}sz and Yemini in 1982  
and Crapo  in 1990 successfully obtained new combinatorial characterisations which gave rise to fast polynomial-time algorithms for testing rigidity in the plane (\cite{crapo,lov-yem}). In 1997, Jacobs and Hendrickson introduced the now standard {\em pebble game} algorithm widely used in software packages for rigidity analysis (\cite{jac-hen}). In 2011, Streinu launched KINARI (KINematics And RIgidity) software for analyzing rigidity properties of structures recorded in the Crystallography Open Database (COD) and Protein Data Bank (PDB) (\cite{kinari}).
Despite these advances, an efficient algorithm for testing the rigidity of generic $3$-dimensional bar-and-joint frameworks is currently unavailable. This is generally regarded as the most important and difficult open problem in the field. 

In 1979, J.~C.~Phillips introduced constraint counting arguments analogous to Maxwell's criteria to explain phase transition phenomena in non-crystalline network materials (\cite{phillips}). In this setting, a bond between two atoms can be viewed as a bar and each atom as a joint. 
An atom with $r$ neighbours is attributed 3 degrees of freedom, $\frac{r}{2}$ bond constraints and $2r-3$ angular constraints. If on average each atom has exactly $2.4$ neighbours then a simple counting argument shows the network lies on the boundary between being underconstrained and overconstrained. In the 1990's, experimentalists in materials science were discovering large numbers of displacive phase transitions in aluminosilicates. Giddy et al.~(\cite{gid-et-al}) explained this unexpected phenomonen by observing that the tetrahedral network structures of aluminosilicates have the exact 2.4 averaging property noted by Phillips. This work motivated the development of the Rigid Unit Mode (RUM) model for network structures (see for example \cite{dove,weg}) and the companion program CRUSH for computing the RUM spectrum (\cite{ham-dov-gid-hei}). 

One of the main reasons research in graph rigidity has intensified in the last 25 years is that new technologies have emerged in which there are fundamental problems that are solvable using graph rigidity. For example, in 2002, Eren et al.~applied techniques from graph rigidity to develop an efficient strategy for maintaining formations of mobile autonomous vehicles (\cite{ebam}).  In 2006, Aspnes et al.~used graph rigidity to develop the theoretical foundations of network localisation for wireless sensor networks and to develop strategies for constructing networks for which the network localisation problem is solvable  (\cite{asp}). In 2009, Krick et al.~presented a decentralized gradient control law to stabilize a network of autonomous mobile robots to any  infinitesimally rigid target formation (\cite{kbf}). In 2010, Singer and Cucuringu demonstrated how graph rigidity can be adapted to solve matrix completion problems which are applicable to computer vision, data analysis, machine learning and collaborative filtering (\cite{sc}). In 2013, Zelazo et al.~ presented a distributed control law for maintaining a rigid formation of autonomous robots which allows for connection and disconnection of links between pairs of robots (\cite{zelazo}). In recent work, alternative distributed control laws for rigidity maintenance have been derived in the context of $\ell_1$ distance constraints (\cite{bcs}) and by considering subframeworks (\cite{pamg}). 
In addition to these application areas, graph rigidity has inspired novel solutions to deep problems in pure mathematics such as  Kalai's proof of the Lower Bound Theorem (\cite{kal}) and Fedorchuk and  Pak's proof of the Robbin's Conjecture (\cite{fed-pak}).


\section{Gain frameworks and the RUM spectrum}
\label{s:gain}
Throughout this section, $X$ and $Y$ denote  finite dimensional complex Hilbert spaces and $\Gamma$ denotes a finitely generated additive discrete abelian group.  The group of affine isometries on $X$ is denoted $\Isom(X)$ and the group of linear isometries on $X$ is denoted $U(X)$. The Banach space of bounded functions $f:\Gamma\to X$ with the supremum norm is denoted $\ell^\infty(\Gamma,X)$.
The set of linear transformations from $X$ to $Y$ is denoted $B(X,Y)$.  The Banach algebra of bounded linear transformations on a Banach space $Z$ with the operator norm is denoted $B(Z)$. The identity element in $B(Z)$ is denoted $I$.


\subsection{Group representations}
\label{s:reps}
Let $\tau:\Gamma\to \Isom(X)$  be a group homomorphism.
For each $\gamma\in\Gamma$, let $d\tau(\gamma)$ denote the linear part of the affine isometry $\tau(\gamma)$.  

\begin{Lemma}
\label{l:dt}
$d\tau:\Gamma\to U(X)$ is a unitary representation.   
\end{Lemma}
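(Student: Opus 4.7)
The plan is to verify the two defining properties of a unitary representation: first, that $d\tau(\gamma)$ is a unitary operator for each $\gamma\in\Gamma$, and second, that $d\tau$ is a group homomorphism into $U(X)$.

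For the first point, I would recall that any affine isometry $T\in\Isom(X)$ admits the canonical decomposition $T(x)=Ax+b$, where $b=T(0)$ and $A=dT$ is the linear part. A short computation using the isometry identity $\|T(x)-T(y)\|=\|x-y\|$ shows that $A$ is itself a linear isometry, and since $X$ is finite dimensional, any linear isometry is unitary. Thus $d\tau(\gamma)\in U(X)$ for every $\gamma\in\Gamma$.

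For the homomorphism property, I would write $\tau(\gamma_i)(x)=A_ix+b_i$ for $i=1,2$ and compose:
\[
\tau(\gamma_1)\bigl(\tau(\gamma_2)(x)\bigr)=A_1(A_2x+b_2)+b_1=(A_1A_2)x+(A_1b_2+b_1).
\]
Since $\tau$ is a homomorphism, $\tau(\gamma_1+\gamma_2)=\tau(\gamma_1)\tau(\gamma_2)$, and comparing linear parts yields $d\tau(\gamma_1+\gamma_2)=A_1A_2=d\tau(\gamma_1)d\tau(\gamma_2)$. Noting also that $\tau(0)=\mathrm{id}_X$ forces $d\tau(0)=I$, we get that $d\tau:\Gamma\to U(X)$ is a homomorphism into the unitary group.

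There is no real obstacle here; the only thing worth being careful about is the well-definedness and linearity of the "linear part" map $d:\Isom(X)\to U(X)$, which relies on the uniqueness of the decomposition $T(x)=Ax+b$ with $A$ linear and $b\in X$. Once this is fixed at the level of $\Isom(X)$, the statement for $d\tau$ is immediate by composition with $\tau$.
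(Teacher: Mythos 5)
Your proposal is correct and follows essentially the same route as the paper: decompose each affine isometry as linear part plus translation by the image of $0$, compose two such maps, and compare linear parts to get $d\tau(\gamma_1+\gamma_2)=d\tau(\gamma_1)d\tau(\gamma_2)$. The extra remarks on unitarity of the linear part and uniqueness of the decomposition are exactly the standard facts the paper leaves implicit when it says it suffices to check the homomorphism property.
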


\proof
It suffices to show that $d\tau$ is a group homomorphism. Let $\gamma_1,\gamma_2\in\Gamma$ and let $x\in X$. Then,
\begin{eqnarray*}
\tau(\gamma_1\gamma_2)x
&=& \tau(\gamma_1)\tau(\gamma_2)x \\
&=& d\tau(\gamma_1)(\tau(\gamma_2)x)+\tau(\gamma_1)0\\
&=& d\tau(\gamma_1)(d\tau(\gamma_2)x+\tau(\gamma_2)0)+\tau(\gamma_1)0 \\
&=& d\tau(\gamma_1)d\tau(\gamma_2)x+\tau(\gamma_1\gamma_2)0  
\end{eqnarray*}
Thus, $d\tau(\gamma_1\gamma_2) = d\tau(\gamma_1)d\tau(\gamma_2)$.
\endproof

Let $S$ be a finite set. Denote by $X^S$ the  direct sum Hilbert space with inner product $\langle x,y\rangle = \sum_{s\in S}\langle x_s,y_s\rangle$ and by $\tilde{\tau}:\Gamma\to U(X^S)$ the unitary representation, 
$$\tilde{\tau}(\gamma)(x) = (d\tau(\gamma)x_{s})_{s\in S},\quad \forall\, x=(x_{s})_{s\in S}\in X^{S}.$$
Associated with  $\tilde{\tau}$ is an isometric isomorphism $T_{\tilde{\tau}}\in B(\ell^\infty(\Gamma,X^{S}))$ where, for each $f\in \ell^\infty(\Gamma,X^{S})$,
\[T_{\tilde{\tau}}(f)(\gamma) = \tilde{\tau}(\gamma)f(\gamma).\]
Define a pair of representations $\pi_{X^S}:\Gamma\to B(\ell^\infty(\Gamma,X^S))$   
where,
\[(\pi_{X^S}(\gamma)f)(\gamma')= f(\gamma'-\gamma),\quad \forall\,f\in \ell^\infty(\Gamma,X^S),\]
and  $\pi_{X^{S},\tilde{\tau}}:\Gamma\to B(\ell^\infty(\Gamma,X^{S}))$ where,
\[\pi_{X^{S},\tilde{\tau}}(\gamma) = T_{\tilde{\tau}}\circ \pi_{X^{S}}(\gamma)\circ T_{\tilde{\tau}}^{-1}.\]

\begin{Remark}
   The abelian group $\Gamma$ is assumed throughout to be finitely generated 
 and discrete. It is automatic that the group $\tau(\Gamma)$ is also finitely generated. However, we do not require the group $\tau(\Gamma)$ to be discrete. See Section \ref{s:infinityh} for an example.
\end{Remark}


\subsection{Joint spectral points}
Let $T=(T_1,\ldots,T_n)\in B(X)^n$ be an $n$-tuple of pairwise commuting linear transformations on $X$. 
A {\em joint eigenvalue} for $T$ is a point $\lambda=(\lambda_1,\ldots,\lambda_n)\in\mathbb{C}^n$ such that 
there exists a non-zero vector $a\in X$ with $T_ja=\lambda_ja$ for each $j=1,\ldots,n$. The vector $a$ is called a {\em joint eigenvector} for $\lambda$. 
The set of all joint eigenvalues for $T$ is denoted $\sigma(T)$ and referred to as the {\em joint spectrum} of the   $n$-tuple  $T=(T_1,\ldots,T_n)$. The spectrum of a single linear transformation $T\in B(X)$ is similarly denoted $\sigma(T)$. 

The following facts are well-known. 

\begin{Lemma}
\label{l:js}
    If $T=(T_1,\ldots,T_n)\in B(X)^n$ is an $n$-tuple of pairwise commuting linear transformations on  $X$ then:
    \begin{enumerate}[(i)]
        \item $\sigma(T)\not=\emptyset$.
        \item $\sigma(T)\subseteq \sigma(T_1)\times\cdots\times \sigma(T_n)$.
        \item $\sigma(p(T))=p(\sigma(T))$ for all complex polynomial mappings $p:\mathbb{C}^n\to\mathbb{C}^m$.  
    \end{enumerate}
\end{Lemma}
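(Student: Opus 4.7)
The three assertions should be tackled in order, with part (i) being the substantive one and the other two reducing to it.

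For (i), the plan is induction on $n$. The base case $n=1$ is the standard existence of an eigenvalue for a linear operator on a nonzero finite-dimensional complex Hilbert space. For the inductive step, pick a joint eigenvalue $(\lambda_1,\ldots,\lambda_{n-1}) \in \sigma(T_1,\ldots,T_{n-1})$ via the induction hypothesis, and set $E = \bigcap_{j=1}^{n-1}\ker(T_j - \lambda_j I)$, which is a nonzero subspace of $X$. Since $T_n$ commutes with each $T_j$, the subspace $E$ is $T_n$-invariant, and applying the base case to $T_n|_E$ produces an eigenvector $a\in E$ of $T_n$ with some eigenvalue $\lambda_n$. Then $a$ is a joint eigenvector for $(\lambda_1,\ldots,\lambda_n)$.

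Part (ii) is immediate from the definition: if $a\ne 0$ is a joint eigenvector associated with $\lambda \in \sigma(T)$, then $T_j a = \lambda_j a$ witnesses $\lambda_j \in \sigma(T_j)$ for every $j$.

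For part (iii), the inclusion $p(\sigma(T))\subseteq \sigma(p(T))$ is direct: if $a$ is a joint eigenvector for $\lambda \in \sigma(T)$, then $p_k(T)a = p_k(\lambda)a$ for every component polynomial $p_k$, so $a$ is a joint eigenvector for $p(\lambda)$ under the commuting tuple $p(T) = (p_1(T),\ldots,p_m(T))$. For the reverse inclusion, given $\mu \in \sigma(p(T))$, consider the nonzero joint eigenspace $W = \{x\in X : p_k(T)x = \mu_k x \text{ for all } k\}$. Since each $T_j$ commutes with every $p_k(T)$, the subspace $W$ is invariant under each $T_j$, so applying (i) to the commuting tuple $(T_1|_W,\ldots,T_n|_W)$ yields some $\lambda \in \sigma(T|_W)\subseteq \sigma(T)$ with joint eigenvector $a \in W$. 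Membership of $a$ in $W$ then forces $p_k(\lambda)a = p_k(T)a = \mu_k a$ for each $k$, i.e.\ $p(\lambda) = \mu$.

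The only real obstacle is (i); the finite-dimensional hypothesis on $X$ is essential throughout, as it is what guarantees that the auxiliary restrictions $T_n|_E$ and $T_j|_W$ themselves admit eigenvalues at each stage of the inductive and reduction arguments. Since the authors describe these facts as \emph{well-known}, I anticipate they will be content with a brief reference or a sketch along these lines.
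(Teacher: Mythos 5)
Your argument is correct. Parts (i) and (ii) follow the paper exactly: the same induction producing the nested joint eigenspaces $\bigcap_j \ker(T_j-\lambda_j I)$, using commutativity for invariance and finite-dimensionality for the existence of an eigenvalue at each stage, with (ii) read off from the definition. Where you genuinely diverge is part (iii): the paper simply invokes the spectral mapping theorem for joint spectra, citing M\"uller's book (Theorem I.2.20 there), whereas you give a self-contained proof. Your forward inclusion $p(\sigma(T))\subseteq\sigma(p(T))$ by evaluating the component polynomials on a joint eigenvector is fine, and your reverse inclusion is a nice reuse of (i): restricting the commuting tuple $T$ to the nonzero joint eigenspace $W$ of $p(T)$ for $\mu$, which is $T_j$-invariant since each $T_j$ commutes with each $p_k(T)$, and then comparing $p_k(T)a=\mu_k a$ with $p_k(T)a=p_k(\lambda)a$ for the resulting joint eigenvector $a\in W$. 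The trade-off is the expected one: your route is elementary and keeps the lemma self-contained, but it relies on the finite-dimensional identification of the joint spectrum with the joint point spectrum; the paper's citation is shorter and appeals to a result valid in the general Banach-algebra setting, which is also the framework the authors use immediately afterwards (maximal ideal space, the functionals $\psi_\lambda$). Either justification is acceptable here.
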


\proof
$(i)$  Since $X$ is finite dimensional there exists an eigenvalue $\lambda_1\in\sigma(T_1)$. Suppose there exists a joint eigenvalue $(\lambda_1,\ldots,\lambda_{m-1})\in \sigma(T_1,\ldots,T_{m-1})$ for some $2\leq m<n$.  
Define $Y_{m-1}:=\cap_{j=1}^{m-1} \ker(T_j-\lambda_jI)$ and note that $Y_{m-1}$ is a non-zero  subspace of $X$.
Moreover, the commutativity of the operators $T_1,\ldots,T_m$ guarantees that $Y_{m-1}$ is an invariant subspace for $T_m$. Thus there exists an eigenvalue $\lambda_m\in \sigma(T_m)$ with eigenvector in $Y_{m-1}$.
It follows that $Y_{m}:=\cap_{j=1}^{m} \ker(T_j-\lambda_jI)$ is non-zero and so $(\lambda_1,\ldots,\lambda_{m})$ is a joint eigenvalue for $(T_1,\ldots,T_{m})$. The result follows. Statement $(ii)$ is clear. For statement $(iii)$ see \cite[Theorem I.2.20]{muller}.
\endproof

Let $\mathcal{A}$ be the commutative unital Banach algebra generated by the pairwise commuting operators $T_1,\ldots,T_n$ in $B(X)$. 
The set $\mathcal{M}_\mathcal{A}$ of all non-zero multiplicative linear functionals $\psi:\mathcal{A}\to\mathbb{C}$ is known as the {\em maximal ideal space} of $\mathcal{A}$.
The joint spectrum of $T$ satisfies,
$$\sigma(T) = \{(\psi(T_1),\ldots,\psi(T_n)):\psi\in\mathcal{M}_\mathcal{A}\}$$
(see \cite[Section I.2]{muller} for example). Given a joint eigenvalue $\lambda\in \sigma(T)$, denote by $\psi_\lambda$ the unique element of $\mathcal{M}_\mathcal{A}$ such that 
$\lambda=(\psi_\lambda(T_1),\ldots,\psi_\lambda(T_n))$.

\begin{Lemma}\label{lem:char}
Let $\tau:\Gamma\to \Isom(X)$  be a group homomorphism and let
$d\tau(\gamma_1),\ldots,d\tau(\gamma_n)$ be a generating set for the group $d\tau(\Gamma)$.
Let $T=(d\tau(\gamma_1),\ldots,d\tau(\gamma_n))\in B(X)^n$
and let $\lambda\in\sigma(T)$.
Then the composition, $$
\chi_\lambda := \psi_\lambda\circ d\tau,$$
lies in the dual group $\hat{\Gamma}$.
\end{Lemma}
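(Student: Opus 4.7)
The plan is to verify three things about $\chi_\lambda$: (a) it is well-defined on all of $\Gamma$, (b) it is a group homomorphism into $\mathbb{C}^*$, and (c) its image lies in the unit circle $\mathbb{T}$. Continuity is automatic because $\Gamma$ is discrete, so these three facts together will place $\chi_\lambda$ in $\hat{\Gamma}$.

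The first step, and the main obstacle, is verifying that $d\tau(\Gamma)\subseteq \mathcal{A}$, which is needed for the composition $\psi_\lambda\circ d\tau$ to even be defined on all of $\Gamma$. Since $d\tau(\gamma_1),\ldots,d\tau(\gamma_n)$ generate $d\tau(\Gamma)$ as a group, every $d\tau(\gamma)$ is a finite product of these generators and their inverses. The algebra $\mathcal{A}$ already contains the generators and is closed under multiplication, so the only nontrivial point is to check that each $d\tau(\gamma_j)^{-1}$ lies in $\mathcal{A}$. I would invoke the Cayley–Hamilton theorem: since $X$ is finite-dimensional and $d\tau(\gamma_j)$ is invertible (being unitary by Lemma \ref{l:dt}), its inverse is a polynomial in $d\tau(\gamma_j)$ itself, and so belongs to $\mathcal{A}$.

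Once this inclusion is in place, step (b) is immediate: for $\gamma_1',\gamma_2'\in\Gamma$,
\[
\chi_\lambda(\gamma_1'+\gamma_2') = \psi_\lambda\bigl(d\tau(\gamma_1')d\tau(\gamma_2')\bigr) = \psi_\lambda(d\tau(\gamma_1'))\,\psi_\lambda(d\tau(\gamma_2')) = \chi_\lambda(\gamma_1')\chi_\lambda(\gamma_2'),
\]
using that $d\tau$ is a group homomorphism (Lemma \ref{l:dt}) and that $\psi_\lambda$ is multiplicative on $\mathcal{A}$. Additionally $\chi_\lambda(0)=\psi_\lambda(I)=1$.

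For step (c), I would use the standard fact that $\psi(a)\in\sigma(a)$ for every $a\in\mathcal{A}$ and every $\psi\in\mathcal{M}_\mathcal{A}$: if $\psi_\lambda(d\tau(\gamma))=\mu$ then $d\tau(\gamma)-\mu I\in\ker\psi_\lambda$, which is a maximal ideal of $\mathcal{A}$, and hence $d\tau(\gamma)-\mu I$ is not invertible in $\mathcal{A}$. Since $d\tau(\gamma)$ is unitary, $\sigma(d\tau(\gamma))\subseteq\mathbb{T}$, and therefore $|\chi_\lambda(\gamma)| = |\psi_\lambda(d\tau(\gamma))| = 1$. Combining steps (a)–(c) with the automatic continuity gives $\chi_\lambda\in\hat{\Gamma}$.
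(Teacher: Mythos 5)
Your proof is correct, and its overall strategy is the same as the paper's: compose the multiplicative functional $\psi_\lambda$ with the unitary representation $d\tau$ (Lemma \ref{l:dt}) and check that the result is a homomorphism into $\mathbb{T}$, continuity being automatic on a discrete group. There are two points of difference worth noting. First, you explicitly verify well-definedness, namely that $d\tau(\Gamma)\subseteq\mathcal{A}$, by observing that each $d\tau(\gamma_j)^{-1}$ is a polynomial in $d\tau(\gamma_j)$ via Cayley--Hamilton; the paper passes over this silently when it speaks of restricting $\psi_\lambda$ to the group $d\tau(\Gamma)$, so this is a genuine (if small) gap-filling contribution. Second, for the modulus-one claim the paper simply uses that $\psi_\lambda$ is bounded (of norm one) and multiplicative, so that $|\psi_\lambda(u)|\leq 1$ and $|\psi_\lambda(u)^{-1}|=|\psi_\lambda(u^{-1})|\leq 1$ for the unitaries $u=d\tau(\gamma)$, whereas you go through the spectral containment $\psi_\lambda(a)\in\sigma_{\mathcal{A}}(a)$ together with $\sigma(d\tau(\gamma))\subseteq\mathbb{T}$. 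One small caveat with your route: the maximal-ideal argument places $\psi_\lambda(d\tau(\gamma))$ in the spectrum relative to the subalgebra $\mathcal{A}$, which in general Banach algebras can be strictly larger than the operator spectrum; here the two coincide because $X$ is finite dimensional (the same Cayley--Hamilton observation you already made shows any element of $\mathcal{A}$ invertible in $B(X)$ has its inverse in $\mathcal{A}$), so your argument is sound, but it is worth making that identification explicit. The norm argument used in the paper is slightly more economical and does not need this remark; your version buys the explicit well-definedness check in exchange.
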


\proof
By Lemma \ref{l:dt}, $d\tau$ is a unitary representation.
Since $\psi_\lambda$ is non-zero, bounded and multiplicative, its restriction to the group $d\tau(\Gamma)$ has codomain $\mathbb{T}$.
Thus, the composition $\psi_\lambda\circ d \tau:\Gamma\to\mathbb{T}$ is a group homomorphism and hence a character in  $\hat{\Gamma}$.
\endproof

The conjugate of the character $\chi_\lambda$ in Lemma \ref{lem:char} will be referred to as a {\em  joint spectral point} for the group homomorphism $\tau$ and the generating  $n$-tuple $T=(d\tau(\gamma_1),\ldots,d\tau(\gamma_n))$. The set of all joint spectral points for $\tau$ and  $T$ will be denoted $\Omega_{js}(\tau,T)$,
$$\Omega_{js}(\tau,T)=\{\bar{\chi}_{\lambda}\in\hat{\Gamma}: \lambda\in\sigma(T)\}.$$

\begin{Lemma}
    The set $\Omega_{js}(\tau,T)$  is independent of the choice of generating $n$-tuple $T$.
\end{Lemma}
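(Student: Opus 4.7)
The strategy is to reformulate $\Omega_{js}(\tau,T)$ in a way that manifestly depends only on the homomorphism $\tau$ and not on the chosen generating tuple. The key observation I plan to exploit is that the commutative unital Banach algebra $\mathcal{A}(T)\subseteq B(X)$ generated by the components of $T$ ought, for every generating $n$-tuple, to coincide with the unital Banach algebra $\mathcal{B}$ generated by the entire image group $d\tau(\Gamma)$.

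First I would verify the identity $\mathcal{A}(T)=\mathcal{B}$. The inclusion $\mathcal{A}(T)\subseteq\mathcal{B}$ is immediate, since each $T_j=d\tau(\gamma_j)$ sits inside $d\tau(\Gamma)$. For the reverse inclusion, I would invoke Lemma \ref{l:dt} to note that each $T_j$ is unitary on the finite dimensional space $X$, so its inverse is already a polynomial in $T_j$ (via Cayley--Hamilton, for example). Hence $T_j^{-1}\in\mathcal{A}(T)$ for each $j$, and because the $T_j$'s generate $d\tau(\Gamma)$ as a group, products of $T_j^{\pm 1}$ realise every element of $d\tau(\Gamma)$. This gives $d\tau(\Gamma)\subseteq\mathcal{A}(T)$ and hence $\mathcal{B}\subseteq\mathcal{A}(T)$.

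With this identity in hand, the maximal ideal space $\mathcal{M}_{\mathcal{A}(T)}$ is also independent of $T$. The parametrisation recalled just before Lemma \ref{lem:char} then supplies a bijection $\lambda\mapsto\psi_\lambda$ between $\sigma(T)$ and $\mathcal{M}_{\mathcal{A}(T)}$: surjectivity is the displayed identity $\sigma(T)=\{(\psi(T_1),\ldots,\psi(T_n)):\psi\in\mathcal{M}_{\mathcal{A}(T)}\}$, while injectivity follows because $T_1,\ldots,T_n$ generate $\mathcal{A}(T)$ and so separate the points of its maximal ideal space. I can therefore rewrite
\[
\Omega_{js}(\tau,T)=\{\,\overline{\psi_\lambda\circ d\tau}\,:\,\lambda\in\sigma(T)\,\}=\{\,\overline{\psi\circ d\tau}\,:\,\psi\in\mathcal{M}_{\mathcal{A}(T)}\,\},
\]
and the right-hand side makes no reference to $T$ at all.

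The only substantive input is the finite dimensional/unitarity argument ensuring $T_j^{-1}\in\mathcal{A}(T)$; without this one would only control the semigroup generated by the $T_j$ rather than the full group $d\tau(\Gamma)$, and $\mathcal{A}(T)$ would a priori shrink when one discards redundant generators. Once this point is settled, everything else is bookkeeping around the Gelfand correspondence for $\mathcal{A}(T)$.
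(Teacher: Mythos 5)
Your argument is correct, but it proceeds along a genuinely different route from the paper. The paper compares two generating tuples $T$ and $T'$ directly: it writes each $d\tau(\gamma_j')$ as a monomial $d\tau(\gamma_1)^{j_1}\cdots d\tau(\gamma_n)^{j_n}$, so that $T'=p(T)$ for a monomial map $p$, invokes the spectral mapping theorem (Lemma \ref{l:js}$(iii)$) to get $\sigma(T')=p(\sigma(T))$, and then verifies $\chi_{\lambda'}=\chi_{\lambda}$ by evaluating both characters on the elements $\gamma_j'$. You instead make the underlying algebra itself tuple-free: using unitarity and Cayley--Hamilton to put each $T_j^{-1}$ inside $\mathcal{A}(T)$, you identify $\mathcal{A}(T)$ with the unital algebra generated by the whole group $d\tau(\Gamma)$, then use the bijection $\lambda\mapsto\psi_\lambda$ between $\sigma(T)$ and $\mathcal{M}_{\mathcal{A}}$ (surjectivity from the displayed description of $\sigma(T)$, injectivity since the $T_j$ generate) to rewrite $\Omega_{js}(\tau,T)=\{\overline{\psi\circ d\tau}:\psi\in\mathcal{M}_{\mathcal{A}}\}$, which visibly depends only on $\tau$. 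Both proofs rest on the same Gelfand-theoretic identification of $\sigma(T)$ with $\mathcal{M}_{\mathcal{A}}$ recalled before Lemma \ref{lem:char}; the paper's version is shorter and more computational, while yours is more structural and yields an intrinsic, generator-free description of $\Omega_{js}(\tau)$ as a byproduct. A further small advantage of your route: since group generation requires inverses, the exponents $j_i$ in the paper's map $p$ may be negative, so $p$ is strictly speaking a Laurent monomial map and the stated polynomial spectral mapping lemma needs the unitarity of the operators to apply; your Cayley--Hamilton observation absorbs exactly this point, and it is indeed the essential input you correctly flag, since without it one would only control the semigroup generated by the $T_j$.
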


\proof
Let $T=(d\tau(\gamma_1),\ldots, d\tau(\gamma_n))$ and $T'=(d\tau(\gamma_1'),\ldots, d\tau(\gamma_m'))$ be two generating tuples for $d\tau(\Gamma)$.
Let $\lambda'=(\lambda_1',\ldots,\lambda_m')\in \sigma(T')$. We need to show that $\chi_{\lambda'}=\chi_{\lambda}$ for some $\lambda\in\sigma(T)$.
Note that, for each $j=1,\ldots,m$, there exists $j_1,\ldots,j_n\in\mathbb{Z}$ such that $d\tau(\gamma_j') = d\tau(\gamma_1)^{j_1}\cdots d\tau(\gamma_n)^{j_n}$.
Thus $T' = p(T)$ where $p=(p_1,\ldots,p_m)$ is the polynomial mapping with 
$p_j(z_1,\ldots,z_n) = z_1^{j_1}\cdots z_n^{j_n}$ for each $j=1,\ldots,m$.
By the spectral mapping theorem (Lemma \ref{l:js}$(iii)$), $\sigma(T') = \sigma(p(T))=p(\sigma(T))$.
Thus, $\lambda'=p(\lambda)$ for some $\lambda=(\lambda_1,\ldots,\lambda_n)\in\sigma(T)$.
Note that, for each $j=1,\ldots,m$,
$$\chi_{\lambda}(\gamma_j') = \chi_{\lambda}(\gamma_1^{j_1}\cdots\gamma_n^{j_n})
=\chi_{\lambda}(\gamma_1)^{j_1}\cdots\chi_{\lambda}(\gamma_n)^{j_n}
=\lambda_1^{j_1}\cdots\lambda_n^{j_n}=p_j(\lambda)=\lambda_j'=\chi_{\lambda'}(\gamma_j').$$
Thus $\chi_{\lambda'}=\chi_{\lambda}$.
\endproof

In light of the above lemma, we write $\Omega_{js}(\tau) :=\Omega_{js}(\tau,T)$ for some (and hence any) generating $n$-tuple $T=(d\tau(\gamma_1),\ldots,d\tau(\gamma_n))$ and refer to $\Omega_{js}(\tau)$ as the set of  joint spectral points for $\tau$.


\subsection{Gain frameworks}
A \emph{$\Gamma$-gain framework} is a tuple
$\G_0=(G_0,m,\varphi,\tau)$ where:
\begin{enumerate}[(i)]
\item $G_0=(V_0,E_0)$ is a directed multigraph,
\item $m:E_0 \rightarrow \Gamma$, $e\mapsto m_e$, is an assignment of group elements to the directed edges of $G_0$,
\item $\varphi=(\varphi_{e})_{e\in E_0}$ is a collection of linear transformations from $X$ to $Y$, and,
\item $\tau:\Gamma\to \Isom(X)$  is a group homomorphism.
\end{enumerate}
The pair $(G_0,m)$ is referred to as the {\em $\Gamma$-gain graph} for $\G_0$.

Given a directed edge  $e\in E_0$, the \emph{source} and \emph{range} of $e$ are denoted by $s(e)$ and $r(e)$, respectively. 
For each $\chi \in\hat{\Gamma}$,  the {\em orbit matrix} $O_{\G_0}(\chi)=(o_{e,v})$ is a matrix with rows indexed by the edge set $E_0$, columns indexed by the vertex set $V_0$ and matrix entries $o_{e,v}\in B(X,Y)$. If $e\in E_0$ is a loop with $v=s(e)=r(e)$ and $w\in V_0$ then the corresponding $(e,w)$-row entry is,
\[o_{e,w}=\left\{\begin{array}{cl}
\varphi_e(I-\chi(m_e)d\tau(m_e)) & \mbox{ if } w=v, \\
0 & \mbox{ otherwise. }\end{array}\right.\] 
Thus, the row of the orbit matrix $O_{\G_0}(\chi)$ which is indexed by the loop edge $e$ has entries, 
\[\kbordermatrix{
 & & & & v & & &\\
 e  & 0&\cdots & 0& \varphi_e(I-\chi(m_e) d\tau(m_e)) & 0 & \cdots &0} \]
  If $e\in E_0$ is not a loop  and $w\in V_0$ then the $(e,w)$-row entry is,
\[o_{e,w}=\left\{\begin{array}{cl}
\varphi_e & \mbox{ if } w=s(e), \\
-\chi(m_e)\varphi_e\circ d\tau(m_e) & \mbox{ if } w=r(e), \\
0 & \mbox{ otherwise. }\end{array}\right.\]
The row of the orbit matrix $O_{\G_0}(\chi)$ which is indexed by the non-loop edge $e$ has entries, 
\[\kbordermatrix{
 & & & & s(e) & & & &  r(e) & & &  \\
 e & 0& \cdots & 0& \varphi_e & 0 & \cdots & 0 &-\chi(m_e)\varphi_e\circ d\tau(m_e) & 0 & \cdots  &0} \]
 Note that each orbit matrix can be regarded as a linear transformation $O_{\G_0}(\chi):X^{V_0}\to Y^{E_0}$. 

  \begin{figure}
        \centering
        \hspace{-3.5cm}
\begin{tikzpicture} 
    \node[circle, draw, fill=black, inner sep=1.5pt, label=below:$v$] (v) at (0,0) {};

    \draw[ postaction={decorate,decoration={markings,mark=at position 0.5 with {\arrow{stealth}}}}]
        (v) to[out=35, in=145, looseness=120] node[midway, above] {$e_1$} (v);
    \draw[postaction={decorate,decoration={markings,mark=at position 0.5 with {\arrow{stealth}}}}]
        (v) to[out=35, in=145, looseness=90] node[midway, above] {$e_2$} (v);
    \draw[postaction={decorate,decoration={markings,mark=at position 0.5 with {\arrow{stealth}}}}] 
        (v) to[out=35, in=145, looseness=40] node[midway, above] {$e_{n-1}$} (v);

    \node[] (d) at (0,1.75) {$\vdots$};
\end{tikzpicture}\hspace{-2cm}
\begin{tikzpicture}
 \centering
    \tikzstyle{vertex}=[circle, draw, fill=black,  inner sep=1.5pt]
    
    \node[vertex, label=below:$v$] (v) at (0,0) {};
    \node[vertex, label=below:$w$] (w) at (3.5,0) {};
    
    \draw[postaction={decorate,decoration={markings,mark=at position 0.5 with {\arrow{stealth}}}}] (v) to[bend left=90,looseness=2 ] node[midway, above] {$e_{n-1}$} (w);
     \draw[postaction={decorate,decoration={markings,mark=at position 0.5 with {\arrow{stealth}}}}] (v) to[bend left=45] node[midway, above] {$e_{2}$} (w);
    \draw[postaction={decorate,decoration={markings,mark=at position 0.5 with {\arrow{stealth}}}}] (v) to node[midway, above] {$e_1$} (w);

    \node[] (d) at (1.75,1.75) {$\vdots$};
\end{tikzpicture}

        \caption{The directed multigraphs   in Examples \ref{Ex:loops} (left) and  \ref{Ex:parallel} (right).}
        \label{fig:loops}
    \end{figure}
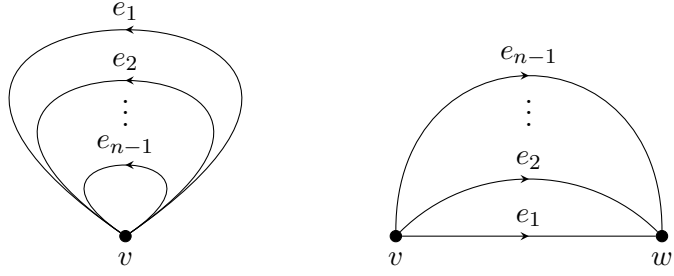

The following two examples present the general form of an orbit matrix for a $\mathbb{Z}_n$-gain framework $\G_0$ with respect to two different multigraphs, an arbitrary collection of linear transformations $\varphi=(\varphi_{v,w})_{vw\in E_0}$ and an unspecified group homomorphism $\tau:\Gamma\to\Isom(X)$. 
Recall that the dual group $\hat{\mathbb{Z}}_n$ is the multiplicative group of characters $\chi_j:\mathbb{Z}_n\to\mathbb{T}$, $\chi_j(k)=\omega^{jk}$, where $\omega=e^{2\pi i/n}$ denotes the $n$th root of unity and $j=0,\ldots,n-1$. 

\begin{Example}
\label{Ex:loops}
Let $\G_0=(G_0,m,\varphi,\tau)$ be a $\mathbb{Z}_n$-gain framework
where $G_0=(V_0,E_0)$ is the directed multigraph   with a single vertex $v$ and $n-1$ loops $e_1,e_2,\ldots,e_{n-1}$ illustrated in  Figure \ref{fig:loops} and $m:E_0\to\mathbb{Z}_n$,  $m(e_k)=k$, for each $k=1,2,\ldots,n-1$. 
Then for each $j$,
$$O_{\G_0}(\chi_j) 
= \kbordermatrix{ 
 & v \\
e_1 &\varphi_{e_1}(I - \omega^j d\tau(1)) \\
e_2 &\varphi_{e_2}(I - \omega^{2j} d\tau(2)) \\
\vdots & \vdots\\
e_{n-1} &\varphi_{e_{n-1}}(I - \omega^{(n-1)j} d\tau(n-1)) }$$
\end{Example}

\begin{Example}
    \label{Ex:parallel}
Let $\G_0=(G_0,m,\varphi,\tau)$ be a $\mathbb{Z}_n$-gain framework where  $G_0=(V_0,E_0)$ is the directed multigraph with two vertices $v$ and $w$ and $n-1$ parallel edges $e_1,e_2,\ldots,e_{n-1}$ illustrated in  Figure \ref{fig:loops}, and $m:E_0\to\mathbb{Z}_n$, $m(e_k)=k$, for each $k=1,2,\ldots,n-1$. 
For each $j$,
$$O_{\G_0}(\chi_j) 
= \kbordermatrix{ 
& v & w \\
e_1 & \varphi_{e_1} & - \omega^j\varphi_{e_1}\circ d\tau(1) \\
e_2 & \varphi_{e_2} & - \omega^{2j}\varphi_{e_2}\circ d\tau(2) \\
\vdots&\vdots&\vdots\\
e_{n-1} & \varphi_{e_{n-1}} & - \omega^{(n-1)j}\varphi_{e_{n-1}}\circ d\tau(n-1)}$$
\end{Example}

 \subsection{The RUM spectrum}
The \emph{Rigid Unit Mode (RUM) spectrum} of a $\Gamma$-gain framework $\G_0$ is denoted $\Omega(\mathcal{G}_0)$ and defined as follows,
\[\Omega(\G_0):=\{\chi\in\hat{\Gamma}:\ker O_{\G_0}(\chi)\neq\{0\}\}.\]

\begin{Theorem}
\label{t:eigen}
    Let $\G_0=(G_0,m,\varphi,\tau)$ be a $\Gamma$-gain framework and let 
$T=(d\tau(\gamma_1),\ldots,d\tau(\gamma_n))$ be a generating $n$-tuple for the group $d\tau(\Gamma)$.
    If $\lambda=(\lambda_1,\ldots,\lambda_n)\in \sigma(T)$ is a joint eigenvalue for $T$ with a joint eigenvector $a$ then the column vector
    $[a\,\cdots \,a]^\intercal$ lies in $\ker O_{\G_0}(\bar{\chi}_\lambda)$.
    
    In particular, $\Omega_{js}(\tau)$ is a non-empty subset of the RUM spectrum $ \Omega(\G_0)$.
\end{Theorem}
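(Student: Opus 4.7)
The plan is to directly verify that the column vector $[a\,\cdots\,a]^\intercal \in X^{V_0}$ is annihilated by $O_{\G_0}(\bar\chi_\lambda)$ by inspecting each row of the orbit matrix. The crucial preliminary observation is that the joint eigenvector $a$ is in fact an eigenvector for $d\tau(\gamma)$ for every $\gamma\in\Gamma$, with eigenvalue $\chi_\lambda(\gamma)$. To establish this, I would note that, by hypothesis, $d\tau(\gamma_1),\ldots,d\tau(\gamma_n)$ generate $d\tau(\Gamma)$, so for any $\gamma\in\Gamma$ we may write $d\tau(\gamma)=d\tau(\gamma_1)^{k_1}\cdots d\tau(\gamma_n)^{k_n}$ for some integers $k_j$. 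Applying this to $a$ and using $d\tau(\gamma_j)a=\lambda_j a$ yields $d\tau(\gamma)a = \lambda_1^{k_1}\cdots\lambda_n^{k_n}\,a$. On the other hand, $\psi_\lambda$ is a multiplicative linear functional on the commutative Banach algebra generated by the $d\tau(\gamma_j)$, so $\psi_\lambda(d\tau(\gamma))=\lambda_1^{k_1}\cdots\lambda_n^{k_n}$, which by definition equals $\chi_\lambda(\gamma)$. Hence $d\tau(\gamma)a=\chi_\lambda(\gamma)a$ for every $\gamma\in\Gamma$.

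Armed with this identity, I would now compute the action of $O_{\G_0}(\bar\chi_\lambda)$ on $[a\,\cdots\,a]^\intercal$ row by row. For a loop $e\in E_0$ at a vertex $v$, the only nonzero entry in that row is $\varphi_e(I-\bar\chi_\lambda(m_e)d\tau(m_e))$, acting on the copy of $a$ indexed by $v$. The image is $\varphi_e\bigl(a-\bar\chi_\lambda(m_e)\chi_\lambda(m_e)a\bigr)$, which vanishes since $\chi_\lambda\in\hat\Gamma$ takes values in $\mathbb{T}$, so $\bar\chi_\lambda(m_e)\chi_\lambda(m_e)=1$. For a non-loop edge $e$ with $s(e)\neq r(e)$, the row contributes $\varphi_e a-\bar\chi_\lambda(m_e)\varphi_e(d\tau(m_e)a)=\varphi_e a-\bar\chi_\lambda(m_e)\chi_\lambda(m_e)\varphi_e a=0$ by the same cancellation. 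Thus $[a\,\cdots\,a]^\intercal\in\ker O_{\G_0}(\bar\chi_\lambda)$.

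For the second assertion, observe that since the joint eigenvector $a$ is nonzero, the vector $[a\,\cdots\,a]^\intercal$ is a nonzero element of $\ker O_{\G_0}(\bar\chi_\lambda)$, so $\bar\chi_\lambda\in\Omega(\G_0)$. This shows $\Omega_{js}(\tau)\subseteq\Omega(\G_0)$. Non-emptiness of $\Omega_{js}(\tau)$ then follows from Lemma \ref{l:js}$(i)$, which guarantees that $\sigma(T)\neq\emptyset$ for any commuting tuple on a finite-dimensional space (the operators $d\tau(\gamma_j)$ commute because $\Gamma$ is abelian and $d\tau$ is a homomorphism by Lemma \ref{l:dt}).

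The only step requiring genuine care is the identification $d\tau(\gamma)a=\chi_\lambda(\gamma)a$ for all $\gamma\in\Gamma$, since the joint eigenvector is initially defined only relative to the finite generating tuple $T$; the argument hinges on combining the multiplicativity of $\psi_\lambda$ with the fact that $d\tau$ is a group homomorphism into a unitary group, and on the assumption that $d\tau(\gamma_1),\ldots,d\tau(\gamma_n)$ generate $d\tau(\Gamma)$. Once this is in hand, the matrix computation is mechanical and the remaining statements are immediate.
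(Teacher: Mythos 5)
Your proof is correct and follows essentially the same route as the paper: both verify row by row that $[a\,\cdots\,a]^\intercal$ is annihilated by $O_{\G_0}(\bar{\chi}_\lambda)$ via the identity $\bar{\chi}_\lambda(m_e)\,d\tau(m_e)a=a$, obtained by expanding $d\tau(m_e)$ in the generators and using multiplicativity of $\psi_\lambda$ (the paper leaves this identification slightly more implicit). Your explicit appeal to Lemma \ref{l:js}$(i)$ for non-emptiness matches the paper's intent, so there is nothing to add.
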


\proof
Let $e\in E_0$.
Then $d\tau(m_e)=d\tau(\gamma_1)^{j_1}\cdots d\tau(\gamma_n)^{j_n}$ for some $j_1,\ldots,j_n\in\mathbb{Z}$.
Write $\lambda^{m_e}:=\lambda_1^{j_1}\cdots\lambda_n^{j_n}$.
Then, by Lemma \ref{l:dt},
$$d\tau(m_e)a = d\tau(\gamma_1)^{j_1}\cdots d\tau(\gamma_n)^{j_n}a
= \lambda^{m_e}a.$$
Hence,
$\bar{\chi}_\lambda(m_e)  d\tau(m_e)a
= \overline{\lambda^{m_e}} \lambda^{m_e}a=a.$
The result now follows from the definition of $O_{\G_0}(\bar{\chi}_\lambda)$.
\endproof

\begin{Corollary}
\label{c:RUM}
Let $\G_0=(G_0,m,\varphi,\tau)$ be a $\Gamma$-gain framework.
Then, $$|\Omega(\G_0)|\geq |\sigma(T)| \geq \max_{\gamma\in\Gamma}\,|\sigma(d\tau(\gamma))|,$$ 
where $T=(d(\gamma_1),\ldots,d\tau(\gamma_n))$ is any generating $n$-tuple for the group $d\tau(\Gamma)$.
\end{Corollary}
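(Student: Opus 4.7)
The plan is to decompose the double inequality into its two constituent estimates and handle each by a direct appeal to the results already established.

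For the first inequality $|\Omega(\G_0)| \geq |\sigma(T)|$, I would invoke Theorem \ref{t:eigen}, which gives the inclusion $\Omega_{js}(\tau) \subseteq \Omega(\G_0)$. Since $\Omega_{js}(\tau)$ is by definition the image $\{\bar{\chi}_\lambda : \lambda \in \sigma(T)\}$, what remains is to show that the assignment $\lambda \mapsto \bar{\chi}_\lambda$ is injective, so that $|\Omega_{js}(\tau)| = |\sigma(T)|$. I would verify this by evaluating $\chi_\lambda$ on the generators: the defining formula $\chi_\lambda = \psi_\lambda \circ d\tau$ gives $\chi_\lambda(\gamma_j) = \psi_\lambda(d\tau(\gamma_j)) = \lambda_j$ for each $j$, so the character $\chi_\lambda$ determines each coordinate of $\lambda$, and hence $\lambda$ itself.

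For the second inequality $|\sigma(T)| \geq |\sigma(d\tau(\gamma))|$ for each $\gamma \in \Gamma$, the approach is to express $d\tau(\gamma)$ as a (Laurent) monomial in the generating tuple, namely $d\tau(\gamma) = d\tau(\gamma_1)^{j_1} \cdots d\tau(\gamma_n)^{j_n} = p(T)$ with exponents $j_1, \ldots, j_n \in \mathbb{Z}$, exactly as in the proof that $\Omega_{js}(\tau,T)$ is independent of $T$. The spectral mapping theorem (Lemma \ref{l:js}(iii)) then yields $\sigma(d\tau(\gamma)) = p(\sigma(T))$, and since $p$ is a single-valued function, its image has cardinality at most $|\sigma(T)|$. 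Taking the maximum over $\gamma \in \Gamma$ delivers the required bound.

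The only technical point that requires attention is the possible appearance of negative exponents $j_k$, in which case $p$ is a Laurent monomial rather than a genuine polynomial. This is, however, a purely cosmetic obstacle: by Lemma \ref{l:dt} the representation $d\tau$ is unitary, so $\sigma(T) \subseteq \mathbb{T}^n$, and on the unit circle every coordinate function is invertible, making $p$ well-defined on $\sigma(T)$. The spectral mapping argument thus applies without change, just as it was implicitly used in the preceding independence lemma. I anticipate no substantive difficulty beyond this small bookkeeping remark.
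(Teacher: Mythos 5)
Your proof is correct and follows essentially the same route as the paper: the first inequality via Theorem \ref{t:eigen} together with $|\Omega_{js}(\tau)|=|\sigma(T)|$, and the second via the spectral mapping theorem of Lemma \ref{l:js}$(iii)$ applied to a monomial expression of $d\tau(\gamma)$ in the generators. Your explicit injectivity check $\chi_\lambda(\gamma_j)=\lambda_j$ and the remark about negative exponents merely spell out details the paper's proof leaves implicit.
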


\proof
Let $T=(d(\gamma_1),\ldots,d\tau(\gamma_n))$ be a generating $n$-tuple  for $d\tau(\Gamma)$.
Then, by Theorem \ref{t:eigen}, $|\Omega(\G_0)|\geq |\Omega_{js}(\tau)|=|\sigma(T)|$. 
To prove the second inequality, let $\gamma\in\Gamma$. Note that $d\tau(\gamma)=p(d \tau(\gamma_1),\ldots, d\tau(\gamma_n))$ for some polynomial function $p:\mathbb{C}^n\to \mathbb{C}$.
By Lemma \ref{l:js}$(iii)$, $\sigma(d\tau(\gamma))=p(\sigma(T))$ and so    $|\sigma(T)|\geq|\sigma(d\tau(\gamma))|$.
\endproof


\subsection{Gain framework operators} 
\label{ss:gainop}
Let $\G_0=(G_0,m,\varphi,\tau)$ be a $\Gamma$-gain framework.
Define a bounded operator $\tilde{C}(\G_0):\ell^\infty(\Gamma,X^{V_0})\to  \ell^\infty(\Gamma,Y^{E_0})$, called the {\em gain framework operator}, by
$$(\tilde{C}(\G_0)f)(\gamma) = (\varphi_{e}\circ d\tau(-\gamma)(f(\gamma)_{s(e)} - f(\gamma+m_e)_{r(e)}))_{e\in E_0}.$$
A {\em bounded infinitesimal flex} of the $\Gamma$-gain framework $\G_0$  is a vector $u\in \ell^\infty(\Gamma,X^{V_0})$ which satisfies $\tilde{C}(\G_0)u=0$. 

The following proposition was proved in \cite[Proposition 3.5]{kkm} and it is included for the sake of completeness. See Section \ref{s:reps} for the definitions of the representations $\pi_{X^{V_0},\tilde{\tau}}$ and $\pi_{Y^{E_0}}$.
Recall that an {\em intertwiner} for a pair of representations $\pi_1:\Gamma\to B(Z_1)$ and $\pi_2:\Gamma\to B(Z_2)$ is a bounded operator $C:Z_1\to Z_2$ such that, 
$$C\circ \pi_1(\gamma) = \pi_2(\gamma)\circ C,\quad\quad \forall\, \gamma\in \Gamma.$$

\begin{Proposition}
\label{p:intertwine}
    $\tilde{C}(\G_0)$ is an intertwiner for the representations $\pi_{X^{V_0},\tilde{\tau}}$ and $\pi_{Y^{E_0}}$.
\end{Proposition}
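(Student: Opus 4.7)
The plan is to verify the intertwining identity $\tilde{C}(\G_0)\circ \pi_{X^{V_0},\tilde{\tau}}(\gamma_0) = \pi_{Y^{E_0}}(\gamma_0)\circ \tilde{C}(\G_0)$ by direct pointwise computation for an arbitrary $\gamma_0\in\Gamma$ and $f\in \ell^\infty(\Gamma,X^{V_0})$. The whole argument rests on a single bookkeeping observation: the twist built into $\pi_{X^{V_0},\tilde{\tau}}$ through conjugation by $T_{\tilde{\tau}}$ precisely cancels the factor $d\tau(-\gamma)$ sitting in front of $\tilde{C}(\G_0)$, so after simplification both sides reduce to the same expression involving $d\tau(\gamma_0-\gamma')$.

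The first step is to obtain a tractable formula for the twisted translation $\pi_{X^{V_0},\tilde{\tau}}(\gamma_0)$. Using that $T_{\tilde{\tau}}^{-1}$ acts pointwise by $\tilde{\tau}(-\gamma)$ (since $\tilde{\tau}$ is a unitary representation, cf.~Lemma \ref{l:dt}), a short three-line unfolding of $T_{\tilde{\tau}}\circ \pi_{X^{V_0}}(\gamma_0)\circ T_{\tilde{\tau}}^{-1}$ shows that
\[
\bigl(\pi_{X^{V_0},\tilde{\tau}}(\gamma_0)f\bigr)(\gamma') \;=\; \tilde{\tau}(\gamma_0)\,f(\gamma'-\gamma_0),
\]
where the shift on the argument is the ordinary translation but the value is additionally rotated by $\tilde{\tau}(\gamma_0)$ (note that $\tilde{\tau}(\gamma')$ cancels against $\tilde{\tau}(\gamma')^{-1}$).

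The second step is to plug this formula into the definition of $\tilde{C}(\G_0)$. For each edge $e\in E_0$, the componentwise action $(\tilde{\tau}(\gamma_0)x)_v = d\tau(\gamma_0)x_v$ and the homomorphism property of $d\tau$ yield
\[
\bigl(\tilde{C}(\G_0)\pi_{X^{V_0},\tilde{\tau}}(\gamma_0)f\bigr)(\gamma')_e \;=\; \varphi_e\circ d\tau(\gamma_0-\gamma')\bigl(f(\gamma'-\gamma_0)_{s(e)} - f(\gamma'-\gamma_0+m_e)_{r(e)}\bigr).
\]

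On the other hand, since $\pi_{Y^{E_0}}(\gamma_0)$ is simply translation by $\gamma_0$, evaluating the right-hand side at $\gamma'$ gives $(\tilde{C}(\G_0)f)(\gamma'-\gamma_0)$, and the definition of $\tilde{C}(\G_0)$ produces the factor $d\tau(-(\gamma'-\gamma_0)) = d\tau(\gamma_0-\gamma')$ in front of the same difference $f(\gamma'-\gamma_0)_{s(e)}-f(\gamma'-\gamma_0+m_e)_{r(e)}$. Comparing edge by edge completes the verification.

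There is no genuine obstacle here; the argument is essentially a definition-chase. The only mild subtlety is keeping the signs straight when unfolding $T_{\tilde{\tau}}^{-1}$ and when recognising that $d\tau(-\gamma')\circ d\tau(\gamma_0)=d\tau(\gamma_0-\gamma')$, which is exactly the factor produced on the translation side after replacing $\gamma'$ by $\gamma'-\gamma_0$ inside $d\tau(-\cdot)$. This matching cancellation is the structural reason the twisted representation was introduced in the first place.
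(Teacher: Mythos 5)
Your proof is correct and takes essentially the same route as the paper: the paper packages the identical cancellation by observing it suffices to show $\tilde{C}(\G_0)T_{\tilde{\tau}}$ commutes with the untwisted translation representations $\pi_{X^{V_0}}$ and $\pi_{Y^{E_0}}$, whereas you verify the twisted intertwining identity directly after computing $(\pi_{X^{V_0},\tilde{\tau}}(\gamma_0)f)(\gamma')=\tilde{\tau}(\gamma_0)f(\gamma'-\gamma_0)$. Both arguments reduce to the same bookkeeping fact $d\tau(-\gamma')\circ d\tau(\gamma_0)=d\tau(\gamma_0-\gamma')$, and your computation of the twisted representation and of both sides of the identity checks out.
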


\proof 
It suffices to show that the operator $\tilde{C}(\G_0)T_{\tilde{\tau}}$ is an intertwiner for the representations $\pi_{X^{V_0}}$ and $\pi_{Y^{E_0}}$. For each $f\in \ell^\infty(\Gamma,X^{V_0})$, 
\begin{align*}
(\tilde{C}(\G_0)T_{\tilde{\tau}} f)(\gamma) 
&= (\varphi_{e}\circ d\tau(-\gamma)((T_{\tilde{\tau}} f(\gamma))_{s(e)} - (T_{\tilde{\tau}} f(\gamma+m_e))_{r(e)}))_{e\in E_0}\\
&= (\varphi_{e}\circ d\tau(-\gamma)(d\tau(\gamma) f(\gamma)_{s(e)} - d\tau(\gamma+m_e) f(\gamma+m_e)_{r(e)}))_{e\in E_0}\\
&= (\varphi_{e}(f(\gamma)_{s(e)} - d\tau(m_e) f(\gamma+m_e)_{r(e)}))_{e\in E_0}
\end{align*}
Hence, 
\begin{align*}
(\pi_{Y^{E_0}}(\gamma')\tilde{C}(\G_0)T_{\tilde{\tau}} f)(\gamma) &= (\tilde{C}(\G_0)T_{\tilde{\tau}} f)(\gamma-\gamma')\\
&= (\varphi_{e}(f(\gamma-\gamma')_{s(e)} - d\tau(m_e) f(\gamma-\gamma'+m_e)_{r(e)}))_{e\in E_0}\\
&=(\tilde{C}(\G_0)T_{\tilde{\tau}} \pi_{X^{V_0}}(\gamma')f)(\gamma)
\end{align*}
\endproof

For each $\chi\in\hat{\Gamma}$ and each $a\in X^{V_0}$, define $\chi\otimes a\in \ell^\infty(\Gamma,X^{V_0})$ by setting $(\chi\otimes a)(\gamma)=\chi(\gamma)a$ for each $\gamma\in\Gamma$.

\begin{Proposition}
\label{p:orbit}
Let $\chi\in\hat{\Gamma}$ and $a\in X^{V_0}$. 
Then, for each $\gamma\in\Gamma$,
    $$\tilde{C}(\G_0)T_{\tilde{\tau}}(\chi\otimes a)(\gamma) = \chi(\gamma)O_{\G_0}(\chi)a.$$
\end{Proposition}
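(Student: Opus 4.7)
The plan is to carry out a direct computation, using the simplified formula for $\tilde{C}(\G_0)T_{\tilde{\tau}}$ that has already been derived in the proof of Proposition \ref{p:intertwine} and then matching the resulting expression entry-by-entry against the definition of the orbit matrix.

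First I would recall from the proof of Proposition \ref{p:intertwine} that for any $f \in \ell^\infty(\Gamma,X^{V_0})$,
\[
(\tilde{C}(\G_0)T_{\tilde{\tau}} f)(\gamma) = \bigl(\varphi_{e}\bigl(f(\gamma)_{s(e)} - d\tau(m_e) f(\gamma+m_e)_{r(e)}\bigr)\bigr)_{e\in E_0}.
\]
This lets me bypass the $d\tau(-\gamma)$ twist present in the original definition of $\tilde{C}(\G_0)$ and thus avoid recomputing those cancellations.

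Next I would specialise to $f=\chi\otimes a$, so that $f(\gamma)_v = \chi(\gamma)a_v$ and $f(\gamma+m_e)_v = \chi(\gamma)\chi(m_e)a_v$ for every $v \in V_0$, using that $\chi$ is a character. Substituting these identities into the formula above and pulling the common scalar $\chi(\gamma)$ out yields
\[
(\tilde{C}(\G_0)T_{\tilde{\tau}}(\chi\otimes a))(\gamma)
= \chi(\gamma)\bigl(\varphi_{e}(a_{s(e)} - \chi(m_e)d\tau(m_e)a_{r(e)})\bigr)_{e\in E_0}.
\]

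Finally I would verify that the bracketed expression indexed by $e \in E_0$ coincides with the $e$-th row of $O_{\G_0}(\chi)$ applied to $a$. For a non-loop edge $e$ this is immediate from the non-loop row formula, since only the $s(e)$- and $r(e)$-columns contribute, giving exactly $\varphi_e a_{s(e)} - \chi(m_e)\varphi_e\circ d\tau(m_e)\, a_{r(e)}$. For a loop with $v=s(e)=r(e)$, both contributions collapse onto the $v$-column and combine to $\varphi_e(I-\chi(m_e)d\tau(m_e))a_v$, which is precisely the loop-row formula. No step is particularly obstructive; the only real subtlety is handling loop edges separately so that the two cases of the orbit matrix definition are both accounted for.
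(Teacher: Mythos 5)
Your proposal is correct and follows essentially the same route as the paper: both invoke the simplified expression for $\tilde{C}(\G_0)T_{\tilde{\tau}}$ obtained in the proof of Proposition \ref{p:intertwine}, substitute $f=\chi\otimes a$, factor out $\chi(\gamma)$ using the character property, and identify the result with $O_{\G_0}(\chi)a$. Your explicit check of the loop versus non-loop rows is a slightly more detailed finish than the paper's, but it is the same argument.
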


\proof
Let $f=\chi\otimes a$.
From the proof of Proposition \ref{p:intertwine}, 
\begin{align*}
(\tilde{C}(\G_0)T_{\tilde{\tau}} f)(\gamma) 
&=  (\varphi_{e}(\chi(\gamma)a_{s(e)} - d\tau(m_e) \chi(\gamma+m_e)a_{r(e)}))_{e\in E_0} \\
&= \chi(\gamma) \varphi_e(a_{s(e)} - \chi(m_e) d\tau(m_e) a_{r(e)}))_{e\in E_0} \\
&= \chi(\gamma)O_{\G_0}(\chi)a
\end{align*}
\endproof

\begin{Remark}
The gain framework operator $\tilde{C}(\G_0)$ was introduced in a more restricted form in \cite[Section 3]{kkm}, where it was derived from the coboundary operator of a $\Gamma$-symmetric framework. The coboundary operator (\cite{kkp}) generalises the notion of an oriented incidence matrix for a directed graph and the notion of a rigidity matrix for a bar-and-joint framework. In the latter case, the space of bounded infinitesimal flexes of a gain framework $\G_0$ and its covering framework $\G$ are isomorphic. See Section \ref{s:applications} for further details and examples.
\end{Remark}


\subsection{$\chi$-symmetric vectors}
\label{chisymvec}
Let $\mathcal{G}_0=(G_0,m,\varphi,\tau)$ be a $\Gamma$-gain framework.
 Fix $\chi\in\hat{\Gamma}$ and  $a=(a_v)_{v\in V_0}\in X^{V_0}$.
 Define 
$z(\chi,a)=T_{\tilde{\tau}}(\chi\otimes a)\in\ell^\infty(\Gamma, X^{V_0})$.
Note that, for each $\gamma\in\Gamma$,
\begin{eqnarray}
\label{eqn:chi-sym}
z(\chi,a)(\gamma) = T_{\tilde{\tau}}(\chi\otimes a)(\gamma)= (\chi(\gamma)d\tau(\gamma)a_{v})_{v\in V_0}.    
\end{eqnarray}
We refer to $z(\chi,a)$ as a {\em  $\chi$-symmetric vector} for $\G_0$.
A {\em $\chi$-symmetric flex} of $\G_0$ is a $\chi$-symmetric vector  for $\G_0$  which is also a bounded infinitesimal flex of $\G_0$.
    
\begin{Lemma}
\label{l:kernel}
Let $\mathcal{G}_0=(G_0,m,\varphi,\tau)$ be a $\Gamma$-gain framework.
Let $\chi\in\hat{\Gamma}$ and  $a\in X^{V_0}$, $a\not=0$. 
The following statements are equivalent:
\begin{enumerate}[(i)]
    \item $z(\chi,a)$ is a $\chi$-symmetric flex of $\G_0$.
    \item $\chi\in \Omega(\G_0)$ and $a\in \ker O_{\G_0}(\chi)$.
\end{enumerate}
\end{Lemma}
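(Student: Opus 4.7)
The plan is to reduce the equivalence to a direct computation using Proposition \ref{p:orbit}. By definition, $z(\chi,a) = T_{\tilde{\tau}}(\chi\otimes a)$, so applying the gain framework operator and invoking Proposition \ref{p:orbit} immediately gives
\[
(\tilde{C}(\G_0)z(\chi,a))(\gamma) \;=\; \chi(\gamma)\,O_{\G_0}(\chi)a
\]
for every $\gamma\in\Gamma$. This identity is the engine of the whole argument.

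From here, the statement that $z(\chi,a)$ is a $\chi$-symmetric flex of $\G_0$ means, by the definition in Section \ref{chisymvec}, that $z(\chi,a)$ is a bounded infinitesimal flex, i.e.~that $\tilde{C}(\G_0)z(\chi,a)=0$ in $\ell^\infty(\Gamma,Y^{E_0})$. By the identity above, this is equivalent to requiring $\chi(\gamma)\,O_{\G_0}(\chi)a=0$ for every $\gamma\in\Gamma$. Since $\chi(\gamma)\in\mathbb{T}$ is a unit complex number, this is equivalent to $O_{\G_0}(\chi)a=0$, i.e.~$a\in \ker O_{\G_0}(\chi)$.

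Finally I would use the hypothesis $a\neq 0$ to bridge to the RUM spectrum: if $a\in\ker O_{\G_0}(\chi)$ and $a\neq 0$, then $\ker O_{\G_0}(\chi)\neq\{0\}$, so by the definition of $\Omega(\G_0)$ one has $\chi\in\Omega(\G_0)$. Conversely, the condition in $(ii)$ already supplies $a\in\ker O_{\G_0}(\chi)$, which by the preceding paragraph forces $\tilde{C}(\G_0)z(\chi,a)=0$, giving $(i)$.

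There is essentially no obstacle beyond carefully unwinding the definitions: Proposition \ref{p:orbit} does all the work, and the only subtlety worth flagging is that the equivalence $\tilde{C}(\G_0)z(\chi,a)=0 \iff O_{\G_0}(\chi)a=0$ rests on the nonvanishing of the character $\chi$ pointwise on $\Gamma$. I would make this observation explicit in the write-up so that the logical role of $a\neq 0$ (which is only needed to promote membership in $\ker O_{\G_0}(\chi)$ to nontriviality of that kernel) is clearly separated from the main computation.
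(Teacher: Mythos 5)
Your argument is correct and is essentially the paper's own proof, which simply cites Proposition \ref{p:orbit}; you have just spelled out the unwinding of definitions (unimodularity of $\chi$, and $a\neq 0$ giving nontriviality of $\ker O_{\G_0}(\chi)$) that the paper leaves implicit.
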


\proof
Apply Proposition \ref{p:orbit}.
\endproof

\begin{Proposition}
\label{l:li}
Let $Z=\{z(\chi_k,a_k)\,:\,k=1,\dots,n\}$ be a set of non-zero $\chi$-symmetric vectors for a $\Gamma$-gain framework $\mathcal{G}_0=(G_0,m,\varphi,\tau)$  with distinct $\chi_k\,,\,k=1,\dots,n$.
Then $Z$ is a linearly independent set.
\end{Proposition}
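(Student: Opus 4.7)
The approach is the standard one for proving linear independence of characters, lifted through the twist. Suppose we have a relation $\sum_{k=1}^{n} c_k\, z(\chi_k,a_k) = 0$ in $\ell^\infty(\Gamma, X^{V_0})$ with $c_k\in\mathbb{C}$. Using the explicit formula \eqref{eqn:chi-sym}, this says that for every $\gamma\in\Gamma$ and every vertex $v\in V_0$,
\begin{equation*}
\sum_{k=1}^{n} c_k \,\chi_k(\gamma)\, d\tau(\gamma)\, a_{k,v} \;=\; 0.
\end{equation*}
Since $d\tau(\gamma)\in U(X)$ is invertible by Lemma \ref{l:dt}, we can apply $d\tau(-\gamma)=d\tau(\gamma)^{-1}$ and pull it out of the sum, obtaining
\begin{equation*}
\sum_{k=1}^{n} c_k \,\chi_k(\gamma)\, a_{k,v} \;=\; 0, \qquad \forall\,\gamma\in\Gamma,\ \forall\, v\in V_0.
\end{equation*}

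For each fixed $v$, this is a linear dependence of the distinct characters $\chi_1,\ldots,\chi_n\in\hat{\Gamma}$ with scalar coefficients $c_k a_{k,v}\in\mathbb{C}$. By the classical linear independence of characters (Dedekind's lemma applied to the abelian group $\Gamma$), each coefficient must vanish, i.e.\ $c_k\,a_{k,v}=0$ for every $k$ and every $v$.

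Finally, since $z(\chi_k,a_k)\neq 0$ and $T_{\tilde{\tau}}$ is an isometric isomorphism, the vector $\chi_k\otimes a_k$ is non-zero, which forces $a_k\neq 0$; hence there is some $v\in V_0$ with $a_{k,v}\neq 0$. From $c_k a_{k,v}=0$ we conclude $c_k=0$ for each $k$, establishing linear independence. The only non-routine step is the appeal to linear independence of characters, but this is entirely standard and poses no real obstacle once the twist has been stripped away via invertibility of $d\tau(\gamma)$.
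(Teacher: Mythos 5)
Your proof is correct and follows essentially the same route as the paper: you strip the twist (pointwise via invertibility of $d\tau(\gamma)$, which is exactly the statement that $T_{\tilde{\tau}}$ is an isomorphism) and then invoke the classical linear independence of distinct characters, reducing to components $a_{k,v}$ rather than the vectors $a_k$, which is an immaterial difference.
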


    \proof
 Suppose that there exist $\lambda_k\in\bC\,,\,k=1,\dots,n$, such that
$$ \sum_{k=1}^n\lambda_k z(\chi_k,a_k)=0$$
or equivalently,
$$ \sum_{k=1}^n\lambda_k T_{\tilde{\tau}}(\chi_k\otimes a_k)(\gamma)=0, \text{ for every }\gamma\in \Gamma.$$
Since $T_{\tilde{\tau}}$ is an isomorphism of $\ell^\infty(\Gamma,X^{V_0})$, it follows that 
 $$ \sum_{k=1}^n\lambda_k \chi_k(\gamma) a_k=0, \text{ for every }\gamma\in \Gamma.$$
Hence the proof follows from the linear independence of the set of characters $\{\chi_k\,:\, k=1\,\dots,n\}$.
    \endproof

\begin{Corollary}\label{c:infdim}
   Let $\G_0=(G_0,m,\varphi,\tau)$ be a $\Gamma$-gain framework.
   If $\Omega(\G_0)$ is an infinite set then the space of bounded infinitesimal flexes of $\G_0$ is infinite dimensional.
\end{Corollary}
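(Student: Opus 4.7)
The plan is to construct an explicit infinite linearly independent set of bounded infinitesimal flexes by combining Lemma \ref{l:kernel} with Proposition \ref{l:li}. Since $\Omega(\G_0)$ is infinite, I can select a countably infinite sequence of pairwise distinct characters $\chi_1,\chi_2,\ldots\in\Omega(\G_0)$. By the definition of the RUM spectrum, for each $k$ the kernel $\ker O_{\G_0}(\chi_k)$ is nontrivial, so I can choose a nonzero vector $a_k\in\ker O_{\G_0}(\chi_k)$ for every $k$.

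Next I would apply Lemma \ref{l:kernel} to each pair $(\chi_k,a_k)$ to conclude that the $\chi_k$-symmetric vector $z(\chi_k,a_k)$ is a nonzero bounded infinitesimal flex of $\G_0$. This produces an infinite collection $Z=\{z(\chi_k,a_k):k\in\bN\}$ of $\chi$-symmetric flexes corresponding to pairwise distinct characters.

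Finally, Proposition \ref{l:li} (applied to every finite subfamily of $Z$) guarantees that $Z$ is a linearly independent subset of $\ell^\infty(\Gamma,X^{V_0})$ consisting entirely of bounded infinitesimal flexes of $\G_0$. Hence the space of bounded infinitesimal flexes contains an infinite linearly independent set and is therefore infinite dimensional.

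Since each ingredient has already been established, I do not anticipate any real obstacle here; the only point requiring a moment's care is that Proposition \ref{l:li} is stated for finite families, so I would explicitly note that linear independence of a (potentially) infinite set follows from the linear independence of all its finite subfamilies.
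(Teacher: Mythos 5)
Your proposal is correct and is essentially the paper's argument: the paper's proof is just ``Apply Lemma \ref{l:li}'', with exactly the construction you spell out (distinct characters from $\Omega(\G_0)$, nonzero kernel vectors, Lemma \ref{l:kernel} to produce $\chi$-symmetric flexes, and Proposition \ref{l:li} for linear independence of each finite subfamily). Your explicit remark about reducing infinite linear independence to finite subfamilies is a fair point of care that the paper leaves implicit.
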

\proof
Apply Lemma \ref{l:li}.
\endproof

\begin{Remark}
    It is unknown to the authors if the reverse implication in Corollary \ref{c:infdim} is true. A related result that is known is that the geometric flex spectrum of a periodic bar-and-joint framework in Euclidean space is infinite whenever the space of infinitesimal flexes is infinite dimensional \cite[Theorem 3.7]{kas-pow}. The proof of this result uses spectral synthesis techniques and it would be interesting to know if it could be proved using more direct methods.  
\end{Remark}

\subsection{Joint spectral $\chi$-symmetric vectors}
Let $\mathcal{G}_0=(G_0,m,\varphi,\tau)$ be a $\Gamma$-gain framework and let $T=(d\tau(\gamma_1),\ldots,d\tau(\gamma_n))\in B(X)^n$ be a  generating $n$-tuple for the group $d\tau(\Gamma)$.
A $\chi$-symmetric vector for $\G_0$ of the form $z(\bar{\chi}_\lambda,[a\cdots a]^\intercal)$, where  $\lambda\in \sigma(T)$ is a joint eigenvalue for $T$ with a joint eigenvector $a\in X$, is referred to as a {\em joint  spectral $\chi$-symmetric vector} for $\G_0$.

\begin{Lemma}
\label{l:trivial}
Let $z(\bar{\chi}_\lambda,[a \,\cdots\, a]^\intercal)$ be a joint spectral $\chi$-symmetric vector for a $\Gamma$-gain framework $\mathcal{G}_0=(G_0,m,\varphi,\tau)$. 
Then,
\begin{enumerate}[(i)]
\item $z(\bar{\chi}_\lambda,[a \,\cdots\, a]^\intercal)$ is a $\chi$-symmetric flex of $\G_0$,  and,
\item for each $\gamma\in\Gamma$, $z(\bar{\chi}_\lambda,[a \,\cdots\, a]^\intercal)(\gamma) = [a \,\cdots\, a]^\intercal$.
\end{enumerate}
\end{Lemma}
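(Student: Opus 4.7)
The statement splits into two essentially independent claims, and both fall directly out of machinery already in place.

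For part (i), my plan is to invoke Theorem \ref{t:eigen} and Lemma \ref{l:kernel} in sequence. Theorem \ref{t:eigen} already establishes that whenever $\lambda=(\lambda_1,\ldots,\lambda_n)\in\sigma(T)$ has joint eigenvector $a$, the column $[a\,\cdots\,a]^\intercal$ lies in $\ker O_{\G_0}(\bar{\chi}_\lambda)$, and in particular $\bar{\chi}_\lambda\in\Omega_{js}(\tau)\subseteq\Omega(\G_0)$. Since $a\neq 0$, the equivalence in Lemma \ref{l:kernel} then converts kernel membership into the assertion that $z(\bar{\chi}_\lambda,[a\,\cdots\,a]^\intercal)$ is a $\chi$-symmetric flex of $\G_0$. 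Nothing new needs to be computed.

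For part (ii), the plan is to unravel formula (\ref{eqn:chi-sym}) and show the scalar factor collapses to $1$. By definition,
\[
z(\bar{\chi}_\lambda,[a\,\cdots\,a]^\intercal)(\gamma) = (\bar{\chi}_\lambda(\gamma)\,d\tau(\gamma)a)_{v\in V_0},
\]
so it suffices to prove $\bar{\chi}_\lambda(\gamma)\,d\tau(\gamma)a = a$ for every $\gamma\in\Gamma$. Fix $\gamma$ and expand $d\tau(\gamma)=d\tau(\gamma_1)^{j_1}\cdots d\tau(\gamma_n)^{j_n}$ for some $j_1,\ldots,j_n\in\mathbb{Z}$, which is possible because the $n$-tuple $T$ generates $d\tau(\Gamma)$. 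Since $a$ is a joint eigenvector for the pairwise commuting operators $d\tau(\gamma_i)$, iterated application gives $d\tau(\gamma)a=\lambda_1^{j_1}\cdots\lambda_n^{j_n}\,a$. On the character side, Lemma \ref{lem:char} gives $\chi_\lambda=\psi_\lambda\circ d\tau$, and multiplicativity of $\psi_\lambda$ yields $\chi_\lambda(\gamma)=\psi_\lambda(d\tau(\gamma_1))^{j_1}\cdots\psi_\lambda(d\tau(\gamma_n))^{j_n}=\lambda_1^{j_1}\cdots\lambda_n^{j_n}$. Since $\chi_\lambda\in\hat{\Gamma}$ takes values in $\mathbb{T}$, one has $\bar{\chi}_\lambda(\gamma)\chi_\lambda(\gamma)=1$, and the two scalar factors cancel to leave $a$, as required.

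I do not anticipate a serious obstacle: the statement is essentially a verification. The only point to be careful about is using the same integer exponents $(j_1,\ldots,j_n)$ on the operator side and the character side, so that cancellation is exact; this is the same bookkeeping already carried out in the proof of Theorem \ref{t:eigen}, and appealing to that proof (rather than redoing it) keeps the argument crisp.
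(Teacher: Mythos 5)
Your proposal is correct and follows essentially the same route as the paper: part (i) is cited from Theorem \ref{t:eigen} together with Lemma \ref{l:kernel}, and part (ii) is the same computation, expanding $d\tau(\gamma)$ in the generators, using the joint eigenvector relation to get $d\tau(\gamma)a=\lambda_1^{j_1}\cdots\lambda_n^{j_n}a$, and cancelling against $\bar{\chi}_\lambda(\gamma)$ (the paper writes this as $\bar{\lambda}^\gamma\lambda^\gamma a=a$, while you phrase the same cancellation via $\bar{\chi}_\lambda(\gamma)\chi_\lambda(\gamma)=1$ and Lemma \ref{lem:char}).
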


\proof
Statement $(i)$ follows from Theorem \ref{t:eigen} and Lemma \ref{l:kernel}. To prove statement $(ii)$, let $T=(d\tau(\gamma_1),\ldots,d\tau(\gamma_n))$ be a  generating $n$-tuple for the group $d\tau(\Gamma)$ such that $\lambda\in \sigma(T)$ and $a$ is a joint eigenvector for $\lambda$.
For each $\gamma\in\Gamma$, there exist $j_1,\ldots,j_n\in\mathbb{Z}$
with $d\tau(\gamma)=d\tau(\gamma_1)^{j_1}\cdots d\tau(\gamma_n)^{j_n}$.
Write $\lambda^{\gamma}:=\lambda_1^{j_1}\cdots \lambda_n^{j_n}$.
Then, $$\bar{\chi}_\lambda(\gamma)d\tau(\gamma)a=\bar{\lambda}^\gamma\lambda^\gamma a=a.$$
Thus the result follows from Equation \ref{eqn:chi-sym}.
\endproof
 
A vector $f\in\ell^\infty(\Gamma,X^{V_0})$ is referred to as a {\em translation} of $\G_0$ if there exists $a\in X$ such that $f(\gamma) = [a \,\cdots\, a]^\intercal$ for all $\gamma\in \Gamma$. Note that every translation of $\G_0$ is a bounded infinitesimal flex of $\G_0$.
The vector space of all translations of $\mathcal{G}_0$ is denoted $Z_{\mathcal{G}_0}$ and referred to as the {\em translation space} for $\mathcal{G}_0$.

\begin{Theorem}\label{t:transpace}
    Let $\mathcal{G}_0=(G_0,m,\varphi,\tau)$ be a $\Gamma$-gain framework. The linear span of the joint spectral $\chi$-symmetric vectors for $\G_0$ is the translation space $Z_{\G_0}$. 
\end{Theorem}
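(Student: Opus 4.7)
}

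The plan is to establish the two containments separately. The inclusion that the span of joint spectral $\chi$-symmetric vectors sits inside $Z_{\G_0}$ is immediate from Lemma \ref{l:trivial}(ii): each such vector $z(\bar{\chi}_\lambda,[a\,\cdots\,a]^\intercal)$ satisfies $z(\bar{\chi}_\lambda,[a\,\cdots\,a]^\intercal)(\gamma)=[a\,\cdots\,a]^\intercal$ for every $\gamma\in\Gamma$, hence is a translation; the span of translations is again the translation space $Z_{\G_0}$.

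The reverse inclusion is the substantive step and will be carried out by exhibiting a basis of joint eigenvectors in $X$. Fix a generating $n$-tuple $T=(d\tau(\gamma_1),\ldots,d\tau(\gamma_n))$ for the group $d\tau(\Gamma)$. By Lemma \ref{l:dt}, each $d\tau(\gamma_j)$ is a unitary operator on the finite dimensional Hilbert space $X$, and by construction the operators in $T$ pairwise commute. A finite family of pairwise commuting unitaries on a finite dimensional Hilbert space is simultaneously unitarily diagonalizable, so $X$ admits an orthonormal basis $\{a_1,\ldots,a_d\}$ consisting of joint eigenvectors for $T$, with corresponding joint eigenvalues $\lambda^{(1)},\ldots,\lambda^{(d)}\in\sigma(T)$.

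Given an arbitrary translation $f\in Z_{\G_0}$, by definition there exists $a\in X$ with $f(\gamma)=[a\,\cdots\,a]^\intercal$ for all $\gamma\in\Gamma$. Expanding $a=\sum_{k=1}^d c_k a_k$ in the joint eigenbasis and using Lemma \ref{l:trivial}(ii) coordinate-wise, I would conclude
\begin{equation*}
f(\gamma)=[a\,\cdots\,a]^\intercal=\sum_{k=1}^d c_k\,[a_k\,\cdots\,a_k]^\intercal=\sum_{k=1}^d c_k\, z(\bar{\chi}_{\lambda^{(k)}},[a_k\,\cdots\,a_k]^\intercal)(\gamma),
\end{equation*}
valid for every $\gamma\in\Gamma$. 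Hence $f$ lies in the linear span of the joint spectral $\chi$-symmetric vectors for $\G_0$, completing the reverse inclusion.

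I do not expect any serious obstacle. The only point worth double-checking is the simultaneous diagonalizability of $T$: this is where finite dimensionality of $X$ and the fact that $\tau$ takes values in $\Isom(X)$ (so that $d\tau(\gamma)$ is unitary, not merely invertible) are both used in an essential way, since commuting invertible operators on a finite dimensional space need not be simultaneously diagonalizable in general. Once a joint eigenbasis is in hand, the decomposition step is purely formal.
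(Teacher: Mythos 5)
Your proposal is correct and follows essentially the same route as the paper: both arguments produce a basis of $X$ consisting of joint eigenvectors for the commuting unitaries $d\tau(\gamma_1),\ldots,d\tau(\gamma_n)$ (the paper constructs it recursively via Lemma \ref{l:js}$(i)$ and reducing subspaces, you invoke simultaneous diagonalizability of commuting unitaries directly, which amounts to the same argument) and then identify the corresponding joint spectral $\chi$-symmetric vectors with translations via Lemma \ref{l:trivial}. The only cosmetic difference is in how the reverse inclusion is closed: the paper counts dimensions ($d$ linearly independent translations inside the $d$-dimensional space $Z_{\G_0}$), whereas you expand an arbitrary translation explicitly in the joint eigenbasis; both are valid.
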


\proof
 Let $T=(d\tau(\gamma_1),\ldots,d\tau(\gamma_n))\in B(X)^n$ be a generating $n$-tuple for $d\tau(\Gamma)$ and let $d$ be the dimension of the linear space $X$.
  By Lemma \ref{l:js}$(i)$, there exists a joint eigenvalue $\lambda_1\in \bC^n$ for $T$ with joint eigenvector $a_1\in X$.  Since $T$ is an $n$-tuple of pairwise commuting unitaries, it follows that the spanning space $V_1=\operatorname{span}\{a_1\}$ is a reducing subspace of $X$ for each $d\tau(\gamma_j)$. Let $V_1^\perp$ denote the orthogonal complement of $V_1$. If $V_1^\perp$ is not the trivial subspace, then the restriction of $T$ to $V_1^\perp$ is again an $n$-tuple of pairwise commuting unitaries and so Lemma \ref{l:js}$(i)$ is again applicable. In particular, there exists a joint eigenvalue $\lambda_2\in \bC^n$ for $T$ with  eigenvector $a_2\in V_1^\perp$. Working recursively, we obtain a sequence of (not necessarily distinct) joint eigenvalues $(\lambda_k)_{k=1}^d$ and associated joint eigenvectors $(a_k)_{k=1}^d$ that span $X$.  By Lemma \ref{l:trivial}, each joint spectral $\chi$-symmetric vector $z(\bar{\chi}_{\lambda_k},[a_k \,\cdots\, a_k]^\intercal)$ is a translation of $\mathcal{G}_0$. The joint eigenvectors $(a_k)_{k=1}^d$ are pairwise orthogonal and so the set of joint spectral $\chi$-symmetric vectors, 
  $$\{z(\bar{\chi}_{\lambda_k},[a_k \,\cdots\, a_k]^\intercal)\,:\, k=1,\dots, d\}$$ 
  is linearly independent. Since $Z_{\G_0}$ has dimension $d$ and contains the above set, the result follows.
\endproof


\section{Applications to rigidity and flexibility}
\label{s:applications}
In this section, we demonstrate applications of the theory developed thus far to the analysis of infinitesimal flexes for symmetric bar-and-joint frameworks. For an introduction to rigidity theory for finite symmetric bar-and-joint frameworks in Euclidean space we refer the reader to \cite{sch-whi}. For a treatment of periodic bar-and-joint frameworks in Euclidean space see \cite{owe-pow-crystal}. For an introduction to rigidity theory for bar-and-joint frameworks in finite dimensional normed linear spaces see \cite{kit-pow-2,kit-sch1}.  


 \subsection{Symmetric frameworks}
 \label{symfram}
Let $G=(V,E)$ be a simple undirected graph and $\varphi=(\varphi_{v,w})_{vw\in E}$ be a family of linear transformations $\varphi_{v,w}:X\to Y$ between finite dimensional real Hilbert spaces $X$ and $Y$  with the property that 
$\varphi_{v,w}=-\varphi_{w,v}$, for each edge $vw\in E$.
We refer to the pair $(G,\varphi)$ as a {\em  framework} (for $X$ and $Y$).

\begin{Example}
\label{Ex:frameworkA}
Let $p:V\to \mathbb{R}^d$, $p(v)=p_v$, be an assignment of points in $\mathbb{R}^d$ to the vertices of a graph $G=(V,E)$ with $p_v\not=p_w$ for each edge $vw\in E$.
Let $\|\cdot\|$ be a norm on $\mathbb{R}^d$ and suppose that, for each edge $vw\in E$,  the norm $\|\cdot\|$ is smooth at the point $p_v-p_w$.
Then the directional derivatives of the norm $\|\cdot\|$ at $p_v-p_w$ define a linear functional $\varphi_{v,w}:\mathbb{R}^d\to\mathbb{R}$ where,
\begin{eqnarray}
\label{eqn:functional}
    \varphi_{v,w}(u) = \lim_{t\to 0} \frac{1}{t}\left(\|p_v-p_w+tu\|-\|p_v-p_w\|\right).
\end{eqnarray} 

The L\"{o}wner ellipsoid $\E$ for the unit ball $B=\{x\in\mathbb{R}^d:\|x\|\leq1\}$ is the unique ellipsoid of minimal volume which contains $B$ (see \cite[\S3.3]{thom}).
Let $X_\E=(\mathbb{R}^d, \|\cdot\|_\E)$ be the real Hilbert space with unit ball $\E$. 
 Then the pair $(G,\varphi)$ is a framework (for the real Hilbert spaces $X_\E$ and $\mathbb{R}$).
 We refer to $(G,\varphi)$ as a {\em bar-and-joint framework} in the normed space $(\mathbb{R}^d, \|\cdot\|)$.
 
Consider now a collection $(\alpha_v)_{v\in V}$ of smooth paths $\alpha_{v}:[-1,1]\to\mathbb{R}^d$ such that $\alpha_{v}(0)=p_v$ for each vertex $v\in V$ and $\|\alpha_v(t)-\alpha_w(t)\|=\|p_v-p_w\|$ for each $t\in[-1,1]$ and each edge $vw\in E$. This edge-length preserving smooth motion gives rise to a family of velocity vectors $(\alpha_v'(0))_{v\in V}$ which satisfy the {\em flex condition} $\varphi_{v,w}(\alpha_v'(0)-\alpha_w'(0))=0$ for each $vw\in E$.
 A vector $u=(u_v)_{v\in V}\in (\mathbb{R}^d)^V$ such that $\varphi_{v,w}(u_v-u_w)=0$ for each edge $vw\in E$ is referred to as an {\em infinitesimal} (or {\em first-order}) {\em flex} of the bar-and-joint framework $(G,\varphi)$.

 In the case of bar-joint frameworks in $d$-dimensional Euclidean space $(\mathbb{R}^d, \|\cdot\|_2)$ it is common practice to work instead with the linear functionals,
 \begin{eqnarray}
\label{eqn:functional2}
\varphi_{v,w}(u) = \|p_v-p_w\|_2\left(\lim_{t\to 0} \frac{1}{t}\left(\|p_v-p_w+tu\|_2-\|p_v-p_w\|_2\right)\right) = (p_v-p_w)\cdot u.
\end{eqnarray}
 The space of infinitesimal flexes for the resulting bar-and-joint framework will be the same regardless of whether we use the linear functionals defined by (\ref{eqn:functional}) or (\ref{eqn:functional2}). 
 Also, note that in this case the associated real Hilbert space $X_\E$ is simply the Euclidean space $(\mathbb{R}^d, \|\cdot\|_2)$.
\end{Example}

Let $\Gamma$ be an additive discrete abelian group. 
A  framework $(G,\varphi)$ (for a pair of real Hilbert spaces $X$ and $Y$) is {\em $\Gamma$-symmetric} if there exist group homomorphisms $\theta:\Gamma\to \Aut(G)$ and $\tau:\Gamma\to \Isom (X)$ such that,
\begin{eqnarray}
\label{e:sym}
    \varphi_{\gamma v, \gamma w} = \varphi_{v,w}\circ d\tau(-\gamma), \quad \forall\,\gamma\in \Gamma.
\end{eqnarray}
Here $\Aut(G)$ denotes the automorphism group of the graph $G$.
For convenience, we write $\gamma v$ instead of $\theta(\gamma)v$ for each $\gamma\in\Gamma$ and each $v\in V$.
We also write $\gamma e$ instead of $(\gamma v)(\gamma w)$ for each edge $e=vw\in E$.
It will be assumed throughout that $\theta$ acts {\em freely} on the vertices and edges of $G$ in the sense that  $\gamma v \neq v$ and $\gamma e \neq e$ for all $\gamma\in\Gamma \backslash \{0\}$ and for all vertices $v\in V$ and edges $e\in E$.
 We denote this $\Gamma$-symmetric framework by $\G=(G,\varphi,\theta,\tau).$ 

\begin{Example}
\label{Ex:frameworkB}
Let $(G,\varphi)$ be a bar-and-joint framework in the normed space $(\mathbb{R}^d, \|\cdot\|)$ arising from an assignment  $p:V\to \mathbb{R}^d$ as in Example \ref{Ex:frameworkA}. Let $\Gamma$ be a discrete abelian group. Suppose there exist group homomorphisms $\theta:\Gamma\to \Aut(G)$ and $\tau:\Gamma\to \Isom (\mathbb{R}^d, \|\cdot\|)$ such that,
$$p_{\gamma v} = \tau(\gamma)p_v,\quad \forall\,v\in V,\,\gamma\in\Gamma.$$
Note that $\Isom (\mathbb{R}^d, \|\cdot\|)$ is a subgroup of $\Isom(X_\E)$ (\cite[Corollary 3.3.4]{thom}).
The linear functionals $\varphi=(\varphi_{v,w})_{vw\in E}$ satisfy Equation \ref{e:sym}
and so $\mathcal{G}=(G,\varphi,\theta,\tau)$ is a $\Gamma$-symmetric framework.
\end{Example}


\subsection{Gain frameworks}
Let $\G=(G, \varphi, \theta, \tau)$ be a $\Gamma$-symmetric framework for a pair of real Hilbert spaces $X$ and $Y$. We construct an associated $\Gamma$-gain framework $\G_0=(G_0, m, \varphi, \tau)$ as follows: 
\begin{enumerate}[(i)]
\item Let $G_0=(V_0, E_0)$ be the quotient graph $G/\sim$ where, for each pair of vertices  $v,w$ in $G$, we write  $v\sim w$ if and only if $v=\gamma w$ for some $\gamma\in\Gamma$.
\item For each vertex orbit $[v]\in V_0$, choose a representative vertex $\tilde{v}\in[v]$ and denote the set of all such representatives by $\tilde{V}_0$.
\item Fix an orientation on the edges of the quotient graph $G_0$. Abusing notation, we denote by $G_0=(V_0,E_0)$ the resulting directed graph. 
\item For each directed edge $[e]=([v],[w])\in E_0$, there exists a unique group element $m_{[e]}\in\Gamma$ such that $\tilde{v}(m_{[e]}\tilde{w})\in [e]$. Let $m:E_0\to \Gamma$ be the assignment $m([e])=m_{[e]}$.
\item Denote by $X_\mathbb{C}$ and $Y_\mathbb{C}$ the complexifications for the real Hilbert spaces $X$ and $Y$.
Let $\varphi=(\varphi_{[e]})_{[e]\in E_0}$ be a collection of linear transformations from $X_\mathbb{C}$ to $Y_\mathbb{C}$ where, for each directed edge $[e]=([v],[w])\in E_0$, we set $\varphi_{[e]}$ to be the complexification of the linear transformation $\varphi_{\tilde{v},m_{[e]}\tilde{w}}$. 
\end{enumerate}
We refer to $\G$ as the {\em covering framework} for the $\Gamma$-gain framework $\G_0$. The graph $G$ is called the {\em covering graph} for the $\Gamma$-gain graph  $(G_0,m)$. The uniqueness of the covering framework for a $\Gamma$-gain framework follows from the assumption that $\theta$ acts freely. 
The mapping $m:[e]\to m_{[e]}$ is referred to as a {\em gain} on $G_0$. Note that since $G$ is a simple graph, the gain $m$ satisfies the following two properties:
\begin{enumerate}[(i)]
 \item  If $[e],[e']$ are 
 parallel edges in $G_0$ with the same orientation then $m_{[e]} \neq m_{[e']}$;
\item $m_{[e]} \neq 0$, for every loop $[e]\in E_0$.
\end{enumerate}

\begin{Remark}
As discussed in Section \ref{symfram}, much of the research in rigidity theory focuses on bar-and-joint frameworks $(G,\varphi)$ with linear functionals $\varphi_{v,w}$ acting on a finite dimensional real Hilbert space $X$. In such cases, given an infinitesimal flex $u$  of the associated gain framework $\G_0$, both the real part $\operatorname{Re}u$ and the imaginary part $\operatorname{Im}u$ are also infinitesimal flexes of $\G_0$.
\end{Remark}

\begin{figure}
\hspace{-3cm}
\begin{tikzpicture}
    \node[circle, draw, fill=black, inner sep=1.5pt, label=below:$v$] (v) at (0,0) {};

    \draw[postaction={decorate,decoration={markings,mark=at position 0.5 with {\arrow{stealth}}}}]
        (v) to[out=35, in=145, looseness=90] node[midway, above] {$(1,1)$} (v);
    \draw[postaction={decorate,decoration={markings,mark=at position 0.5 with {\arrow{stealth}}}}] 
        (v) to[out=35, in=145, looseness=40] node[midway, above] {$(0,1)$} (v);

\end{tikzpicture}
\begin{tikzpicture}
    \node[circle, draw, fill=black, inner sep=1.5pt] (v) at (3,0,0) {};
    \node[circle, draw, fill=black, inner sep=1.5pt] (v) at (0,0,0) {};
    \node[circle, draw, fill=black, inner sep=1.5pt] (v) at (1.5,2,0) {};
    \node[circle, draw, fill=black, inner sep=1.5pt] (v) at (3,0,3) {};
    \node[circle, draw, fill=black, inner sep=1.5pt] (v) at (0,0,3) {};
    \node[circle, draw, fill=black, inner sep=1.5pt] (v) at (1.5,2,3) {};
    \coordinate (A) at (3,0,0);
    \coordinate (B) at (0,0,0);
    \coordinate (C) at (1.5,2,0);
    \coordinate (D) at (3,0,3);
    \coordinate (E) at (0,0,3);
    \coordinate (F) at (1.5,2,3);

    \coordinate (G) at (barycentric cs:A=1,B=1,C=1);  
    \coordinate (H) at (barycentric cs:D=1,E=1,F=1);  

    \draw[thick] (A) -- (B) -- (C) -- cycle;

    \draw[thick] (D) -- (E) -- (F) -- cycle;

    \draw[thick,blue] (A) -- (E);
    \draw[thick,cyan] (B) -- (F);
    \draw[thick,violet] (C) -- (D);
    \draw[thick,violet] (A) -- (F);
    \draw[thick,blue] (B) -- (D);
    \draw[thick,cyan] (C) -- (E);

    \node[below right] at (A) {$v_{1,2}$};
    \node[below] at (B) {$v_{1,0}$};
    \node[above] at (C) {$v_{1,1}$};
    \node[below right] at (D) {$v_{0,2}$};
    \node[below] at (E) {$v_{0,0}$};
    \node[above left] at (F) {$v_{0,1}$};
\end{tikzpicture}\hspace{-1.5cm}

\caption{The $\Gamma$-gain graph (left) and  covering graph (right) in Example \ref{Ex:2loops}.}
\label{fig:triangularprism}
\end{figure}
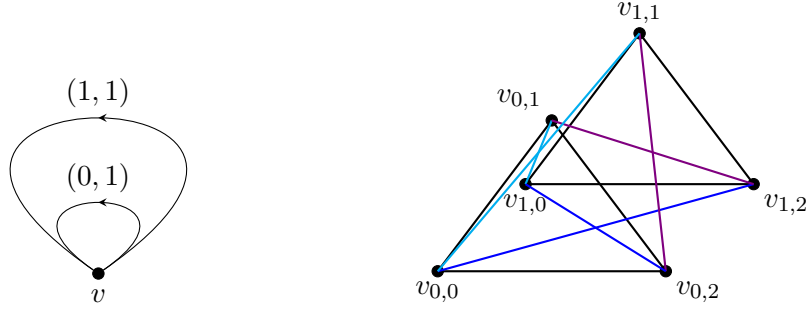


\subsection{$C_{3h}$-symmetric frameworks}
In this section we consider frameworks which exhibit $C_{3h}$ symmetry. 
We use here the standard Schoenflies notation whereby $C_{3h}$ is a group generated by a rotation of order $3$ and a reflection in a plane which is orthogonal to the axis of rotation. 

Let $G_0=(V_0,E_0)$ be the directed multigraph with a single vertex $v$ and two loops $e_1,e_2$. Let $\Gamma = \mathbb{Z}_2\times\mathbb{Z}_3$ and define $m:E_0\to\Gamma$ by setting  $m(e_1)=(0,1)$ and $m(e_2)=(1,1)$. 
 The $\Gamma$-gain graph $(G_0, m)$ and its covering graph are illustrated in Figure \ref{fig:triangularprism}.
   Note that the covering graph $G=(V,E)$ is the octahedral graph with vertex set $V=V_0\times\Gamma$. For convenience, we write $v_{0,0} := (v,(0,0))$ and $v_{l,m}:=\theta(l,m)v_{0,0}=(v,(l,m))$, for each $(l,m)\in \Gamma$. 
   
 Define a group homomorphism $\tau:\Gamma\to \Isom(\mathbb{C}^3)$ by setting $\tau(1,0)$ to be the complexification of orthogonal reflection in the $xy$-plane and $\tau(0,1)$ to be the complexification of clockwise rotation by $\frac{2\pi}{3}$ about the $z$-axis,
$$\tau(1,0) = \begin{bmatrix}
1 &0&0\\
0&1&0\\
0&0&-1
\end{bmatrix},
\qquad
\tau(0,1) = \begin{bmatrix}
-1/2&\sqrt{3}/2 & 0\\
-\sqrt{3}/2&-1/2 & 0\\
0&0&1
\end{bmatrix}.$$ 
Note that $d\tau = \tau$.

Recall that the dual group $\hat{\Gamma}$ is the multiplicative group of characters $\chi_{j,k}:\mathbb{Z}_2\times\mathbb{Z}_3\to\mathbb{T}$, $\chi_{j,k}(l,m)=(-1)^{jl}\eta^{km}$, where $\eta=e^{2\pi i/3}$, $j\in\mathbb{Z}_2$ and $k\in \mathbb{Z}_3$. 

\begin{Example}
\label{Ex:2loops}
Let $\G_0=(G_0,m,\varphi,\tau)$ be a $\Gamma$-gain framework where  $\varphi=(\varphi_{e_1},\varphi_{e_2})$ is an arbitrary pair of linear transformations from $\mathbb{C}^3$ to $\mathbb{C}$.
For $\chi_{j,k}\in\hat{\Gamma}$, the corresponding orbit matrix is,
\begin{equation}
\label{Ex:2loopsOrbit}
O_{\G_0}(\chi_{j,k})
=\begin{bmatrix} 
\varphi_{e_1}\circ(I - \eta^k \tau(0,1)) \\
\varphi_{e_2}\circ(I - (-1)^j\eta^{k} \tau(1,1)) 
\end{bmatrix}
\end{equation}
By the rank-nullity theorem, $\ker O_{\G_0}(\chi_{j,k})$ is non-trivial for each character $\chi_{j,k}$ and so $\Omega(\G)=\hat{\Gamma}$. 

Let $T=(\tau(1,0),\tau(0,1))$. Note that $T$ is a generating pair for the group  $d\tau(\Gamma)$ and $\sigma(T) = \{(-1,1),(1,\eta),(1,\bar{\eta})\}$.
In particular, $\lambda_1=(-1,1)$ is a joint eigenvalue for $T$ with joint eigenvector $a_{\lambda_1}=(0,0,1)$, $\lambda_2=(1,\eta)$ is a joint eigenvalue for $T$ with joint eigenvector $a_{\lambda_2}=(-i,1,0)$ and $\lambda_3=(1,\bar{\eta})$ is a joint eigenvalue for $T$ with joint eigenvector $a_{\lambda_3}=(i,1,0)$. 
The corresponding characters satisfy, 
$$\bar{\chi}_{\lambda_1}(l,m) = (-1)^l=\chi_{1,0}(l,m),
\quad \bar{\chi}_{\lambda_2}(l,m) = \bar{\eta}^m=\chi_{0,2}(l,m),
\quad \bar{\chi}_{\lambda_3}(l,m) = \eta^{m}=\chi_{0,1}(l,m).$$ 
 In particular,  the joint spectral points in the RUM spectrum are $\Omega_{js}(\tau)=\{\chi_{1,0},\chi_{0,2},\chi_{0,1}\}$.
 
By Lemma \ref{l:trivial}, the corresponding joint spectral $\chi$-symmetric vectors for $\G_0$ are the translations:
\begin{itemize}
    \item $z(\bar{\chi}_{\lambda_1},a_{\lambda_1})(\gamma)=z(\chi_{1,0},a_{\lambda_1})(\gamma) = a_{\lambda_1}$.
\item $z(\bar{\chi}_{\lambda_2},a_{\lambda_2})(\gamma)=z(\chi_{0,2},a_{\lambda_2})(\gamma)=a_{\lambda_2}$.
\item $z(\bar{\chi}_{\lambda_3},a_{\lambda_3})(\gamma)=z(\chi_{0,1},a_{\lambda_3})(\gamma)=a_{\lambda_3}$.
\end{itemize}
 As shown in Theorem \ref{t:transpace}, the above joint spectral $\chi$-symmetric vectors for $\G_0$ span the $3$-dimensional translation space $Z_{\G_0}$.
\end{Example}

The joint spectral $\chi$-symmetric vectors in the above example are  independent of the choice of linear transformations $\varphi=(\varphi_{e_1},\varphi_{e_2})$. 
The remaining $\chi$-symmetric vectors depend on  $\varphi$ and to illustrate this we describe two contrasting examples of $\Gamma$-symmetric bar-and-joint frameworks below. In both examples, the linear transformations $\varphi$ are derived from norm constraints as described in Example \ref{Ex:frameworkA} using the assignment $p:V\to \mathbb{R}^3$ with,
\begin{align*} 
p(v_{0,0}) &:=(-\sqrt{3},-1,1), & &p(v_{1,0}) = \tau(1,0)p(v_{0,0})=(-\sqrt{3},-1,-1),\\
p(v_{0,1}) &= \tau(0,1)p(v_{0,0})=(0,2,1), && p(v_{1,1}) = \tau(1,1)p(v_{0,0})=(0,2,-1),\\
p(v_{0,2})&=\tau(0,2)p(v_{0,2})=(\sqrt{3},-1,1), && p(v_{1,2})=\tau(1,2)p(v_{0,2})=(\sqrt{3},-1,-1).
\end{align*}
The bar-and-joint framework resembles a triangular prism with two triangular faces connected by three pairs of crossing bars. See the right hand image in Figure \ref{fig:triangularprism} for an illustration. 
The linear transformations $\varphi$ represent standard Euclidean norm constraints in Example \ref{Ex:2loopsA} and cylindrical norm constraints in Example \ref{Ex:2loopsB}.
 
\begin{Example}
\label{Ex:2loopsA}
Let $\G_0=(G_0,m,\varphi,\tau)$ be the $\Gamma$-gain framework with  linear transformations $\varphi=(\varphi_{e_1},\varphi_{e_2})$ given by,
\begin{align*}
\varphi_{e_1}(a) &= (p(v_{0,0})-\tau(0,1)p(v_{0,0}))\cdot a =(-\sqrt{3}, -3, 0)\cdot a,\\
\varphi_{e_2}(a) &= (p(v_{0,0})-\tau(1,1)p(v_{0,0}))\cdot a =(-\sqrt{3}, -3, 2)\cdot a,
\end{align*}
for all $a\in \mathbb{C}^3$. 
Using Equation \ref{Ex:2loopsOrbit}, the orbit matrices for $\G_0$ take the form,
 \begin{eqnarray*}
 O_{\G_0}(\chi_{j,k})  
 &=& \begin{bmatrix} 
 -\sqrt{3}(1+2\eta^{k}) & -3 & 0\\
 -\sqrt{3}(1+2(-1)^j\eta^{k}) & -3 & 2(1+(-1)^j\eta^{k}) \\
 \end{bmatrix}
 \end{eqnarray*}
 Note that $\dim \ker O_{\G_0}(\chi_{j,k}) =1$ for all $(j,k)\in\Gamma$.
 
 The vector $u_{0,0}=(1,-\sqrt{3},0)$ spans the kernel of $O_{\G_0}(\chi_{0,0})$. The $\chi$-symmetric vector $z(\chi_{0,0},u_{0,0})$ describes an anticlockwise rotation of the covering framework about the $z$-axis where, for $l=0,1$,
  $$z_{(l,0)}= \begin{bmatrix} 1\\ -\sqrt{3} \\ 0 \end{bmatrix},\qquad
  z_{(l,1)}= \begin{bmatrix} -2\\ 0 \\ 0 \end{bmatrix},\qquad
  z_{(l,2)}= \begin{bmatrix} 1\\ \sqrt{3} \\ 0 \end{bmatrix}.$$
 
 The vector $u_{1,1}=(i,1, -2\overline{\eta})$ spans the kernel of $O_{\G_0}(\chi_{1,1})$. The components of the $\chi$-symmetric vector $z(\chi_{1,1},u_{1,1})$ are,  
  $$z_{(0,0)}= \begin{bmatrix} i\\ 1 \\ -2\bar{\eta}  \end{bmatrix},\qquad
  z_{(0,1)}= \begin{bmatrix}  i\\ 1 \\ -2  \end{bmatrix},\qquad
  z_{(0,2)}= \begin{bmatrix}  i\\ 1 \\ -2\eta  \end{bmatrix}.$$
$$z_{(1,0)}= \begin{bmatrix} -i\\-1 \\ -2\bar{\eta}  \end{bmatrix},\qquad
  z_{(1,1)}= \begin{bmatrix} -i\\-1 \\ -2  \end{bmatrix},\qquad
  z_{(1,2)}= \begin{bmatrix} -i\\-1 \\ -2\eta  \end{bmatrix}.$$
  The real part of $z(\chi_{1,1},u_{1,1})$ describes a rotation of the covering framework about the $x$-axis. The imaginary part of  $z(\chi_{1,1},u_{1,1})$ describes a rotation of the covering framework about the $y$-axis. 
  
 The vector $u_{1,2}=(i,-1, 2\eta)$ spans the kernel of $O_{\G_0}(\chi_{1,2})$. The components of the $\chi$-symmetric vector $z(\chi_{1,2},u_{1,2})$ are,
 $$z_{(0,0)}= \begin{bmatrix} i\\-1 \\ 2\eta  \end{bmatrix},\qquad
  z_{(0,1)}= \begin{bmatrix} i\\ -1 \\ 2  \end{bmatrix},\qquad
  z_{(0,2)}= \begin{bmatrix} i\\ -1 \\ 2 \bar{\eta}  \end{bmatrix}.$$
$$z_{(1,0)}= \begin{bmatrix} -i\\ 1 \\ 2\eta  \end{bmatrix},\qquad
  z_{(1,1)}= \begin{bmatrix} - i\\ 1 \\ 2  \end{bmatrix},\qquad
  z_{(1,2)}= \begin{bmatrix} - i\\ 1 \\ 2 \bar{\eta}  \end{bmatrix}.$$
 The real part of $z(\chi_{1,2},u_{1,2})$ describes a rotation of the covering framework about the $x$-axis which is the inverse rotation to the real part of $z(\chi_{1,1},u_{1,1})$. The imaginary part of  $z(\chi_{1,2},u_{1,2})$ describes a rotation of the covering framework about the $y$-axis which is identical to the imaginary part of $z(\chi_{1,1},u_{1,1})$. 
\end{Example}

In the following example, $\ell_{2,\infty}^3$ denotes the cylindrical normed space $(\mathbb{R}^3,\|\cdot\|_{2,\infty})$ where,
$$\|(x,y,z)\|_{2,\infty} = \max\,\{\sqrt{x^2+y^2}, |z|\}.$$
\begin{Example}
\label{Ex:2loopsB}
Let $\G_0=(G_0,m,\varphi,\tau)$ be the $\Gamma$-gain framework with linear transformations $\varphi=(\varphi_{e_1},\varphi_{e_2})$ given by,
\begin{align*} 
\varphi_{e_1}(a) &= \pi_{xy}(p(v_{0,0})-\tau(0,1)p(v_{0,0}))\cdot a=(-\sqrt{3}, -3, 0)\cdot a,\\
\varphi_{e_2}(a) &= \pi_{z}(p(v_{0,0})-\tau(1,1)p(v_{0,0}))\cdot a=(0,0,2)\cdot a,
\end{align*}
 for each $a\in \mathbb{C}^3$, where $\pi_{xy}$ and $\pi_z$ are the projections,
 $$\pi_{xy}(a_1,a_2,a_3) = (a_1,a_2,0),\qquad \pi_z(a_1,a_2,a_3)=(0,0,a_3).$$ 
Note that $\tau(\gamma)$ is an isometry of the cylindrical space $\ell_{2,\infty}^3:=\mathbb{R}^2\oplus_\infty \mathbb{R}$ for each $\gamma\in\Gamma$.

Using Equation \ref{Ex:2loopsOrbit}, the orbit matrices for $\G$ take the form,
 \begin{eqnarray*}
 O_{\G_0}(\chi_{j,k})  
 &=& \begin{bmatrix} 
-\sqrt{3}(1+2\eta^{k}) & -3 & 0 \\
0 & 0 & 2(1+(-1)^j\eta^k) \\
 \end{bmatrix}
 \end{eqnarray*}
 Note that $\dim \ker O_{\G_0}(\chi_{j,k}) =1$ for all $(j,k)\in\Gamma\backslash\{(1,0)\}$, and  $\dim \ker O_{\G_0}(\chi_{1,0}) =2$.
 
 The vector $u_{0,0}=(1,-\sqrt{3},0)$ spans the kernel of $O_{\G_0}(\chi_{0,0})$. The $\chi$-symmetric vector $z(\chi_{0,0},u_{0,0})$ describes an anticlockwise rotation of the covering framework about the $z$-axis where, for $l=0,1$,
  $$z_{(l,0)}= \begin{bmatrix} 1\\ -\sqrt{3} \\ 0 \end{bmatrix},\qquad
  z_{(l,1)}= \begin{bmatrix} -2\\ 0 \\ 0 \end{bmatrix},\qquad
  z_{(l,2)}= \begin{bmatrix} 1\\ \sqrt{3} \\ 0 \end{bmatrix}.$$
 
 The vector $u_{1,1}=(i,1, 0)$ spans the kernel of $O_{\G_0}(\chi_{1,1})$. The components of the $\chi$-symmetric vector $z(\chi_{1,1},u_{1,1})$ are,  
  $$z_{(0,0)}= \begin{bmatrix} i\\ 1 \\ 0  \end{bmatrix},\qquad
  z_{(0,1)}= \begin{bmatrix}  i\\ 1 \\ 0  \end{bmatrix},\qquad
  z_{(0,2)}= \begin{bmatrix}  i\\ 1 \\ 0 \end{bmatrix}.$$
$$z_{(1,0)}= \begin{bmatrix} -i\\-1 \\ 0 \end{bmatrix},\qquad
  z_{(1,1)}= \begin{bmatrix} -i\\-1 \\ 0  \end{bmatrix},\qquad
  z_{(1,2)}= \begin{bmatrix} -i\\-1 \\ 0 \end{bmatrix}.$$
  The real part of $z(\chi_{1,1},u_{1,1})$ describes a shearing motion on the covering framework in which the triangles are translated in opposite directions parallel to the $y$-axis. The imaginary part of  $z(\chi_{1,1},u_{1,1})$ describes an analogous shearing motion parallel to the $x$-axis. 
  
 The vector $u_{1,2}=(i,-1, 0)$ spans the kernel of $O_{\G_0}(\chi_{1,2})$. The components of the $\chi$-symmetric vector $z(\chi_{1,2},u_{1,2})$ are,
 $$z_{(0,0)}= \begin{bmatrix} i\\-1 \\ 0 \end{bmatrix},\qquad
  z_{(0,1)}= \begin{bmatrix} i\\ -1 \\ 0  \end{bmatrix},\qquad
  z_{(0,2)}= \begin{bmatrix} i\\ -1 \\ 0   \end{bmatrix}.$$
$$z_{(1,0)}= \begin{bmatrix} -i\\ 1 \\ 0  \end{bmatrix},\qquad
  z_{(1,1)}= \begin{bmatrix} - i\\ 1 \\ 0 \end{bmatrix},\qquad
  z_{(1,2)}= \begin{bmatrix} - i\\ 1 \\ 0  \end{bmatrix}.$$
 The real part of $z(\chi_{1,2},u_{1,2})$ describes a shearing motion on the covering framework which is the negative of the shearing described by the real part of $z(\chi_{1,1},u_{1,1})$. The imaginary part of  $z(\chi_{1,2},u_{1,2})$ describes a shearing motion on the covering framework which is identical to the imaginary part of $z(\chi_{1,1},u_{1,1})$. 

 The vectors $u_{1,0}=(1,-\sqrt{3}, 0)$ and $a_{\lambda_1}=(0,0,1)$ span the kernel of $O_{\G_0}(\chi_{1,0})$. 
 The components of the $\chi$-symmetric vector $z(\chi_{1,0},u_{1,0})$ are,
 $$z_{(0,0)}= \begin{bmatrix} 1\\ -\sqrt{3} \\ 0 \end{bmatrix},\qquad
  z_{(0,1)}= \begin{bmatrix} -2\\ 0 \\ 0  \end{bmatrix},\qquad
  z_{(0,2)}= \begin{bmatrix} 1\\ \sqrt{3} \\ 0   \end{bmatrix}.$$
$$z_{(1,0)}= \begin{bmatrix} -1\\ \sqrt{3} \\ 0  \end{bmatrix},\qquad
  z_{(1,1)}= \begin{bmatrix} 2\\ 0 \\ 0 \end{bmatrix},\qquad
  z_{(1,2)}= \begin{bmatrix} -1\\ -\sqrt{3} \\ 0  \end{bmatrix}.$$
The $\chi$-symmetric vector $z(\chi_{1,0},u_{1,0})$ describes a twisting motion with one triangle rotating anticlockwise about the $z$-axis and the other triangle rotating clockwise about the $z$-axis.
\end{Example}


\section{Twisted almost periodic flexes}
\label{s:ap}
In this section, the class of twisted almost periodic flexes for a gain framework $\G_0$ is introduced and the following two main results are proved: Firstly, it is shown that the RUM spectrum $\Omega(\G_0)$ can be expressed as a union of Bohr-Fourier spectra associated to the twisted almost periodic flexes of $\G_0$ (Theorem \ref{t:bohr}). Secondly, a characterisation is provided for gain frameworks with the property that every twisted almost periodic flex is a translation (Theorem \ref{t:apr}).
For an introduction to the theory of almost periodic functions on locally compact abelian groups we refer the reader to \cite{hew-ros,rud,shu}. 
Given two topological spaces $W$ and $Z$, we denote by $C(W,Z)$ the set of all continuous functions from $W$ to $Z$.


\subsection{$\mathbb{C}$-valued almost periodic functions}
Let $\Gamma$ be a countable discrete abelian group and denote by $\hat{\Gamma}$ the dual group of continuous homomorphisms $\chi:\Gamma\to \mathbb{T}$,  where $\mathbb{T}=\{z\in\mathbb{C}:|z|=1\}$ denotes the circle group.
Denote by $\hat{\Gamma}_\delta$ the locally compact Abelian group obtained by endowing the dual group $\hat{\Gamma}$ with the discrete topology. 
We endow $C(\hat{\Gamma}_\delta,\mathbb{T})$  with the compact open topology. 
The {\em Bohr compactification} of $\Gamma$, denoted $\Gamma_B$, is the dual group of $\hat{\Gamma}_\delta$ together with the subspace topology inherited from $C(\hat{\Gamma}_\delta,\mathbb{T})$. 

A trigonometric polynomial on $\Gamma$ is a function $p:\Gamma\to\mathbb{C}$ of the form 
$$p(\gamma)=\sum_{k=1}^{n} a_k \chi_k(\gamma), \quad \text{ where }  a_k\in\mathbb{C},\,\, \chi_k\in \hat{\Gamma} \text{ and } \gamma\in \Gamma.$$
A function $f:\Gamma\rightarrow \mathbb{C}$ is \emph{almost periodic} if,  for every $\varepsilon>0$, there exists a trigonometric polynomial $p_\varepsilon$ such that $\|f-p_\varepsilon\|_\infty<\varepsilon$. 
We denote by $AP(\Gamma,\mathbb{C})$ the set of all complex-valued almost periodic functions on $\Gamma$. One can check that $AP(\Gamma,\mathbb{C})$ is a norm-closed self-adjoint subalgebra of the C$^*$-algebra $\ell^\infty(\Gamma,\mathbb{C})$ of bounded complex-valued functions on $\Gamma$, and hence $AP(\Gamma,\mathbb{C})$ is a commutative C$^*$-algebra. Using the Stone-Weierstrass theorem, it can be shown that $AP(\Gamma,\mathbb{C})$ is isomorphic to the C$^*$-algebra $C(\Gamma_B,\mathbb{C})$ of continuous complex-valued functions on the Bohr compactification $\Gamma_B$.  

\begin{Theorem}\label{t:firstapprox}\cite[Proposition 1.2]{shu}
Let $\Gamma$ be a countable discrete abelian group and let $f:\Gamma\rightarrow \mathbb{C}$. The following are equivalent:
\begin{enumerate}[(i)]
\item $f$ is almost periodic.
\item $f=g\circ i_B$ for some $g\in C(\Gamma_B,\mathbb{C})$, where $i_B$ is the natural embedding of $\Gamma$ into $\Gamma_B$.
\end{enumerate}
\end{Theorem}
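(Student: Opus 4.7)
The plan is to exploit two features of the Bohr compactification: every character $\chi\in\hat\Gamma$ extends canonically to a continuous character $\tilde\chi$ of the compact abelian group $\Gamma_B$, and the embedding $i_B(\Gamma)$ is dense in $\Gamma_B$. Combined with Stone-Weierstrass, these yield both implications in essentially the same stroke. Explicitly, for each $\chi\in\hat\Gamma$, define $\tilde\chi:\Gamma_B\to\mathbb{T}$ by evaluation, $\tilde\chi(\eta):=\eta(\chi)$ for $\eta\in\Gamma_B\subseteq C(\hat\Gamma_\delta,\mathbb{T})$. Since the compact-open topology on $\Gamma_B$ coincides with the topology of pointwise convergence on the discrete group $\hat\Gamma_\delta$, the map $\tilde\chi$ is continuous, and one checks $\tilde\chi\circ i_B=\chi$.

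For $(ii)\Rightarrow(i)$, suppose $f=g\circ i_B$ with $g\in C(\Gamma_B,\mathbb{C})$. Let $\mathcal{P}\subset C(\Gamma_B,\mathbb{C})$ denote the unital $\ast$-subalgebra generated by $\{\tilde\chi:\chi\in\hat\Gamma\}$; it is closed under conjugation because $\overline{\tilde\chi}=\widetilde{\bar\chi}$, and it separates the points of $\Gamma_B$ since distinct $\eta_1,\eta_2\in\Gamma_B$ must disagree at some $\chi\in\hat\Gamma$. By the Stone-Weierstrass theorem, $\mathcal{P}$ is dense in $C(\Gamma_B,\mathbb{C})$, so for each $\varepsilon>0$ there is $\tilde p_\varepsilon\in\mathcal{P}$ with $\|g-\tilde p_\varepsilon\|_\infty<\varepsilon$. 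Setting $p_\varepsilon:=\tilde p_\varepsilon\circ i_B$ gives a trigonometric polynomial on $\Gamma$ with $\|f-p_\varepsilon\|_\infty<\varepsilon$, so $f$ is almost periodic.

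For $(i)\Rightarrow(ii)$, pick trigonometric polynomials $p_n$ on $\Gamma$ with $\|f-p_n\|_\infty\to 0$, and let $\tilde p_n\in\mathcal{P}$ be obtained by replacing each $\chi$ appearing in $p_n$ with $\tilde\chi$. Density of $i_B(\Gamma)$ in the compact space $\Gamma_B$, together with continuity of $\tilde p_n-\tilde p_m$, gives
\[
\|\tilde p_n-\tilde p_m\|_{C(\Gamma_B,\mathbb{C})}=\sup_{\gamma\in\Gamma}|p_n(\gamma)-p_m(\gamma)|=\|p_n-p_m\|_\infty,
\]
so $(\tilde p_n)$ is uniformly Cauchy and converges to some $g\in C(\Gamma_B,\mathbb{C})$. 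Evaluating at $i_B(\gamma)$ yields $g(i_B(\gamma))=\lim_n p_n(\gamma)=f(\gamma)$, hence $f=g\circ i_B$.

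The main technical point is the density of $i_B(\Gamma)$ in $\Gamma_B$, which is used implicitly in the separation argument and explicitly in the isometric extension of trigonometric polynomials. I would extract it from Pontryagin duality: if $H:=\overline{i_B(\Gamma)}$ were a proper closed subgroup of $\Gamma_B$, there would exist a nontrivial continuous character of $\Gamma_B$ vanishing on $H$; but the continuous characters of $\Gamma_B=\widehat{\hat\Gamma_\delta}$ are exactly the $\tilde\chi$ for $\chi\in\hat\Gamma$, and $\tilde\chi\circ i_B=\chi$, so $\tilde\chi|_{i_B(\Gamma)}=1$ forces $\chi=1$ and hence $\tilde\chi=1$, a contradiction.
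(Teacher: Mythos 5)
Your proof is correct and follows exactly the route the paper leaves to the literature (the theorem is quoted from \cite[Proposition 1.2]{shu}): Stone--Weierstrass applied to the evaluation characters $\tilde\chi$ in $C(\Gamma_B,\mathbb{C})$, together with density of $i_B(\Gamma)$ in $\Gamma_B$ obtained from Pontryagin duality, which are precisely the ingredients the paper sketches in the surrounding remarks. One small correction: separation of points of $\Gamma_B$ by the $\tilde\chi$ is immediate from the definition of $\Gamma_B$ as a set of functions on $\hat{\Gamma}$, so density is not used there, only in the isometric extension step of $(i)\Rightarrow(ii)$.
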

Recall that the map $i_B:\Gamma\to\Gamma_B$ is given by the formula $i_B(\gamma)(\chi)=\chi(\gamma)$, where $\gamma\in \Gamma$ and $\chi\in \hat{\Gamma}$. Using Pontryagin duality it can be shown that $i_B(\Gamma)$ is dense in $\Gamma_B$. It follows that the function $g$ in Theorem \ref{t:firstapprox} is uniquely defined, so we also write  $i_B(f)=g$. The map $i_B:AP(\Gamma,\mathbb{C})\to C(\Gamma_B,\mathbb{C})$ is an isomorphism between  C$^*$-algebras, and hence isometric.

Since $\Gamma$ is a countable discrete abelian group, there exists an increasing sequence $(H_n)$ of finite subsets of $\Gamma$ such that
$\Gamma = \cup_n H_n$ and for each $\gamma\in\Gamma$,
\begin{equation}\label{setdiffer}
\lim_{n\to\infty} \frac{|\gamma H_n\cap H_n^c|}{|H_n|} = 0, 
\end{equation}
where $\gamma H_n=\{\gamma+\gamma' : \gamma'\in H_n\}$,  $|S|$ denotes the cardinality of a set $S$ and $S^c$ denotes the complement of $S$ (see \cite[Lemma 18.13]{hew-ros}).
We refer to $(H_n)$ as a \emph{Bohr-Bochner sequence} for $\Gamma$.
Given $f\in AP(\Gamma, \mathbb{C})$ define, 
\[M(f) := \lim_{n\to\infty} \frac{1}{|H_n|}\sum_{\gamma\in H_n} f(\gamma) \]
Then $M$ is a positive linear functional on $AP(\Gamma,\mathbb{C})$ that is independent of the choice of the sequence $(H_n)$. Moreover, the mean $M$ is invariant under translations, meaning that $M(f) = M(D_{\gamma'}(f))$ for all $\gamma'\in\Gamma$ and $f\in AP(\Gamma,\mathbb{C})$, where we define, 
    $$D_{\gamma'}:\ell^\infty(\Gamma,\mathbb{C})\to \ell^\infty(\Gamma,\mathbb{C}), \qquad f(\gamma)\mapsto f(\gamma-\gamma').$$
It can be shown that every positive  linear functional $M'$ on $AP(\Gamma,\mathbb{C})$ that is invariant under translations, is a positive multiple of $M$. Since $M(1)=1$, it follows that 
\[M(f)=\int_{\Gamma_B} i_B(f)\, d\mu,\]
where $\mu$ is the normalized Haar measure on $\Gamma_B$ (see \cite[Lemma 18.9]{hew-ros}). 

The following lemma demonstrates the importance of property \eqref{setdiffer}.

\begin{Lemma}\label{sumchar}
Let $\chi\in \hat{\Gamma}$. Then, 
\begin{equation*}
M(\chi)=    \begin{cases}
1, \text{ if } \chi = 1\\
0, \text{ if } \chi\neq 1
\end{cases}
\end{equation*}
\end{Lemma}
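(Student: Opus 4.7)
The plan is to split the proof into the two cases presented by the statement. If $\chi$ is the trivial character, then $\chi(\gamma) = 1$ for every $\gamma \in \Gamma$, so the averaged sum in the definition of $M$ equals $\frac{|H_n|}{|H_n|} = 1$ for every $n$, and passing to the limit gives $M(\chi) = 1$ immediately.

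The substantive case is when $\chi \neq 1$, where I would invoke the F\o{}lner-type property \eqref{setdiffer} to show $M(\chi) = 0$. First, choose $\gamma' \in \Gamma$ with $\chi(\gamma') \neq 1$. The main idea is to compare the partial sum over $H_n$ with the translated partial sum over $\gamma' + H_n$. Since $\chi$ is a homomorphism,
\[
\sum_{\eta \in \gamma' + H_n} \chi(\eta) \;=\; \sum_{\gamma \in H_n} \chi(\gamma' + \gamma) \;=\; \chi(\gamma')\sum_{\gamma \in H_n}\chi(\gamma).
\]
On the other hand, expressing the left-hand side as a sum over $H_n$ plus an error supported on the symmetric difference, and using $|\chi| \equiv 1$, yields the bound
\[
\Bigl|\sum_{\gamma \in H_n}\chi(\gamma) - \sum_{\eta \in \gamma' + H_n}\chi(\eta)\Bigr| \;\leq\; |H_n \setminus (\gamma' + H_n)| + |(\gamma' + H_n) \setminus H_n|.
\]
Both terms have the same cardinality (since translation is a bijection), and this common cardinality equals $|\gamma' H_n \cap H_n^c|$ in the notation of the paper.

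Dividing by $|H_n|$ and applying \eqref{setdiffer}, the two averages have the same limit. Thus $M(\chi) = \chi(\gamma') M(\chi)$, which forces $(1 - \chi(\gamma'))M(\chi) = 0$. Since $\chi(\gamma') \neq 1$, we conclude $M(\chi) = 0$. The only mildly delicate point is the bookkeeping of the symmetric-difference estimate, but this is routine once one notices that translation by $\gamma'$ preserves cardinality, so no real obstacle arises. An alternative one-line argument would be to directly invoke translation invariance of $M$ already established in the text, writing $M(\chi) = M(D_{\gamma'}\chi) = \overline{\chi(\gamma')}M(\chi)$; however since the lemma is explicitly framed as illustrating the importance of property \eqref{setdiffer}, the direct F\o{}lner calculation above seems preferable.
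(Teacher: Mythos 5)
Your proof is correct and follows essentially the same route as the paper: both arguments fix $\gamma'$ with $\chi(\gamma')\neq 1$, compare the average of $\chi$ over $H_n$ with the average over the translate $\gamma'+H_n$, use the F\o lner-type property \eqref{setdiffer} to kill the symmetric-difference error, and conclude $\chi(\gamma')M(\chi)=M(\chi)$. The only difference is bookkeeping (you bound the symmetric difference directly, noting its two halves have equal cardinality, while the paper splits the translated sum into three pieces), which is immaterial.
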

\begin{proof}
If $\chi = 1$, then the result follows by a direct calculation. Suppose there exists $\gamma_0\in \Gamma$, such that $\chi(\gamma_0)\neq 1$. Choose a Bohr-Bochner sequence $(H_n)$ for $\Gamma$. Then,
\begin{align*}
\chi(\gamma_0) M(\chi)&= \lim_{n\to\infty} \frac{1}{|H_n|}\sum_{\gamma\in H_n} \chi(\gamma_0+\gamma)= \lim_{n\to\infty} \frac{1}{|H_n|}\sum_{\gamma\in \gamma_0 H_n} \chi(\gamma)\\
&= \lim_{n\to\infty} \frac{1}{|H_n|}\left(\sum_{\gamma\in H_n} \chi(\gamma)+\sum_{\gamma\in \gamma_0 H_n\cap H_n^c} \chi(\gamma)-\sum_{\gamma\in H_n\cap \gamma_0 H_n^c} \chi(\gamma)\right)
\end{align*}
It follows by property \eqref{setdiffer} that,
\begin{align*}
\lim_{n\to\infty} \bigg| \frac{1}{|H_n|}\sum_{\gamma\in \gamma_0 H_n\cap H_n^c} \chi(\gamma)\bigg|\leq 
\lim_{n\to\infty} \frac{1}{|H_n|}\sum_{\gamma\in \gamma_0 H_n\cap H_n^c} |\chi(\gamma)|= 
\lim_{n\to\infty} \frac{|\gamma_0 H_n\cap H_n^c|}{|H_n|}=0.    
\end{align*}
Since $|H_n\cap \gamma_0 H_n^c|=|(-\gamma_0 H_n)\cap H_n^c|$, the same argument shows that,
\[\lim_{n\to\infty} \bigg| \frac{1}{|H_n|}\sum_{\gamma\in H_n\cap \gamma_0 H_n^c} \chi(\gamma)\bigg|=0,\]
so we obtain, 
\[\chi(\gamma_0) M(\chi)=M(\chi).\]
Since $\chi(\gamma_0)\neq 1$, the result follows.
\end{proof}


\subsection{Vector-valued almost periodic functions}
The notion of almost periodicity can be extended to vector-valued functions in the following way: Let $X$ be a finite dimensional complex Hilbert space and denote by $\ell^\infty(\Gamma, X)$ the Banach space of bounded $X$-valued functions on $\Gamma$ with the supremum norm. An $X$-valued trigonometric polynomial on $\Gamma$ is a function $p:\Gamma \to X$ given by 
$$p(\gamma)=\sum_{k=1}^{n} a_k \chi_k(\gamma), \text{ where } \chi_k\in \hat{\Gamma},\, \gamma\in \Gamma \text{ and } a_k\in X.$$
We define the space $AP(\Gamma,X)$ of $X$-valued almost periodic functions as the closure in $\ell^\infty(\Gamma,X)$ of the $X$-valued trigonometric polynomials on $\Gamma$.
We identify $AP(\Gamma,X)$ with the space $AP(\Gamma, \mathbb{C})\otimes X$ using the map, $$ f\otimes a\mapsto \tilde{f}_a : \tilde{f}_a(\gamma)=f(\gamma)a, \quad f\in AP(\Gamma, \mathbb{C}),\quad a\in X,$$
and extending linearly. By abuse of notation, we extend the definition of the mean $M$ to $AP(\Gamma,X)$ by setting,
$$M(f\otimes a):= M(f)a.$$
For $f\in AP(\Gamma, \mathbb{C})$ and $h\in AP(\Gamma, X)$, we write the \emph{mean inner product},
\[   [f,h]:=  M(\bar{f}h)=\lim_{n\to\infty} \frac{1}{|H_n|}\sum_{\gamma\in H_n} \overline{f(\gamma)} h(\gamma)  \]
and, for each $\gamma\in \Gamma$, the \emph{mean convolution formula},
\[(f\ast h)(\gamma)= \lim_{n\to\infty} \frac{1}{|H_n|}\sum_{\gamma'\in H_n} f(\gamma-\gamma')h(\gamma').\]
Note that the mean inner product and the mean convolution formula are independent of the choice of Bohr-Bochner sequence $(H_n)$.
For $\chi\in\hat{\Gamma}$, we define the Fourier coefficient $\hat{h}(\chi)\in X$ of $h$ by,
\[\hat{h}(\chi):=[\chi,h]=\lim_{n\to\infty} \frac{1}{|H_n|}\sum_{\gamma\in H_n} \overline{\chi(\gamma)}h(\gamma).\]
Note that for $\chi'\in \hat{\Gamma}$ and $a\in X$, Lemma \ref{sumchar} yields,
\[ \widehat{(\chi'\otimes a)}(\chi)
= [\chi,\chi'\otimes a]=M((\bar{\chi}\chi')\otimes a)=M(\bar{\chi}\chi')a
=\left\{\begin{array}{ll}
a,& \mbox{ if }\chi'=\chi \\
0,& \mbox{ if }\chi'\not=\chi \\
\end{array}\right..\]
We write $\Lambda(h)=\{\chi\in \hat{\Gamma}: \hat{h}(\chi)\neq 0\}$ for the \emph{Bohr-Fourier spectrum} of $h\in AP(\Gamma, X)$. 


\subsection{Bochner-Fejer polynomials}
Next we see that every $X$-valued almost periodic function $h\in AP(\Gamma,X)$ is the uniform limit of a net of $X$-valued trigonometric polynomials $(\sigma_\lambda(h))_{\lambda\in L}$, such that $\Lambda(\sigma_\lambda(h))\subseteq \Lambda(h)$ for each $\lambda$ in a directed set $L$.

Let $\mathcal{U}$ be an open neighborhood basis of the identity in the Bohr compactification $\Gamma_B$ and fix $U\in\mathcal{U}$.  By Urysohn's lemma, there exists a non-negative function $g_U\in C(\Gamma_B,\mathbb{C})$ such that $\supp(g_U)\subseteq U$ and $M(g_U\circ i_B)=\int_{\Gamma_B} g_U\, d\mu=1$. 
By Theorem \ref{t:firstapprox}, the composition $g_U\circ i_B$ lies in $AP(\Gamma,\mathbb{C})$. 
Thus, given $\varepsilon>0$, there exists a real trigonometric polynomial $p_{U,\varepsilon}\in AP(\Gamma,\mathbb{C})$ such that, $$\|g_U - i_B(p_{U,\varepsilon})\|_\infty = \|g_U\circ i_B - p_{U,\varepsilon}\|_\infty
<\varepsilon.$$ Adding $\varepsilon$ and normalizing if needed, we may assume that $p_{U,\varepsilon}\geq 0$ and $M(p_{U,\varepsilon})=1$.

Define the directed set $L=\mathcal{U}\times (0,1)$ with the partial ordering $\prec$ given by,
$$(U_1,\varepsilon_1)\prec(U_2,\varepsilon_2)\text{ when }U_2\subseteq U_1 \text{ and }\varepsilon_2\leq \varepsilon_1.$$ 
Then the net of trigonometric polynomials $(p_\lambda)_{\lambda\in L}$ satisfies the kernel properties:

\begin{enumerate}[(i)]
\item $p_\lambda$ is non-negative;
\item $M(p_\lambda)=1$; and
\item $\lim\limits_{\lambda\in L} \,\left(\sup\{i_B(p_\lambda)(x):x\notin U\}\right)=0$ for each open neighbourhood $U$ of the identity in $\Gamma_B$. 
\end{enumerate}

Write the trigonometric polynomials $p_\lambda$ in standard form $p_\lambda(\gamma)=\sum\limits_{k=1}^{N_\lambda} a_k^{(\lambda)} \chi_k(\gamma)$ and let $h\in AP(\Gamma, X)$. Define the Bochner-Fejer polynomial,
\begin{align*}
    \sigma_\lambda(h)(\gamma)&:=(p_\lambda \ast h)(\gamma)=\lim_{n\to\infty} \frac{1}{|H_n|}\sum_{\gamma'\in H_n} p_\lambda(\gamma-\gamma')h(\gamma')\\&=\lim_{n\to\infty} \frac{1}{|H_n|}\sum_{\gamma'\in H_n} \sum\limits_{k=1}^{N_\lambda}a_k^{(\lambda)} \chi_k(\gamma-\gamma')h(\gamma')
    \\&=\sum\limits_{k=1}^{N_\lambda}a_k^{(\lambda)} \hat{h}(\chi_k)\chi_k(\gamma).
\end{align*}
It is evident from the above calculation that $\Lambda(\sigma_\lambda(h))\subseteq \Lambda(h)$.
The following fundamental theorem is now obtained by a standard approximation argument (see for example \cite[Theorem 7.5]{bks}).

\begin{Theorem}
\label{CesApprox}
Let $h\in AP(\Gamma,X)$. Then
\[
 \sigma_\lambda(h)= p_\lambda \ast h \stackrel{\|\cdot\|_\infty}{\longrightarrow} h.
\]
\end{Theorem}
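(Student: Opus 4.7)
The plan is to reduce the statement to the fact that convolution with an approximate identity on a compact abelian group converges uniformly, and then pull the conclusion back to $\Gamma$ via the Bohr compactification. First I would lift everything to $\Gamma_B$: the excerpt identifies $AP(\Gamma,\mathbb{C})$ with $C(\Gamma_B,\mathbb{C})$ isometrically through $i_B$, and this extends tensorially to an isometric isomorphism $AP(\Gamma,X) \to C(\Gamma_B,X)$, sending $h$ to some $\tilde{h}\in C(\Gamma_B,X)$ and each kernel $p_\lambda$ to a non-negative $\tilde{p}_\lambda\in C(\Gamma_B,\mathbb{C})$. The three kernel properties of $(p_\lambda)_{\lambda\in L}$ then transfer to $(\tilde{p}_\lambda)_{\lambda\in L}$: namely, $\tilde{p}_\lambda \geq 0$, $\int_{\Gamma_B} \tilde{p}_\lambda\, d\mu = M(p_\lambda) = 1$, and for every open neighbourhood $U$ of the identity in $\Gamma_B$, $\sup_{\tilde{\eta}\notin U}\tilde{p}_\lambda(\tilde{\eta}) \to 0$. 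Thus $(\tilde{p}_\lambda)$ is an approximate identity on the compact group $\Gamma_B$.

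Next I would rewrite the mean convolution as a Haar convolution. Using the translation invariance of $M$ and the identification $M(f) = \int_{\Gamma_B} i_B(f)\, d\mu$ (applied componentwise to the vector-valued setting), for each $\gamma\in\Gamma$,
\[
(p_\lambda \ast h)(\gamma) = M_{\gamma'}\bigl(p_\lambda(\gamma-\gamma')h(\gamma')\bigr) = \int_{\Gamma_B} \tilde{p}_\lambda\bigl(i_B(\gamma)-\tilde{\eta}\bigr)\,\tilde{h}(\tilde{\eta})\, d\mu(\tilde{\eta}).
\]
Both sides are continuous in $\tilde{\gamma}\in\Gamma_B$ after composing with $i_B$ and extend to $\Gamma_B$; by density of $i_B(\Gamma)$ the extended identity
\[
\widetilde{\sigma_\lambda(h)}(\tilde{\gamma}) = \int_{\Gamma_B} \tilde{p}_\lambda(\tilde{\gamma}-\tilde{\eta})\,\tilde{h}(\tilde{\eta})\, d\mu(\tilde{\eta})
\]
holds on all of $\Gamma_B$.

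Third, I would exploit uniform continuity of $\tilde{h}$ (automatic since $\Gamma_B$ is compact). Given $\varepsilon>0$, I choose $U\in\mathcal{U}$ with $\|\tilde{h}(\tilde{\gamma}-\tilde{\eta}) - \tilde{h}(\tilde{\gamma})\|_X < \varepsilon/2$ for every $\tilde{\eta}\in U$ and every $\tilde{\gamma}\in\Gamma_B$. Using property (ii) of $\tilde{p}_\lambda$,
\[
\widetilde{\sigma_\lambda(h)}(\tilde{\gamma}) - \tilde{h}(\tilde{\gamma}) = \int_{\Gamma_B} \tilde{p}_\lambda(\tilde{\eta})\bigl(\tilde{h}(\tilde{\gamma}-\tilde{\eta}) - \tilde{h}(\tilde{\gamma})\bigr)\, d\mu(\tilde{\eta}).
\]
Splitting the integral at $U$ and $U^c$: the part over $U$ is bounded in norm by $\varepsilon/2$, while the part over $U^c$ is bounded by $2\|\tilde{h}\|_\infty\cdot \sup_{\tilde{\eta}\notin U} \tilde{p}_\lambda(\tilde{\eta})$, which falls below $\varepsilon/2$ for $\lambda$ sufficiently far in $L$ by property (iii). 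Since $i_B$ is isometric, $\|\sigma_\lambda(h)-h\|_\infty = \|\widetilde{\sigma_\lambda(h)} - \tilde{h}\|_\infty \to 0$.

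The main obstacle is Step 2: rigorously justifying the convolution-as-Haar-integral identity for vector-valued almost periodic functions, which requires extending the scalar relation $M(f) = \int_{\Gamma_B} i_B(f)\, d\mu$ to $AP(\Gamma,X) = AP(\Gamma,\mathbb{C})\otimes X$ and invoking translation invariance of $\mu$ to absorb the shift inside $\tilde{p}_\lambda$. Once this bookkeeping is in place, the remainder is the standard approximate-identity argument on a compact abelian group alluded to in \cite[Theorem 7.5]{bks}.
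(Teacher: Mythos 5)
Your argument is correct and is essentially the standard approximate-identity argument that the paper itself invokes by citation (it gives no in-text proof, deferring to \cite[Theorem 7.5]{bks}), and everything you use — the isometric identification of $AP(\Gamma,X)$ with $C(\Gamma_B,X)$, the identity $M(f)=\int_{\Gamma_B} i_B(f)\,d\mu$, and the kernel properties (i)--(iii) of $(p_\lambda)_{\lambda\in L}$ — is already set up in the text for exactly this purpose. The only bookkeeping worth recording explicitly is the routine check that $\gamma'\mapsto p_\lambda(\gamma-\gamma')h(\gamma')$ is again almost periodic (scalar almost periodic times vector almost periodic, $X$ finite dimensional), so that the mean and its Haar-integral representation apply; with that noted, your convolution identity on $\Gamma_B$ and the split of the integral over $U$ and its complement constitute the standard proof.
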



\subsection{Approximation lemmas}
\label{s:approx}
Let $\Gamma$ be a countable discrete abelian group and let $X$ be a finite dimensional complex Hilbert spaces.
Define a   representation $\pi_X:\Gamma\to B(\ell^\infty(\Gamma,X))$  
where,
\[(\pi_X(\gamma)f)(\gamma')= f(\gamma'-\gamma),\quad \forall\,f\in \ell^\infty(\Gamma,X).\]
For $\chi \in \hat{\Gamma}$ and a finite subset $H\subset \Gamma$, define the bounded operator $U_X(\chi,H)\in B(\ell^\infty(\Gamma, X))$ 
by,
\[
U_X(\chi,H)=\frac{1}{|H|}\sum_{\gamma\in H} \chi(\gamma)\pi_X(\gamma)
\]

\begin{Lemma}\label{L0}  
Let $\chi\in\hat{\Gamma}$ and let $a\in X$.
If $(H_n)$ is a Bohr-Bochner sequence for $\Gamma$ then the sequence $U_X(\chi',H_n)(\chi\otimes a)$
converges uniformly to $\chi\otimes a$ when $\chi' = \chi$ and to  zero otherwise.
\end{Lemma}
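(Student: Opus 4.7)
The proof is essentially a direct computation reducing the statement to Lemma \ref{sumchar}. The plan is to expand $U_X(\chi',H_n)(\chi\otimes a)$ pointwise using the definitions of $U_X$ and $\pi_X$. Since $(\chi\otimes a)(\gamma'-\gamma) = \chi(\gamma'-\gamma)a = \chi(\gamma')\overline{\chi(\gamma)}\,a$, the factor $\chi(\gamma')a$ pulls outside the averaging sum, yielding
$$U_X(\chi',H_n)(\chi\otimes a)(\gamma') = c_n\,(\chi\otimes a)(\gamma'), \qquad c_n := \frac{1}{|H_n|}\sum_{\gamma\in H_n}(\chi'\bar\chi)(\gamma).$$
The key observation is that the scalar $c_n$ is independent of $\gamma'$, so
$$\bigl\|U_X(\chi',H_n)(\chi\otimes a) - c_n(\chi\otimes a)\bigr\|_\infty = 0,$$
and consequently the entire problem reduces to computing the limit of $c_n$.

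Once this reduction is in place, I would split into the two stated cases. If $\chi'=\chi$, then $\chi'\bar\chi\equiv 1$, so $c_n=1$ for every $n$, and the sequence is identically $\chi\otimes a$; uniform convergence to $\chi\otimes a$ is immediate. If $\chi'\ne\chi$, then $\chi'\bar\chi$ is a non-trivial character of $\Gamma$, and by Lemma \ref{sumchar} we have $M(\chi'\bar\chi)=0$; the proof of that lemma is precisely the assertion that $c_n\to 0$ for any Bohr--Bochner sequence $(H_n)$. Multiplying by $\|\chi\otimes a\|_\infty = \|a\|$ gives uniform convergence to zero.

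No substantive obstacle is expected: the entire content of the argument lies in the clean factorisation $U_X(\chi',H_n)(\chi\otimes a)=c_n(\chi\otimes a)$, which is what turns pointwise convergence of the scalar averages (already delivered by Lemma \ref{sumchar}) into uniform convergence of the vector-valued functions. This lemma is, in effect, the operator-theoretic reformulation of Lemma \ref{sumchar} applied to the rank-one test vector $\chi\otimes a$, and will serve as the starting point for the forthcoming more general approximation results for $AP(\Gamma,X)$.
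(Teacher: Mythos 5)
Your proposal is correct and follows essentially the same route as the paper: expand $U_X(\chi',H_n)(\chi\otimes a)$ pointwise, pull out the scalar average $\frac{1}{|H_n|}\sum_{\gamma\in H_n}\chi'(\gamma)\overline{\chi(\gamma)}$ multiplying $\chi(\omega)a$, and invoke Lemma \ref{sumchar}. The only cosmetic difference is that you make the uniformity explicit via the factorisation $U_X(\chi',H_n)(\chi\otimes a)=c_n(\chi\otimes a)$, which the paper leaves implicit.
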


\begin{proof}
For each $\omega\in \Gamma$,
\begin{eqnarray*}
U_X(\chi',H_n)(\chi\otimes a)\omega 
&=&  \frac{1}{|H_n|}\sum_{\gamma\in H_n} \chi'(\gamma)\pi_X(\gamma)(\chi\otimes a)\omega  \\ 
&=&  \frac{1}{|H_n|}\sum_{\gamma\in H_n} \chi'(\gamma) \chi(\omega-\gamma)a\\
&=&\left(  \frac{1}{|H_n|}\sum_{\gamma\in H_n} \chi'(\gamma)\overline{\chi(\gamma)}\right)\chi(\omega) a 
\end{eqnarray*}
The result now follows from Lemma \ref{sumchar}.
\end{proof}

\begin{Lemma}\label{L1}  
Let $h\in AP(\Gamma,X)$ and let $\chi\in\hat{\Gamma}$. If $(H_n)$ is a Bohr-Bochner sequence for $\Gamma$ then the sequence $U_X(\chi,H_n)h$
converges uniformly to $\chi\otimes \hat{h}(\chi)$.
\end{Lemma}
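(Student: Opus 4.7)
The plan is to prove the statement by approximation: first handle the case when $h$ is a trigonometric polynomial (where Lemma \ref{L0} applies term-by-term), and then extend to general $h\in AP(\Gamma,X)$ by density, using uniform boundedness of the operators $U_X(\chi,H_n)$.

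First I would observe that $\|U_X(\chi,H_n)\|\leq 1$ for every $n$. Indeed, each $\pi_X(\gamma)$ is an isometry of $\ell^\infty(\Gamma,X)$ and $|\chi(\gamma)|=1$, so the averaged sum has operator norm at most $1$ by the triangle inequality. This uniform bound is what will allow me to transfer the conclusion from dense approximants to $h$.

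Next I would treat trigonometric polynomials. If $p=\sum_{k=1}^{N}\chi_k\otimes a_k$, then by linearity and Lemma \ref{L0}, $U_X(\chi,H_n)p$ converges uniformly to $\sum_{k:\chi_k=\chi}\chi\otimes a_k=\chi\otimes\hat{p}(\chi)$, where the last equality uses the Fourier coefficient computation in the excerpt, namely $\widehat{(\chi_k\otimes a_k)}(\chi)=a_k$ if $\chi_k=\chi$ and $0$ otherwise (a direct consequence of Lemma \ref{sumchar}). So the claim holds on the dense subspace of trigonometric polynomials.

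For the general step, let $\varepsilon>0$ and pick a trigonometric polynomial $p_\varepsilon$ with $\|h-p_\varepsilon\|_\infty<\varepsilon$. Since the mean $M$ is a positive linear functional with $M(1)=1$, the bound
\[
\|\hat{h}(\chi)-\hat{p}_\varepsilon(\chi)\|=\|M\bigl(\bar\chi(h-p_\varepsilon)\bigr)\|\leq \|h-p_\varepsilon\|_\infty<\varepsilon
\]
holds, hence $\|\chi\otimes\hat{h}(\chi)-\chi\otimes\hat{p}_\varepsilon(\chi)\|_\infty<\varepsilon$. Combining with the operator-norm bound $\|U_X(\chi,H_n)(h-p_\varepsilon)\|_\infty<\varepsilon$ and the polynomial case $U_X(\chi,H_n)p_\varepsilon\to\chi\otimes\hat{p}_\varepsilon(\chi)$, the triangle inequality gives
\[
\limsup_{n\to\infty}\,\|U_X(\chi,H_n)h-\chi\otimes\hat{h}(\chi)\|_\infty\leq 2\varepsilon.
\]
Letting $\varepsilon\to 0$ finishes the argument. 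I do not anticipate a serious obstacle here; the only subtlety is confirming the contractivity of $U_X(\chi,H_n)$ and identifying $\sum_{k:\chi_k=\chi}a_k$ as $\hat{p}(\chi)$, both of which are immediate from definitions and Lemma \ref{sumchar}.
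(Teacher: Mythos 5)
Your proposal is correct and follows essentially the same route as the paper's proof: Lemma \ref{L0} plus linearity handles trigonometric polynomials, and the general case is obtained by density together with contractivity of $U_X(\chi,H_n)$ and of the Fourier coefficient map. The only difference is that you spell out these two contractivity estimates explicitly, which the paper simply asserts.
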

\begin{proof}
First, if $h=\chi\otimes a$ for some $\chi\in\hat{\Gamma}$ and  $a\in X$, then the result holds by Lemma \ref{L0}. By linearity the statement is true for every trigonometric polynomial $p\in AP(\Gamma,X)$. In the general case, let $h\in AP(\Gamma,X)$ and $\varepsilon>0$. Then there exists a trigonometric polynomial $p$, such that $\|h-p\|_\infty<\varepsilon.$
Since the maps $U_X(\chi,H_n)$ and $h\to \hat{h}$ are contractive, we have for large enough $n$ 
\begin{eqnarray*}
\|U_X(\chi,H_n)h-\chi\otimes \hat{h}(\chi)\|_\infty&\leq&
\|U_X(\chi,H_n)h-U_X(\chi,H_n)p\|_\infty+\|U_X(\chi,H_n)p-\chi\otimes \hat{h}(\chi)\|_\infty\\&\leq&
\|U_X(\chi,H_n)h-U_X(\chi,H_n)p\|_\infty+\|\chi\otimes \hat{p}(\chi)-\chi\otimes \hat{h}(\chi)\|_\infty +\varepsilon
\\&\leq&
2\|h-p\|_\infty +\varepsilon< 3\varepsilon
\end{eqnarray*}
\end{proof}


\subsection{Intertwiners}
\label{s:intertwiners}
Let $X$ and $Y$ be finite dimensional complex Hilbert spaces.

\begin{Lemma}\label{L2}
Let $h\in AP(\Gamma,X)$ and let $C$ be an intertwiner for the representations $\pi_X$ and $\pi_Y$.
Then the following are equivalent:
\begin{enumerate}[(i)]
    \item $C(h)=0$.
    \item $C(\chi\otimes \hat{h}(\chi))=0$ for each $\chi\in\hat{\Gamma}$.
    \item $C(\sigma_\lambda(h))=0$ for each $\lambda$.
\end{enumerate}
\end{Lemma}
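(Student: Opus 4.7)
The plan is to prove the equivalence by establishing the cycle (i) $\Rightarrow$ (ii) $\Rightarrow$ (iii) $\Rightarrow$ (i), using the averaging operators $U_X(\chi,H)$ to move between $h$ and its Fourier components and the Bochner--Fej\'er approximation to move between the trigonometric polynomials $\sigma_\lambda(h)$ and $h$ itself. The key observation that glues everything together is that since $C$ is a bounded intertwiner, $\chi(\gamma)$ is a scalar, and $U_X(\chi,H)$ is a finite convex-type combination of the $\pi_X(\gamma)$, we have the identity
\[
C\circ U_X(\chi,H) \;=\; U_Y(\chi,H)\circ C
\]
for every $\chi\in\hat{\Gamma}$ and every finite $H\subset\Gamma$; in particular the intertwining property passes from $(\pi_X,\pi_Y)$ to $(U_X(\chi,H),U_Y(\chi,H))$ without any extra work.

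For (i) $\Rightarrow$ (ii), I fix $\chi\in\hat{\Gamma}$ and a Bohr--Bochner sequence $(H_n)$ for $\Gamma$. The intertwining identity above gives $C(U_X(\chi,H_n)h) = U_Y(\chi,H_n)C(h) = 0$ for every $n$. By Lemma \ref{L1}, $U_X(\chi,H_n)h \to \chi\otimes \hat{h}(\chi)$ uniformly, so by boundedness (hence norm continuity) of $C$,
\[
C(\chi\otimes \hat{h}(\chi)) \;=\; \lim_{n\to\infty} C(U_X(\chi,H_n)h) \;=\; 0.
\]

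For (ii) $\Rightarrow$ (iii), I use the explicit formula for the Bochner--Fej\'er polynomial derived before Theorem \ref{CesApprox}, namely $\sigma_\lambda(h) = \sum_{k=1}^{N_\lambda} a_k^{(\lambda)}\,\chi_k\otimes\hat{h}(\chi_k)$. Linearity of $C$ then immediately yields $C(\sigma_\lambda(h))=\sum_k a_k^{(\lambda)}\, C(\chi_k\otimes\hat{h}(\chi_k)) = 0$. Finally, for (iii) $\Rightarrow$ (i), Theorem \ref{CesApprox} says $\sigma_\lambda(h)\to h$ uniformly in $\ell^\infty(\Gamma,X)$, and once again boundedness of $C$ gives $C(h)=\lim_\lambda C(\sigma_\lambda(h))=0$.

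None of the three steps is really the ``hard part''; the substance of the lemma is already contained in Lemma \ref{L1} and Theorem \ref{CesApprox}. The only point that needs a moment's care is verifying that the intertwining property is preserved under the scalar-weighted averages defining $U_X(\chi,H_n)$, but since these are just finite linear combinations of the $\pi_X(\gamma)$ with complex coefficients $\chi(\gamma)/|H_n|$, this is automatic from the definition of an intertwiner and the linearity of $C$.
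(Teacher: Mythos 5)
Your proof is correct and follows essentially the same route as the paper's: the intertwining identity $C\circ U_X(\chi,H_n)=U_Y(\chi,H_n)\circ C$ together with Lemma \ref{L1} and continuity of $C$ gives $(i)\Rightarrow(ii)$, linearity gives $(ii)\Rightarrow(iii)$, and Theorem \ref{CesApprox} with continuity gives $(iii)\Rightarrow(i)$. The only difference is that you spell out the details which the paper leaves implicit.
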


\begin{proof}
$(i)\Rightarrow(ii)$ Suppose $C(h)=0$ and let $\chi\in\hat{\Gamma}$.
Let $(H_n)$ be a Bohr-Bochner sequence for $\Gamma$. 
Then by Lemma \ref{L1},
\[
C(\chi\otimes \hat{h}(\chi))  = \lim_{n\to \infty}C(U_X(\chi,H_n)h) =\lim_{n\to \infty}U_Y(\chi,H_n)C(h)=0.
\] 

$(ii)\Rightarrow (iii)$ This follows from the linearity of $C$.

$(iii)\Rightarrow (i)$ This follows from Theorem \ref{CesApprox} and the continuity of $C$.
\end{proof}

Let $U(X)$ denote the unitary group of the Hilbert space $X$ and let $\tau:\Gamma\to U(X)$ be a unitary representation. Define the isometric isomorphism $T_\tau\in B(\ell^\infty(\Gamma,X))$ where, for each $f\in \ell^\infty(\Gamma,X)$,
\[T_\tau(f)(\gamma) = \tau(\gamma)f(\gamma).\]
Define the representation $\pi_{X,\tau}:\Gamma\to B(\ell^\infty(\Gamma,X))$ where,
\[\pi_{X,\tau}(\gamma) = T_{\tau}\circ \pi_X(\gamma)\circ T_\tau^{-1}.\]
We refer to a function of the form $T_\tau h$, where $h\in AP(\Gamma,X)$, as a {\em twisted almost periodic function}. 

\begin{Lemma}\label{L3}
Let $h\in AP(\Gamma,X)$ and let $C$ be an intertwiner for the representations $\pi_{X,\tau}$ and $\pi_Y$.
Then the following are equivalent:
\begin{enumerate}[(i)]
    \item $C(T_\tau h)=0$.
    \item $CT_\tau(\chi\otimes \hat{h}(\chi))=0$ for each $\chi\in\hat{\Gamma}$.
    \item $CT_\tau(\sigma_\lambda(h))=0$ for each $\lambda$.
\end{enumerate}
\end{Lemma}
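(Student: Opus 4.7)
The plan is to reduce Lemma \ref{L3} to the already-established Lemma \ref{L2} by noting that the composition $CT_\tau$ is itself an intertwiner for the \emph{untwisted} representations $\pi_X$ and $\pi_Y$.

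First I would verify this intertwining property. By hypothesis, $C\pi_{X,\tau}(\gamma)=\pi_Y(\gamma)C$ for every $\gamma\in\Gamma$. Using the definition $\pi_{X,\tau}(\gamma)=T_\tau\circ\pi_X(\gamma)\circ T_\tau^{-1}$, one rewrites this as
\[
CT_\tau\pi_X(\gamma)T_\tau^{-1}=\pi_Y(\gamma)C,
\]
and multiplying by $T_\tau$ on the right yields $(CT_\tau)\pi_X(\gamma)=\pi_Y(\gamma)(CT_\tau)$. Hence $CT_\tau\in B(\ell^\infty(\Gamma,X),\ell^\infty(\Gamma,Y))$ is an intertwiner for the pair $(\pi_X,\pi_Y)$.

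Next I would apply Lemma \ref{L2} with the intertwiner $CT_\tau$ in place of $C$ and the almost periodic function $h\in AP(\Gamma,X)$. That lemma gives the equivalence of
\[
(CT_\tau)(h)=0,\qquad (CT_\tau)(\chi\otimes\hat{h}(\chi))=0\ \text{for each }\chi\in\hat\Gamma,\qquad (CT_\tau)(\sigma_\lambda(h))=0\ \text{for each }\lambda,
\]
which is precisely the equivalence of statements (i), (ii), (iii) in Lemma \ref{L3} once one reads $(CT_\tau)(h)=C(T_\tau h)$.

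There is no real obstacle here; the only subtlety is the bookkeeping check that the twist by $T_\tau$ transfers the intertwining property from the pair $(\pi_{X,\tau},\pi_Y)$ to the pair $(\pi_X,\pi_Y)$, which is immediate from the definition of $\pi_{X,\tau}$. The work of approximating $h$ by its Bochner–Fejer polynomials $\sigma_\lambda(h)$ and of isolating Fourier coefficients through the averages $U_X(\chi,H_n)$ has already been done in Lemmas \ref{L0}, \ref{L1}, and \ref{L2}, so nothing further is needed.
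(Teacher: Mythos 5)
Your proposal is correct and follows exactly the same route as the paper: observe that $CT_\tau$ intertwines $\pi_X$ and $\pi_Y$, then invoke Lemma \ref{L2}. Your explicit verification of the intertwining step is a welcome detail the paper leaves implicit.
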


\begin{proof}
Note that $CT_\tau$ is an intertwiner for the representations $\pi_X$ and $\pi_Y$. The lemma now follows directly from Lemma \ref{L2}.
\end{proof}


\subsection{Twisted almost-periodic flexes}
Let $\G_0=(G_0,m,\varphi,\tau)$ be a $\Gamma$-gain framework.
Recall, from Section \ref{s:reps}, that $\tilde{\tau}:\Gamma\to B(X^{V_0})$ denotes the unitary representation, 
$$\tilde{\tau}(\gamma)(x_v)_{v\in V_0} = (d\tau(\gamma)x_v)_{v\in V_0}$$
and $T_{\tilde{\tau}}\in B(\ell^\infty(\Gamma,X^{V_0}))$ is the isometric isomorphism,
$$T_{\tilde{\tau}}(f)(\gamma) = \tilde{\tau}(\gamma)f(\gamma).$$
A {\em twisted almost periodic flex} of $\G_0$  is a twisted almost periodic function  $T_{\tilde{\tau}}h\in \ell^\infty(\Gamma,X^{V_0})$ which is also a bounded infinitesimal flex of $\G_0$. 
We denote by $\T(\G_0)$ the Banach space of twisted almost periodic flexes of $\G_0$.
It follows from Theorem \ref{t:transpace} that every translation of $\G_0$ is a twisted almost periodic flex of $\G_0$.
Note  that $\chi$-symmetric flexes of $\G_0$ are also  twisted almost periodic flexes of $\G_0$.

We now prove the two main results in this section.

\begin{Theorem}
\label{t:bohr}
Let $\G_0=(G_0,m,\varphi,\tau)$ be a $\Gamma$-gain framework. Then,
\[\Omega(\G_0) = \bigcup_{g\in\T(\G_0)} \Lambda(T_{\tilde{\tau}}^{-1}g).\]
Moreover, every $\chi$-symmetric vector of the form $z(\chi,\hat{h}(\chi))$, where $T_{\tilde{\tau}}h\in\T(\G_0)$, is a bounded infinitesimal flex of $\G_0$.
\end{Theorem}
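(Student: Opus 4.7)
The plan is to leverage Lemma~\ref{L3} applied to the gain framework operator $\tilde{C}(\G_0)$, together with the orbit matrix identity of Proposition~\ref{p:orbit}. By Proposition~\ref{p:intertwine}, $\tilde{C}(\G_0)$ intertwines the representations $\pi_{X^{V_0},\tilde{\tau}}$ and $\pi_{Y^{E_0}}$, so Lemma~\ref{L3} (applied with the Hilbert spaces $X^{V_0}$, $Y^{E_0}$ and the unitary representation $\tilde{\tau}$) yields, for each $h\in AP(\Gamma,X^{V_0})$, the equivalence
\[
\tilde{C}(\G_0)(T_{\tilde{\tau}}h)=0\quad\Longleftrightarrow\quad \tilde{C}(\G_0)T_{\tilde{\tau}}(\chi\otimes\hat{h}(\chi))=0 \text{ for every }\chi\in\hat{\Gamma}.
\]
Combining this with Proposition~\ref{p:orbit}, which states that $\tilde{C}(\G_0)T_{\tilde{\tau}}(\chi\otimes a)(\gamma)=\chi(\gamma)O_{\G_0}(\chi)a$, we obtain the working criterion: $T_{\tilde{\tau}}h\in\T(\G_0)$ if and only if $O_{\G_0}(\chi)\hat{h}(\chi)=0$ for every $\chi\in\hat{\Gamma}$.

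For the set equality, I would prove the two inclusions separately. For $(\supseteq)$, suppose $g=T_{\tilde{\tau}}h\in\T(\G_0)$ and $\chi\in\Lambda(T_{\tilde{\tau}}^{-1}g)=\Lambda(h)$; then $\hat{h}(\chi)\neq 0$ is, by the criterion above, a nonzero vector of $\ker O_{\G_0}(\chi)$, so $\chi\in\Omega(\G_0)$. For $(\subseteq)$, let $\chi\in\Omega(\G_0)$ and choose a nonzero $a\in\ker O_{\G_0}(\chi)$. Set $h=\chi\otimes a\in AP(\Gamma,X^{V_0})$; from the Fourier coefficient calculation preceding Lemma~\ref{L0}, $\widehat{h}(\chi)=a$ and $\widehat{h}(\chi')=0$ for $\chi'\neq\chi$, so the criterion is satisfied vacuously at every $\chi'\neq\chi$ and by construction at $\chi$. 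Hence $g:=T_{\tilde{\tau}}h\in\T(\G_0)$, and since $\chi\in\Lambda(h)=\Lambda(T_{\tilde{\tau}}^{-1}g)$, the reverse inclusion follows.

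The moreover part is immediate from the same criterion: since $z(\chi,\hat{h}(\chi))=T_{\tilde{\tau}}(\chi\otimes\hat{h}(\chi))$ by the definition in Section~\ref{chisymvec}, the equivalence displayed above forces $\tilde{C}(\G_0)z(\chi,\hat{h}(\chi))=0$ whenever $T_{\tilde{\tau}}h\in\T(\G_0)$, i.e.\ each such $z(\chi,\hat{h}(\chi))$ is a bounded infinitesimal flex. No serious obstacle remains here; the substantive work was packaged into Lemma~\ref{L3}, whose role is precisely to transfer annihilation of a twisted almost-periodic flex by an intertwining operator to annihilation of each of its Bohr--Fourier components. The only minor care needed is to read $\Lambda(T_{\tilde{\tau}}^{-1}g)$ correctly (the Bohr--Fourier spectrum is a property of the untwisted almost-periodic function $h$, not of $g$ itself).
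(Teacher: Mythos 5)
Your proposal is correct and follows essentially the same route as the paper: both inclusions and the ``moreover'' statement are obtained from Proposition \ref{p:intertwine}, Lemma \ref{L3} and Proposition \ref{p:orbit}, with the easy inclusion handled by the elementary Fourier coefficient computation for $\chi\otimes a$ (which the paper packages as Lemma \ref{l:kernel}). Your reformulation of these ingredients as a single membership criterion for $\T(\G_0)$ is only a cosmetic reorganisation of the paper's argument.
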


\proof
Suppose $\chi\in \Omega(\G_0)$, so there exists non-zero $a\in \ker O_{\G_0}(\chi)$. 
Note that $\chi$ lies in the Bohr-Fourier spectrum $\Lambda(\chi\otimes a)$.
By Lemma \ref{l:kernel}, $z(\chi,a)=T_{\tilde{\tau}}(\chi\otimes a)\in \T(\G_0)$. 

For the reverse inclusion, let $h=T_{\tilde{\tau}}^{-1}g$ for some $g\in \T(\G_0)$ and suppose $\chi\in\Lambda(h)$.
Then $\tilde{C}(\G_0)T_{\tilde{\tau}}h=0$.
By Proposition \ref{p:intertwine}, $\tilde{C}(\G_0)$ is an intertwiner for the representations $\pi_{X^{V_0},{\tilde{\tau}}}$ and $\pi_{Y^{E_0}}$.
Thus, by Lemma \ref{L3}, $z(\chi,\hat{h}(\chi))=T_{\tilde{\tau}}(\chi\otimes \hat{h}(\chi))\in \ker\tilde{C}(\G_0)$. By Proposition \ref{p:orbit}, $\hat{h}(\chi)\in \ker O_{\G_0}(\chi)$ where $\hat{h}(\chi)\not=0$. Hence, $\chi \in \Omega(\G_0)$.
\endproof

\begin{Theorem}
\label{t:apr}
Let $\mathcal{G}_0=(G_0,m,\varphi,\tau)$ be a $\Gamma$-gain framework.
The following statements are equivalent.
\begin{enumerate}[(i)]
\item
Every twisted almost periodic flex of $\G_0$ is a translation of $\G_0$.
\item  $\Omega(\G_0)=\Omega_{js}(\tau)$ and every non-zero $\chi$-symmetric flex of $\G_0$ is a translation of $\G_0$. 
\end{enumerate}
\end{Theorem}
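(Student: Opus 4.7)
The plan is to bridge twisted almost periodic flexes $T_{\tilde{\tau}}h$ and their individual Bohr--Fourier components $z(\chi,\hat h(\chi))$ using Lemma \ref{L3}, and then recover the full flex from its components via the Bochner--Fejer approximation of Theorem \ref{CesApprox}. The two directions are argued separately, with all nontrivial content concentrated in (ii)$\Rightarrow$(i).

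For (i)$\Rightarrow$(ii): Every $\chi$-symmetric flex is the twisted almost periodic flex $T_{\tilde{\tau}}(\chi\otimes a)$, so (i) yields the second assertion immediately. Since $\Omega_{js}(\tau)\subseteq \Omega(\G_0)$ by Theorem \ref{t:eigen}, only the reverse inclusion requires work. Given $\chi\in\Omega(\G_0)$, Lemma \ref{l:kernel} supplies a nonzero $\chi$-symmetric flex $z(\chi,a)$, which by (i) is a translation $\gamma\mapsto [b\cdots b]^{\intercal}$. Evaluating at $\gamma=0$ forces $a=[b\cdots b]^{\intercal}$, and evaluating at arbitrary $\gamma$ yields $d\tau(\gamma)b=\overline{\chi(\gamma)}b$. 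Thus $b$ is a joint eigenvector for every generating tuple $T=(d\tau(\gamma_1),\ldots,d\tau(\gamma_n))$ of $d\tau(\Gamma)$ with joint eigenvalue $\lambda=(\overline{\chi(\gamma_1)},\ldots,\overline{\chi(\gamma_n)})\in\sigma(T)$; unwinding $\chi_\lambda=\psi_\lambda\circ d\tau$ on the generators (and extending multiplicatively, as in the proof of Lemma \ref{lem:char}) gives $\bar\chi_\lambda=\chi$, hence $\chi\in\Omega_{js}(\tau)$.

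For (ii)$\Rightarrow$(i): Let $g=T_{\tilde{\tau}}h\in\T(\G_0)$, so $\tilde C(\G_0)g=0$ with $h\in AP(\Gamma,X^{V_0})$. By Proposition \ref{p:intertwine}, $\tilde C(\G_0)$ intertwines $\pi_{X^{V_0},\tilde{\tau}}$ and $\pi_{Y^{E_0}}$, so Lemma \ref{L3} yields $\tilde C(\G_0)T_{\tilde{\tau}}(\chi\otimes\hat h(\chi))=0$ for every $\chi\in\hat\Gamma$. Equivalently, each $z(\chi,\hat h(\chi))$ is a $\chi$-symmetric flex of $\G_0$; by hypothesis each such vector is either zero or a translation. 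Writing the Bochner--Fejer kernel as $p_\lambda=\sum_k a_k^{(\lambda)}\chi_k$, one has
\[
T_{\tilde{\tau}}\sigma_\lambda(h)=\sum_k a_k^{(\lambda)}\,T_{\tilde{\tau}}(\chi_k\otimes \hat h(\chi_k))=\sum_k a_k^{(\lambda)}\,z(\chi_k,\hat h(\chi_k)),
\]
a finite linear combination of translations and hence itself a translation. Theorem \ref{CesApprox} gives $\sigma_\lambda(h)\to h$ uniformly, and $T_{\tilde{\tau}}$ is isometric, so $T_{\tilde{\tau}}\sigma_\lambda(h)\to g$ uniformly; as $Z_{\G_0}$ is finite-dimensional it is closed in $\ell^\infty(\Gamma,X^{V_0})$, so $g\in Z_{\G_0}$.

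The main obstacle lies in (ii)$\Rightarrow$(i): Lemma \ref{L3} only constrains each individual Fourier component $z(\chi,\hat h(\chi))$, so one needs a mechanism to reassemble the full twisted flex from these components, and this is exactly what Bochner--Fejer summability combined with the closedness of the finite-dimensional translation space accomplishes. The character identification appearing in (i)$\Rightarrow$(ii) is routine once one observes that the translation hypothesis forces the entire image $d\tau(\Gamma)$ to share the eigenvector $b$ with prescribed eigenvalues.
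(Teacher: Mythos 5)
Your proof is correct and follows essentially the same route as the paper: Lemma \ref{L3} (via Proposition \ref{p:intertwine}) to pass to the individual Fourier components $z(\chi,\hat h(\chi))$, and Bochner--Fej\'er summation (Theorem \ref{CesApprox}) together with closedness of the finite-dimensional translation space to reassemble the full twisted flex. The only cosmetic difference is in (i)$\Rightarrow$(ii), where you verify the joint-eigenvector relation $d\tau(\gamma)b=\overline{\chi(\gamma)}\,b$ directly rather than appealing to Theorem \ref{t:transpace} as the paper does; the content is the same.
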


\proof
$(i)\Rightarrow (ii)$
Every  non-zero $\chi$-symmetric flex of $\G_0$ lies in $\T(\G_0)$, and hence is a translation of $\mathcal{G}_0$ by $(i)$.  If $\chi\in \Omega(\G_0)$ then, by Lemma \ref{l:kernel}, there exists $a$ such that $z(\chi,a)\in\ker \tilde{C}(\G_0)$. By the hypothesis, $z(\chi,a)$ is a translation of $\G_0$. By Theorem \ref{t:transpace}, $z(\chi,a)$ is a joint spectral $\chi$-symmetric vector for $\G_0$. Thus $\chi\in \Omega_{js}(\tau)$.

$(ii)\Rightarrow (i)$
Let $T_{\tilde{\tau}} h$ be a twisted almost periodic flex of $\G_0$. 
By Theorem \ref{t:bohr}, $\Lambda(h)\subseteq\Omega(\G_0)=\Omega_{js}(\tau)$ and 
each  $\chi$-symmetric vector   $z(\chi,\hat{h}(\chi))$ lies in  $\ker \tilde{C}(\G_0)$.
Hence, by the hypothesis, each non-zero $\chi$-symmetric vector   $z(\chi,\hat{h}(\chi))$ is  a translation of $\G_0$. It follows from Theorem \ref{CesApprox} that $T_{\tilde{\tau}}h$ is a translation of $\G_0$ as desired.
\endproof

\begin{Remark}
Periodic bar-and-joint frameworks with full affine span in $\bR^d$ are represented  by  $\Gamma$-gain frameworks where $\Gamma=\bZ^d$  and $\tau(\bZ^d)$ is a translation group. Since $d\tau(\bZ^d)$ is the trivial group, it follows that $\Omega_{js}(\tau)=\{1_{\hat{\Gamma}}\}$. In the Euclidean case, Theorem \ref{t:apr} is an equivalent statement to \cite[Theorem 4]{bkp} and can be regarded as a characterisation of almost periodic rigidity for periodic bar-and-joint frameworks.

In \cite{kit-pow-2}, 
the authors showed that a bar-and-joint framework in $\ell_q^d$, with $q\not=2$, is rigid if and only if every infinitesimal flex is a translation. 
Hence Theorem \ref{t:apr} completely characterises  twisted almost periodic rigidity for frameworks in $\ell_q^d$, for $q\not=2$.
As seen in Examples \ref{Ex:2loopsA}, \ref{Ex:2loopsB} and later in Example \ref{Ex:2loopsAinfty} it is possible for an almost periodic flex to arise from a rotation of the framework. Thus the question of characterising twisted almost periodic rigidity in general is more subtle.

It is currently unknown if the existence of a non-translational bounded infinitesimal flex implies the existence of a non-translational $\chi$-symmetric flex.
\end{Remark}


\section{Further examples}
\label{s:FurtherExamples}
In this section, we apply the theory developed thus far to infinite symmetric bar-and-joint frameworks. In each case, the symmetry group for the bar-and-joint framework is a finitely generated discrete abelian group isomorphic to $\mathbb{Z}\times\mathbb{Z}_2$.
Firstly, we consider a  bounded bar-and-joint framework in $3$-dimensional Euclidean space which exhibits $C_{\infty h}$-symmetry. Here $C_{\infty h}$ is a symmetry group generated by an irrational rotation and a reflection in a plane which is orthogonal to the axis of rotation. We show that the space of bounded infinitesimal flexes is infinite dimensional and that rotations arise as twisted almost periodic flexes. Following this, we present a contrasting example of an unbounded bar-and-joint framework in the $\ell_q$-plane, for any $q\in(1,\infty)$, with $p11m$-symmetry. Here $p11m$ is the frieze group generated by a horizontal translation and reflection in a horizontal line. We show that the RUM spectrum consists of the joint spectral points and exactly one additional point. 


\subsection{$C_{\infty h}$-symmetry}
\label{s:infinityh}
Let $G_0=(V_0,E_0)$ be the directed multigraph with a single vertex $v$ and two loops $e_1,e_2$. 
Let $\Gamma = \mathbb{Z}_2\times\mathbb{Z}$ and define $m:E_0\to\Gamma$ by setting  $m(e_1)=(0,1)$ and $m(e_2)=(1,1)$. 
The covering graph $G=(V,E)$ has vertex set $V=V_0\times\Gamma$. For convenience, we write $v_{0,0} := (v,(0,0))$ and $v_{l,m}:=(v,(l,m))$, for each $(l,m)\in \Gamma$. 
The dual group $\hat{\Gamma}$ is the multiplicative group of characters $\chi_{j,\omega}:\mathbb{Z}_2\times\mathbb{Z}\to\bT$, $\chi_{j,\omega}(l,m)=(-1)^{jl}\omega^{m}$, where $j\in\mathbb{Z}_2$ and $\omega \in \bT$.
   
 Define a group homomorphism $\tau:\Gamma\to \Isom(\mathbb{C}^3)$ by setting $\tau(1,0)$ to be the complexification of orthogonal reflection in the $xy$-plane and $\tau(0,1)$ to be the complexification of a clockwise irrational rotation by $\vartheta\in 2\pi\mathbb{R}\backslash \mathbb{Q}$ about the $z$-axis,
$$\tau(1,0) = \begin{bmatrix}
1 &0&0\\
0&1&0\\
0&0&-1
\end{bmatrix},
\qquad
\tau(0,1) = \begin{bmatrix}
\cos\vartheta&\sin\vartheta & 0\\
-\sin\vartheta&\cos\vartheta & 0\\
0&0&1
\end{bmatrix}.$$

\begin{Example}
    \label{Ex:2loopsinfty}
Let $\varphi=(\varphi_{e_1},\varphi_{e_2})$ be a pair of linear functionals from $\mathbb{C}^3$ to $\mathbb{C}$. The orbit matrix of the corresponding $\Gamma$-gain framework $\G_0=(G_0,m,\varphi,\tau)$ has the following form:
\begin{equation}
\label{Ex:2loopsOrbitinf}
O_{\G_0}(\chi_{j,\omega})
=\begin{bmatrix} 
\varphi_{e_1}\circ(I - \omega \tau(0,1)) \\
\varphi_{e_2}\circ(I - (-1)^j\omega\tau(1,1)) 
\end{bmatrix}
\end{equation}
It follows by the rank-nullity theorem that $\Omega(\G_0)=\hat{\Gamma}$. 
 Thus,  by Corollary \ref{c:infdim}, the  space   of bounded infinitesimal flexes of $\G_0$ is infinite dimensional.
 
Let $\eta=e^{i\vartheta}$ and let $T=(\tau(1,0),\tau(0,1))$. 
Note that $\sigma(T) = \{(-1,1),(1,\eta),(1,\bar{\eta})\}$.
To see this note that $\lambda_1=(-1,1)$ is a joint eigenvalue for $T$ with joint eigenvector $a_{\lambda_1}=(0,0,1)$, $\lambda_2=(1,\eta)$ is a joint eigenvalue for $T$ with joint eigenvector $a_{\lambda_2}=(-i,1,0)$ and $\lambda_3=(1,\bar{\eta})$ is a joint eigenvalue for $T$ with joint eigenvector $a_{\lambda_3}=(i,1,0)$. 
The corresponding characters satisfy, 
$$\bar{\chi}_{\lambda_1}(l,m) = (-1)^l=\chi_{1,1}(l,m),
\quad \bar{\chi}_{\lambda_2}(l,m) = \bar{\eta}^m=\chi_{0,\bar{\eta}}(l,m),
\quad \bar{\chi}_{\lambda_3}(l,m) = \eta^{m}=\chi_{0,\eta}(l,m).$$
 In particular,  the joint spectral points in the RUM spectrum are $\Omega_{js}(\tau)=\{\chi_{1,1},\chi_{0,\bar{\eta}},\chi_{0,\eta}\}$.
 
 By Lemma \ref{l:trivial}, the corresponding joint spectral $\chi$-symmetric vectors for $\G$ are the translations:
 \begin{itemize}
  \item $z(\bar{\chi}_{\lambda_1},a_{\lambda_1})(\gamma)=z(\chi_{1,1},a_{\lambda_1})(\gamma)=a_{\lambda_1}$.
    \item $z(\bar{\chi}_{\lambda_2},a_{\lambda_2})(\gamma)=z(\chi_{0,\bar{\eta}},a_{\lambda_2})(\gamma)=a_{\lambda_2}$.
    \item $z(\bar{\chi}_{\lambda_3},a_{\lambda_3})(\gamma)=z(\chi_{0,\eta},a_{\lambda_3})(\gamma)=a_{\lambda_3}$.
 \end{itemize}
By Theorem \ref{t:transpace}, the corresponding joint spectral $\chi$-symmetric vectors for $\G_0$ generate the $3$-dimensional translation space $Z_{\G_0}$. 
\end{Example}

Let $p:V\to \mathbb{R}^3$ be the assignment,
$$p(v_{0,0}) :=(-\sqrt{3},-1,1),\qquad
p(v_{l,m}) := \tau(l,m)p(v_{0,0}).$$
In particular, we have
\begin{align*}
p(v_{0,1}) &= \tau(0,1)p(v_{0,0})=(-\sqrt{3}\cos\vartheta-\sin\vartheta,\sqrt{3} \sin\vartheta-\cos\vartheta,1),\\ p(v_{1,1}) &= \tau(1,1)p(v_{0,0})=(-\sqrt{3}\cos\vartheta-\sin\vartheta,\sqrt{3} \sin\vartheta-\cos\vartheta,-1).
\end{align*}
Note that the image $p(V)$ of the vertex set of the covering graph $G$ is not discrete; it is dense in the set,
$$\{(x,y,z)\in\mathbb{R}^3:\|(x,y,0)\|=2, \,z=\pm 1\}.$$
In the following example, the linear transformations $\varphi=(\varphi_{e_1},\varphi_{e_2})$ are derived from the assignment $p$ and the Euclidean norm on $\mathbb{R}^3$ in the manner of Example \ref{Ex:frameworkA}.
 
\begin{Example}
\label{Ex:2loopsAinfty}
Let $\G_0=(G_0,m,\varphi,\tau)$ be the $\Gamma$-gain framework 
with the following linear transformations,
\begin{align*}
\varphi_{e_1}(a) &= (p(v_{0,0})-p(v_{0,1}))\cdot a =(-\sqrt{3}(1-\cos\vartheta)+\sin\vartheta,-\sqrt{3} \sin\vartheta-(1-\cos\vartheta),0)\cdot a,\\
\varphi_{e_2}(a) &= (p(v_{0,0})-p(v_{1,1}))\cdot a =(-\sqrt{3}(1-\cos\vartheta)+\sin\vartheta,-\sqrt{3} \sin\vartheta-(1-\cos\vartheta),2)\cdot a,
\end{align*}
for all $a\in \mathbb{C}^3$.  
Using Equation \ref{Ex:2loopsOrbitinf}, the orbit matrices for $\G_0$ take the form,
 \begin{eqnarray*}
 O_{\G_0}(\chi_{j,\omega})  
 =  \left[\begin{smallmatrix} 
 \sqrt{3}(1+\omega)(\cos\vartheta -1)+(1-\omega)\sin\vartheta & &(1+\omega)(\cos\vartheta -1)-\sqrt{3}(1-\omega)\sin\vartheta & &0\\
 \sqrt{3}(1+(-1)^j\omega)(\cos\vartheta -1)+(1-(-1)^j\omega)\sin\vartheta & &
 (1+(-1)^j\omega)(\cos\vartheta -1)-\sqrt{3}(1-(-1)^j\omega)\sin\vartheta & & 2(1+(-1)^j\omega) 
 \end{smallmatrix}\right]
 \end{eqnarray*}
 
 The vector $u_{0,1}=(1,-\sqrt{3},0)$ spans the kernel of $O_{\G_0}(\chi_{0,1})$. The $\chi$-symmetric vector $z(\chi_{0,1},u_{0,1})$ describes an anticlockwise rotation of the covering framework about the $z$-axis.
 
 The vector $u_{1,\eta}=(i,1, 2e^{i\pi/3})$ spans the kernel of $O_{\G_0}(\chi_{1,\eta})$. The components of the $\chi$-symmetric vector $z(\chi_{1,\eta},u_{1,\eta})$ are,  
  $$z_{(l,m)}= \begin{bmatrix} i(-1)^l \\ (-1)^l \\ 2\eta^m e^{i\pi/3} \end{bmatrix},$$
  and so the real and imaginary parts of $z(\chi_{1,\eta},u_{1,\eta})$ are the twisted almost periodic flexes with components,
  $$\Real z_{(l,m)}= \begin{bmatrix} 0 \\ (-1)^l \\ 2\cos(\pi/3+m\vartheta) \end{bmatrix} \qquad \text{ and } \qquad
    \Imag z_{(l,m)}= \begin{bmatrix} (-1)^l \\ 0 \\ 2\sin(\pi/3+m\vartheta) \end{bmatrix}.$$
  The real part of $z(\chi_{1,\eta},u_{1,\eta})$ describes a rotation of the covering framework about the $x$-axis. The imaginary part of  $z(\chi_{1,\eta},u_{1,\eta})$ describes a rotation of the covering framework about the $y$-axis. 
  
 The vector $u_{1,\bar{\eta}}=(-i,1, 2e^{-i\pi/3})$ spans the kernel of $O_{\G_0}(\chi_{1,\bar{\eta}})$. The components of the $\chi$-symmetric vector $z(\chi_{1,\bar{\eta}},u_{1,\bar{\eta}})$ are,
 $$z_{(l,m)}= \begin{bmatrix} i(-1)^{l+1}\\(-1)^l \\ 2\bar{\eta}^me^{-i\pi/3}  \end{bmatrix},$$
 and so $z(\chi_{1,\bar{\eta}},u_{1,\bar{\eta}})$ is the twisted almost periodic flex whose components are the complex conjugates of the components of $z(\chi_{1,\eta},u_{1,\eta})$. Hence the real and  imaginary parts of $z(\chi_{1,\bar{\eta}},u_{1,\bar{\eta}})$ also describe rotations of the covering framework about the $x$-axis and $y$-axis respectively. 
  \end{Example}


\subsection{Frieze group symmetry}
\label{s:frieze}
Let $G_0=(V_0,E_0)$ be the directed multigraph with  a single vertex $v$ and edge set $E_0=\{e_1,e_2\}$ where  $e_1$ and $e_2$ are both loops at $v$.  Let $\Gamma = \mathbb{Z}\times \mathbb{Z}_2$ and define $m:E_0\to\Gamma$ by setting  $m(e_1)=(1,0)$ and $m(e_2)=(1,1)$. 
  The $\Gamma$-gain graph $(G_0, m)$ and its covering graph are illustrated in Figure \ref{fig:diamondlattice}.
   The covering graph $G=(V,E)$ has vertex set $V=V_0\times\Gamma$. For convenience, we write $v_{m,j} := (v,(m,j))$ for each $m\in \mathbb{Z}$ and $j\in\mathbb{Z}_2$. 
   
 Define a group homomorphism $\tau:\Gamma\to \Isom(\mathbb{C}^2)$ by setting,
 \begin{equation*}
\tau(m,j)\begin{bmatrix}a_1\\a_2\end{bmatrix}
=\begin{bmatrix}
1 & 0  \\
0 & (-1)^j 
\end{bmatrix}\begin{bmatrix}a_1\\a_2\end{bmatrix} + \begin{bmatrix}
    m \\0
\end{bmatrix}, \quad
m\in\bZ,\, j\in \bZ_2.
\end{equation*}
The dual group $\hat{\Gamma}$ is the multiplicative group of characters $\chi_{\omega,\iota}:\Gamma\to\mathbb{T}$, $\chi_{\omega,\iota}(m,j)=\omega^{m}\iota^j$, where $\omega\in\mathbb{T}$ and $\iota\in\{-1,1\}$.

\begin{Example}
    \label{Ex:diamondlattice}
Let $\G_0=(G_0,m,\varphi,\tau)$ be a $\Gamma$-gain framework where $\varphi=(\varphi_{e_1},\varphi_{e_2})$ is an arbitrary pair of linear transformations from $\mathbb{C}^2$ to $\mathbb{C}$.
 For $\chi_{\omega,\iota}\in\hat{\Gamma}$ the corresponding orbit matrix is,
\begin{equation}
\label{Ex:dlOrbit}
O_{\G_0}(\chi_{\omega,\iota})
=\kbordermatrix{ &v_0\\
e_1& \varphi_{e_1}\circ(I-\omega d\tau(1,0))  \\  
e_2& \varphi_{e_2}\circ(I-\omega\iota d\tau(1,1))  }
\end{equation}
Let $T=(d\tau(1,0),d\tau(0,1))$ and note that $\sigma(T) = \{(1,1),(1,-1)\}$.
 To see this note that $\lambda_1=(1,1)$ is a joint eigenvalue for $T$ with eigenvector $a_{\lambda_1}=(1,0)$ and $\lambda_2 =(1,-1)$ is a joint eigenvalue for $T$ with   eigenvector $a_{\lambda_2}=(0,1)$. 
 The corresponding characters satisfy, 
 $$\bar{\chi}_{\lambda_1}(m,j) = 1=\chi_{1,1}(m,j),\quad\quad \bar{\chi}_{\lambda_2}(m,j) = (-1)^j=\chi_{1,-1}(m,j).$$ 
 In particular,  the RUM spectrum  $\Omega(\G_0)$ contains the joint spectral points $\Omega_{js}(\tau)=\{\chi_{1,1},\chi_{1,-1}\}$.
 
 By Lemma \ref{l:trivial}, the corresponding joint spectral $\chi$-symmetric vectors for $\G$ are the translations:
 \begin{itemize}
  \item $z(\bar{\chi}_{\lambda_1},a_{\lambda_1})(\gamma)=z(\chi_{1,1},a_{\lambda_1})(\gamma)=a_{\lambda_1}$.
    \item $z(\bar{\chi}_{\lambda_2},a_{\lambda_2})(\gamma)=z(\chi_{1,-1},a_{\lambda_2})(\gamma)=a_{\lambda_2}$.
 \end{itemize}
 As shown in Theorem \ref{t:transpace}, the above joint spectral $\chi$-symmetric vectors for $\G_0$ span the $2$-dimensional translation space $Z_{\G_0}$.
\end{Example}

For the linear constraints, we consider  linear functionals that arise from an $\ell_q$-norm on $\mathbb{R}^2$, where $q\in(1,\infty)$. For more details, see \cite{kit-pow-2}.
Let $p:V\to \mathbb{R}^2$ be the assignment given by,
\[p_{m,j}:=p(v_{m,j})=\begin{bmatrix}m\\(-1)^{j+1}\end{bmatrix},\quad \forall\,m\in \bZ,\,j\in\{0,1\}.\] 
See the right hand image in Figure \ref{fig:diamondlattice} for an illustration.
Let us write $d_{e_1}=p(v_{0,0})-\tau(1,0)p(v_{0,0})$ and
$d_{e_2}=p(v_{0,0})-\tau(1,1)p(v_{0,0})$.
For $a=(a_1,a_2)\in\mathbb{R}^2$ we write,
$$a^{(k)} = (\sgn(a_1)|a_1|^k,\, \sgn(a_2)|a_2|^k).$$

\begin{Example}
 \label{Ex:diamondlatticeA}
Let $\G_0=(G_0,m,\varphi,\tau)$ be the $\Gamma$-gain framework 
with linear transformations $\varphi=(\varphi_{e_1},\varphi_{e_2})$ given by,
\begin{align*}
\varphi_{e_1}(a) &=\|d_{e_1}\|_q^{1-q}d_{e_1}^{(q-1)}\cdot a =(-1, 0)\cdot a,\\
\varphi_{e_2}(a) &= \|d_{e_2}\|_q^{1-q}d_{e_2}^{(q-1)}\cdot a =(1+2^q)^{\frac{1-q}{q}}(-1, -2^{q-1})\cdot a,
\end{align*}
for all $a\in \mathbb{C}^2$. The linear functionals $\varphi=(\varphi_{e_1},\varphi_{e_2})$ are derived from the $\ell_q$-distance constraints on the covering framework. 
The orbit matrices have the following form,
\begin{eqnarray}
\label{Ex:diamondp}
O_{\G_0}(\chi_{\omega,\iota})
&=& \begin{bmatrix}
-(1-\omega)& 0  \\  
-(1+2^q)^{\frac{1-q}{q}}(1-\omega \iota)& -(1+2^q)^{\frac{1-q}{q}}2^{q-1}(\omega \iota+1) 
\end{bmatrix}
\end{eqnarray}

Note that the RUM spectrum $\Omega(\G_0)$ consists of the joint spectral $\chi$-symmetric vectors $\chi_{1,1}$ and $\chi_{1,-1}$ (as shown in Example \ref{Ex:diamondlattice}), and exactly one additional point $\chi_{-1,1}$. It is trivial to see that the kernels of $O_{\G_0}(\chi_{1,1})$ and $O_{\G_0}(\chi_{1,-1})$ are spanned by the joint eigenvectors for the joint eigenvalues $\lambda_1$ and $\lambda_2$ respectively, given in Example \ref{Ex:diamondlattice}.

The vector $u_{-1,1}=(0,1)$ spans the kernel of $O_{\G_0}(\chi_{-1,1})$. The $\chi$-symmetric vector $z(\chi_{-1,1},u_{-1,1})$ is given by,
  $$z_{(m,j)}= \begin{bmatrix} 0\\ (-1)^{m+j}  \end{bmatrix}, \, m\in \bZ, j\in\{-1,1\}.$$
The $\chi$-symmetric vector $z(\chi_{-1,1},u_{-1,1})$ describes an alternating periodic motion of the covering framework.
By Theorem \ref{t:bohr}, it follows that the space of twisted almost periodic flexes of $\G_0$ has dimension $3$. 
\end{Example}

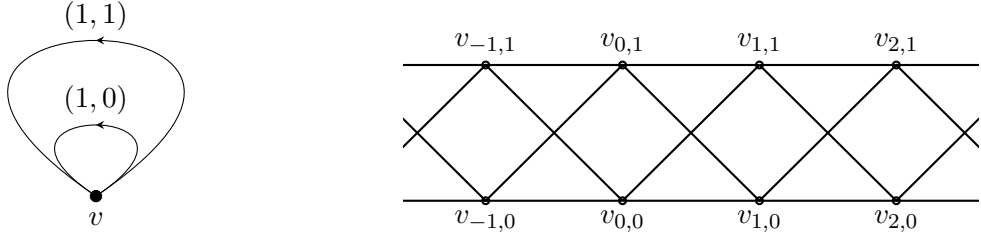
\begin{figure}[h!]
\centering
\hspace{-20mm}
\begin{tikzpicture}[scale=0.6]
    \node[circle, draw, fill=black, inner sep=1.5pt, label=below:$v$] (v) at (0,2) {};
     \draw[postaction={decorate,decoration={markings,mark=at position 0.5 with {\arrow{stealth}}}}]
        (v) to[out=35, in=145, looseness=90] node[midway, above] {$(1,1)$} (v);
    \draw[postaction={decorate,decoration={markings,mark=at position 0.5 with {\arrow{stealth}}}}] 
        (v) to[out=35, in=145, looseness=40] node[midway, above] {$(1,0)$} (v);
\end{tikzpicture}
\begin{tikzpicture}[scale=0.9]
\clip (-3.2,-0.5) rectangle (5.2cm, 4cm);

\draw [thick](-7,0)--(-0,0);
\draw [thick](2,0)--(7,0);
\draw [thick](-7,2)--(2,2);
\draw [thick](2,2)--(7,2);
\draw [thick](-7,1)--(-6,2)--(-4,0)--(-2,2)--(0,0);
\draw [thick](2,2)--(4,0)--(6,2)--(7,1);
\draw [thick](-7,1)--(-6,0)--(-4,2)--(-2,0)--(0,2)--(2,0)--(4,2)--(6,0)--(7,1);
\draw [thick] (-6,0) circle [radius=0.05];
\draw [thick] (-4,0) circle [radius=0.05];
\draw [thick] (-2,0) circle [radius=0.05];
\draw [thick](0,0)--(2,2);
\draw [thick](0,0)--(2,0);
\draw [thick] (0,0) circle [radius=0.05];
\draw [thick] (2,0) circle [radius=0.05];
\draw [thick] (4,0) circle [radius=0.05];
\draw [thick] (6,0) circle [radius=0.05];
\draw [thick] (-6,2) circle [radius=0.05];
\draw [thick] (-4,2) circle [radius=0.05];
\draw [thick] (-2,2) circle [radius=0.05];
\draw [thick] (0,2) circle [radius=0.05];
\draw [thick] (2,2) circle [radius=0.05];
\draw [thick] (4,2) circle [radius=0.05];
\draw [thick] (6,2) circle [radius=0.05];
\node [below] at (0,0) {$v_{0,0}$};
\node [below] at (2,0) {$v_{1,0}$};
\node [below] at (4,0) {$v_{2,0}$};
\node [below] at (-2,0) {$v_{-1,0}$};
\node [above] at (0,2) {$v_{0,1}$};
\node [above] at (2,2) {$v_{1,1}$};
\node [above] at (4,2) {$v_{2,1}$};
\node [above] at (-2,2) {$v_{-1,1}$};
\end{tikzpicture}
\caption{The  gain graph (left) and  covering graph (right) in Section \ref{s:frieze}.}\label{fig:diamondlattice}
\end{figure}

\begin{Example}
Let us consider now the directed multigraph graph $G_1=G_0+e_3$, where $e_3$ is again a loop at $v$. We also extend the gain function $m$ above, setting $m(e_3)=(2,1)$. Let  $d_{e_3}=p(v_{0,0})-\tau(2,1)p(v_{0,0})$ and define,
$$\varphi_{e_3}(a) = \|d_{e_3}\|_q^{1-q}d_{e_3}^{(q-1)}\cdot a =2^{\frac{1}{q}-1}(-1, -1)\cdot a.$$
Note that the extra edge admits a third row for the orbit matrix of $\G_1=(G_1,m,\varphi,\tau)$,  
\begin{equation*}
\label{ex:third_row}
O_{\G_1}(\chi_{\omega,\iota})(e_3)
=\begin{bmatrix}
    \varphi_{e_3}\circ(I-\omega^2\iota d\tau(2,1))
\end{bmatrix} 
\end{equation*}
Hence the orbit matrices for $\G_1$ take the form,
\begin{eqnarray*}
\label{Ex:diamondppluse3}
O_{\G_1}(\chi_{\omega,\iota})
&=& \begin{bmatrix}-(1-\omega)& 0  \\  
-(1+2^q)^{\frac{1-q}{q}}(1-\omega \iota)& -(1+2^q)^{\frac{1-q}{q}}2^{q-1}(\omega \iota+1) \\
-2^{\frac{1}{q}-1}(1-\omega^2 \iota)& -2^{\frac{1}{q}-1}(\omega^2 \iota+1)\\
\end{bmatrix}
\end{eqnarray*}
It is evident that every vector in $\ker(O_{\G_1}(\chi_{\omega,\iota}))$ lies also in $\ker(O_{\G_0}(\chi_{\omega,\iota}))$, and that the two $\Gamma$-gain frameworks share the same joint spectral $\chi$-symmetric vectors. However, one can see that
$\Omega(\G_1)=\Omega_{js}(\tau)$. Moreover, every non-zero $\chi$-symmetric flex for $\G_1$ is a joint spectral $\chi$-symmetric vector for $\G_1$. It follows by Theorem \ref{t:apr} that every almost periodic flex of $\G_1$ is a translation. 
\end{Example}


\end{document}